\newtheorem{theorem}{Theorem}[section]
\newtheorem*{theorem*}{Theorem}
\newtheorem{corollary}[theorem]{Corollary}
\newtheorem*{corollary*}{Corollary}
\newtheorem{proposition}[theorem]{Proposition}
\newtheorem{lemma}[theorem]{Lemma}
\theoremstyle{definition}
\newtheorem{definition}[theorem]{Definition}
\def\includeappendix{true}
\begin{document}

\dtitle[Stationary Actions of Lattices]{Stabilizers of Stationary Actions of Lattices in Semisimple Groups}
\dauthor[Darren Creutz]{Darren Creutz}{creutz@usna.edu}{U.S.~Naval~Academy}{Supported in part by the Naval Academy Research Council}
\datewritten{30 November 2020}
\subjclass{Primary 22E40 Secondary 22F10}

\dabstract{%
Every stationary action of a strongly irreducible lattice or commensurator of such a lattice in a general semisimple group, with at least one higher-rank connected factor, either has finite stabilizers almost surely or finite index stabilizers almost surely.  Consequently, every minimal action of such a lattice on an infinite compact metric space is topologically free.
}

\makepreprint
\Large

\section*{Introduction}

Nevo-Stuck-Zimmer \cite{SZ}, \cite{nevozimmerpreprint}, \cite{NZ} proved that every ergodic measure-preserving action of a lattice in a connected semisimple Lie group, all of whose simple factors are higher-rank, either has finite stabilizers almost everywhere or finite index stabilizers almost everywhere.

The author and Peterson \cite{CP17} extended this to lattices in general semisimple groups, and the author \cite{Cr17} relaxed the higher-rank requirement to a single factor.

However, for nonamenable groups such as lattices, measure-preserving actions is not the correct context as actions on compact metric spaces will not, in general, admit invariant measures.

The natural setting for the study of actions of lattices is that of stationary actions (as stationary measures do always exist).  There is a natural measure on a lattice (related to the Poisson boundary of the ambient group) and ww say an action of a lattice is stationary when it is stationary with respect to that measure (see Definition \ref{D:statactlattice}).  Corollary \ref{C:generallattices} states:

\begin{theorem*}
Let $\Gamma < G$ be a strongly irreducible lattice in a semisimple group with finite center and no compact factors, at least one simple factor of higher-rank.  (Strong irreducibility meaning the projection of the lattice is dense in every proper subproduct of $G$).

Every ergodic stationary action of $\Gamma$ has finite stabilizers almost everywhere or finite index stabilizers almost everywhere.
\end{theorem*}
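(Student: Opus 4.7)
The plan is to recast the stabilizer problem as a classification question for stationary random subgroups (SRS), lift from $\Gamma$ to the ambient group $G$ by induction, and then apply boundary rigidity coming from the higher-rank factor. First, the Borel map $\sigma\colon X\to\mathrm{Sub}(\Gamma)$, $\sigma(x)=\mathrm{Stab}_\Gamma(x)$, is $\Gamma$-equivariant for the conjugation action and pushes the stationary measure $\nu$ forward to an ergodic $\mu$-stationary probability measure on $\mathrm{Sub}(\Gamma)$, where $\mu$ is the natural lattice measure of the statement. It suffices to show every such SRS of $\Gamma$ is concentrated on the union of finite subgroups and finite-index subgroups.

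Next I would induce the $\Gamma$-action to a $\mu_G$-stationary $G$-action, where $\mu_G$ is a spread-out probability measure on $G$ whose Poisson boundary is the full Furstenberg boundary $B$ of $G$; this is the induction construction used in the measure-preserving case by Creutz--Peterson and Creutz, adapted to the stationary setting. Under this induction the pushforward SRS of $\Gamma$ corresponds to a $\mu_G$-stationary measure on $\mathrm{Sub}(G)$, and $\mathrm{Stab}_G$ of the induced point, intersected with $\Gamma$ and conjugated, recovers $\mathrm{Stab}_\Gamma$ of the original point. The problem thus becomes the classification of ergodic $\mu_G$-stationary measures on $\mathrm{Sub}(G)$.

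For the higher-rank simple factor $G_0\leq G$, I would join the SRS space $Y\subseteq\mathrm{Sub}(G)$ with $B$ and apply a stationary-action version of the Nevo--Zimmer intermediate factor theorem, showing that any $G$-factor of $B\times Y$ lying over $B$ splits as $B\times Y'$ for some factor $Y'$ of $Y$. Applied to the contraction given by $G_0$ acting on $B$, this forces the $G_0$-orbits on $Y$ to be essentially trivial, so $Y$ is supported on closed subgroups of $G$ normalized by $G_0$. Structure theory of semisimple groups then implies such a subgroup either lies in the centralizer of $G_0$ or contains $G_0$, and strong irreducibility of $\Gamma$ excludes every intermediate possibility: $H\cap\Gamma$ is either contained in the finite center (hence finite) or $H$ has finite covolume and $H\cap\Gamma$ has finite index in $\Gamma$. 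Transporting back through the induction gives the stated dichotomy.

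The main obstacle is establishing an intermediate factor theorem strong enough for the stationary setting. In the measure-preserving case the $G$-invariance of the measure on $X$ produces the product decomposition essentially formally, but a genuinely stationary action only carries a $\mu_G$-stationary measure, so the decomposition must be obtained by analyzing $\mu_G$-boundaries and their joinings with $B$, and controlling the contraction of the random walk along $G_0$ while the (possibly rank-one or totally disconnected) complementary factors of $G$ contribute no Howe--Moore mixing. Once this is in place, the algebraic and measure-theoretic consequences above should follow in a standard fashion.
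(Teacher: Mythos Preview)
Your plan has a real gap at the structure-theory step. Suppose you do obtain that the induced stationary random subgroup of $G$ is supported on closed subgroups $H$ normalized by the higher-rank factor $G_0$. The dichotomy you assert does not follow: a closed $H$ normalized by a simple factor satisfies either $G_0\subseteq H$ or $H\cap G_0\subseteq Z(G_0)$, but in the first case $H=G_0\times H'$ for an \emph{arbitrary} closed $H'$ in the complementary product $G'=\prod_{j\neq 0}G_j$. There is no reason $H$ should have finite covolume, and $H\cap\Gamma$ need be neither finite nor of finite index. Strong irreducibility only tells you that $\Gamma$ meets $G_0\times\{e\}$ trivially; it says nothing about how $\Gamma$ meets $G_0\times H'$ for general $H'$. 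So after the IFT you are left with an unconstrained stationary random subgroup of $G'$, and $G'$ may have no higher-rank factor to iterate the argument. The paper devotes most of its effort to exactly this residual problem: it projects the stabilizer subgroup to each simple factor $G_j$ separately (Theorem~\ref{T:projectinglatt}), then uses Howe--Moore plus a Lie-dimension descent on diagonal powers (Lemma~\ref{L:HM}, Lemma~\ref{L:dim}, Proposition~\ref{P:cpt}) to force the projected $G_j$-stabilizers to be compact or discrete, and finally argues that a stationary random subgroup of $G_j$ supported inside the countable dense image of $\Gamma$ must be trivial. Only once the stabilizers are shown to project densely to \emph{every} factor does the factor theorem enter, to yield weak amenability and then finite-index stabilizers.

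A second issue: the intermediate factor theorem you invoke is not the one the paper proves. You want a stationary Nevo--Zimmer theorem for a single higher-rank simple factor acting on a $(G,\mu)$-space; the paper instead establishes a product-group IFT (Theorem~\ref{T:factorprod}), based on the Bader--Shalom invariant-products functor and the relatively-contractive/relatively-measure-preserving dichotomy, which requires at least two factors. The single simple higher-rank case is explicitly not handled by the paper's dynamical methods and is deferred to Boutonnet--Houdayer. Moreover, even a Nevo--Zimmer-style IFT would classify intermediate factors between $B\curlyvee Y$ and $Y$; it does not by itself force the $G_0$-action on $Y$ to be trivial, so the passage from ``IFT holds'' to ``$G_0$-orbits on the SRS space are trivial'' would need its own argument.
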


We also obtain the same dichotomy for dense commensurators of lattices (Corollary \ref{C:commensurators}).

As every action on a compact metric space admits a stationary measure we obtain: every action of $\Gamma$ on a minimal compact metric space is either topologically free or the space is finite.  Consequently, every ergodic uniformly recurrent subgroup is finite (see \cite{GW14} for the notion of uniformly recurrent subgroups), answering the generalized form of the question posed in \cite{GW14} as Problem 5.4.

Omitting the requirement of a higher-rank factor but requiring at least two factors, every stationary action either has finite stabilizers or is orbit equivalent to an action of $\mathbb{Z}$.

\subsection*{Stationary Intermediate Factor Theorems and Induced Actions}

We establish an intermediate factor theorem along the lines of those of Stuck-Zimmer \cite{SZ} for stationary actions (Theorem \ref{T:factorprod}): every intermediate factor between the stationary space and the stationary join of the space with the Poisson boundary (which is the product space in the measure-preserving case) must have certain structure.  We generalize the induced action \cite{SZ} to the stationary setting (Definition \ref{D:statind}) to apply the factor theorem to actions of lattices.\ifthenelse{\equal{\includeappendix}{true}}{Similarly, we generalize the intermediate factor theorem for commensurators of \cite{CP17} to the stationary case: Theorem \ref{T:iftdense}.}{\relax}

\subsection*{Projecting Actions to the Ambient Group}

The other main tool we introduce is a technique for projecting the action of a lattice or commensurator to the ambient group: Theorems \ref{T:projectinglatt} and \ref{T:projectingcomm}.  These techniques give a very general method for relating the stabilizers of the action of a lattice or commensurator to the stabilizers of an action of the ambient group (on some other space).  The technique is summarized as: if $\Lambda$ is a dense commensurator of a lattice in a locally compact second countable group $G$ and $\eta$ is a stationary random subgroup of $\Lambda$ then there exists a stationary random subgroup of $G$ via the closure map on subgroups, see Corollary \ref{C:projectingSRS}.

\subsection*{Related Work}

Boutonnet and Houdayer \cite{BH19} proved an operator-algebraic statement that all stationary characters on a lattice in a connected semisimple group, all simple factors of higher-rank, are trivial or arise from finite-index subrepresentations; in particular their result implies that stationary actions of such lattices have finite stabilizers or finite index stabilizers.
Our result covers a much larger class of lattices, including those where only one simple factor is higher-rank and those with $p$-adic components.

Extremely recently, \cite{BBHP20}, Bader, Boutonnet, Houdayer and Peterson, independently obtained the result on stationary actions of general lattices as a consequence of an incredibly strong operator-algebraic rigidity statement for lattices.
Our result on stationary actions of lattices in products of groups, announced prior to their work (e.g.~\cite{vanderbilttalk}), relies on dynamical methods very different from their algebraic methods.

\subsection*{Acknowledgments}

The author would like to thank R.~Boutonnet for numerous helpful discussions, and particularly for pointing out some flaws in early drafts.

\section{Lattices, Commensurators and Stationary Actions}

Let $G$ be a locally compact second countable group and $\mu \in P(G)$ an admissible probability measure (nonsingular with respect to Haar measure and with support generating the group).  $(G,\mu) \actson (X,\nu)$ means a quasi-invariant action on a probability space with $\mu * \nu = \nu$.

\subsection{Stationary Actions of Lattices}

\begin{definition}\label{D:strirr}
Let $\Gamma < G = G_{1} \times \cdots \times G_{k}$ be a lattice in a product of locally compact second countable groups.

$\Gamma$ is strongly irreducible when for every proper subproduct $G_{0}$, the projection map $\mathrm{proj}_{G_{0}} : G \to G_{0}$ has the properties that the image of $\Gamma$ is dense and the map is faithful on $\Gamma$.
\end{definition}

\begin{definition}\label{D:statactlattice}
Let $\Gamma < G = G_{1} \times \cdots \times G_{k}$ be a lattice in a product of locally compact second countable groups.

Let $\kappa = \kappa_{1} \times \cdots \times \kappa_{k}$ be an admissible probability measure on $G$.  When $G$ is a semisimple group, we take $\kappa_{j}$ to be $K_{j}$-invariant where $K_{j}$ is a maximal compact subgroup if $G_{j}$ is connected and $K_{j}$ is a compact open subgroup if $G_{j}$ is totally disconnected.

Let $\mu$ be the admissible probability measure on $\Gamma$ such that the $\Gamma$ action on the $(G,\kappa)$-Poisson boundary is $\mu$-stationary \cite{Ma91}.

A \textbf{stationary action} of $\Gamma$ is an action which is $\mu$-stationary for some $\mu$ coming from a product measure $\kappa$ on $G$.
\end{definition}

\subsection{Stationary Actions of Commensurators}

\begin{definition}
Let $\Gamma < G$ be a lattice in a locally compact second countable group.  A countable group $\Lambda < G$ is a \textbf{commensurator} of $\Gamma$ when $\Gamma \subseteq \Lambda$ and for all $\lambda \in \Lambda$ the group $\Gamma \cap \lambda \Gamma \lambda^{-1}$ has finite index in both $\Gamma$ and $\lambda \Gamma \lambda^{-1}$.
\end{definition}

We first establish that dense commensurators are, in a suitable sense, lattices in their own right:

\begin{theorem}\label{T:cl}
Let $\Gamma < G$ be a strongly irreducible lattice in a semisimple group $G$ with trivial center of higher-rank (meaning $G$ is either a simple higher-rank connected Lie group or $G$ is a product of at least two simple factors).

Let $\Lambda$ be a dense commensurator of $\Gamma$.

Then there exists a locally compact totally disconnected nondiscrete group $H$ such that $\Lambda$ can be embedded as a strongly irreducible lattice in $G \times H$.
\end{theorem}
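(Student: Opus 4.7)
The plan is to realize $H$ as the Schlichting (relative profinite) completion of $\Lambda$ with respect to $\Gamma$. Let $\rho \colon \Lambda \to \mathrm{Sym}(\Lambda/\Gamma)$ be the left-translation action on the coset space, and set $H = \overline{\rho(\Lambda)}$ in the permutation (pointwise-convergence) topology. The commensurability hypothesis forces each $\Gamma$-orbit on $\Lambda/\Gamma$ to have size $[\Gamma : \Gamma \cap \lambda\Gamma\lambda^{-1}] < \infty$, so the point stabiliser $K = \mathrm{Stab}_{H}(\Gamma)$ is compact; since $K$ is open by definition of the topology, $H$ is locally compact, totally disconnected and second countable, with $\rho(\Lambda)$ dense and $\overline{\rho(\Gamma)} = K$.

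With $H$ in hand, I would consider the map $\iota \colon \Lambda \to G \times H$ defined by $\iota(\lambda) = (\lambda, \rho(\lambda))$, injective since $\Lambda \hookrightarrow G$. Discreteness of $\iota(\Lambda)$ follows immediately: if $\iota(\lambda_{n}) \to (e,e)$ then eventually $\rho(\lambda_{n}) \in K$, which forces $\lambda_{n}\Gamma = \Gamma$ and hence $\lambda_{n} \in \Gamma$, after which discreteness of $\Gamma$ in $G$ yields $\lambda_{n} = e$. For finite covolume, density of $\rho(\Lambda)$ together with openness of $K$ gives $\rho(\Lambda)K = H$, so if $F \subset G$ is a fundamental domain for $\Gamma \backslash G$ then $F \times K$ descends to a fundamental domain for $\iota(\Lambda) \backslash (G \times H)$, of finite Haar measure. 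Strong irreducibility of the embedding then reduces to checking denseness and faithfulness of the two projections: denseness holds by hypothesis for $G$ and by construction for $H$; faithfulness is automatic for the $G$-projection, and for the $H$-projection is equivalent to triviality of the core $N := \bigcap_{\lambda \in \Lambda} \lambda \Gamma \lambda^{-1}$.

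The main obstacle, where the structural hypotheses on $G$ must be used, is establishing both $N = \{e\}$ and nondiscreteness of $H$. The subgroup $N$ is normal in $\Lambda$, and density of $\Lambda$ in $G$ promotes this to $G$-normality of $\overline{N}$; since $N \subset \Gamma$ is discrete, $\overline{N} = N$ is a discrete normal subgroup of the semisimple, centreless, no-compact-factor group $G$. In either branch of the hypothesis --- $G$ a higher-rank simple connected Lie group, or $G$ a product of at least two simple factors --- this forces $N = \{e\}$ by a normal-subgroup argument applied factor by factor, using triviality of the centre of each simple factor. Nondiscreteness of $H$ is then clean: were $H$ discrete, $K$ would be finite, making $[\Lambda:\Gamma]$ finite and hence $\Lambda$ a finite union of cosets of the discrete lattice $\Gamma$, thus itself discrete --- contradicting density of $\Lambda$ in the nondiscrete group $G$.
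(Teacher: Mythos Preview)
Your overall architecture matches the paper's: build $H$ as the Schlichting completion, embed $\Lambda$ diagonally, and verify discreteness and finite covolume via $F\times K$. Your argument that $N=\{e\}$ is actually cleaner than the paper's --- the paper invokes Margulis' Normal Subgroup Theorem on $N\lhd\Gamma$, whereas you observe directly that $N$ is a discrete normal subgroup of the centreless $G$ and hence trivial. That shortcut is fine.

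There are, however, two genuine gaps.

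\textbf{Nondiscreteness of $H$.} Your claim ``$K$ finite makes $[\Lambda:\Gamma]$ finite'' is false: one has $\Lambda/\Gamma\cong H/K$, and $H$ discrete with $K$ finite does not force $H$ finite. The fix, using what you have already proved, is immediate: with $N=\{e\}$ the map $\rho$ is injective, so $\Gamma\hookrightarrow\rho(\Gamma)\subseteq K$; if $K$ is finite then $\Gamma$ is finite, contradicting that $\Gamma$ is a lattice in the nondiscrete $G$. (The paper instead cites a characterisation from \cite{CP17}: $H$ is discrete iff $\Lambda$ normalises a finite-index subgroup $\Gamma_0$ of $\Gamma$, which would then be normal in $G$ by density.)

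\textbf{Strong irreducibility.} Checking only the two projections to $G$ and to $H$ is insufficient when $G=G_1\times\cdots\times G_k$ with $k\geq 2$: Definition~\ref{D:strirr} demands density in \emph{every} proper subproduct of $G_1\times\cdots\times G_k\times H$, in particular in each $G_0\times H$ for $G_0$ a proper subproduct of $G$. Density in $G_0$ and density in $H$ separately do not give density in $G_0\times H$. The paper handles this by noting that $\Gamma$ already projects densely to $G_0\times K$ (strong irreducibility of $\Gamma$ plus $\overline{\rho(\Gamma)}=K$), whence $\Lambda$ projects densely to $G_0\times H$. You should add this step.
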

\begin{proof}
The group $H$ will be the relative profinite completion $\rpf{\Lambda}{\Gamma}$ (see e.g.~\cite{CP17} Section 6 for details, we indicate here the facts needed for the proof).  Define the map $\tau : \Lambda \to \mathrm{Symm}(\Lambda / \Gamma)$ sending $\lambda$ to its action on $\Lambda / \Gamma$ as a symmetry of $\Lambda / \Gamma$.  Theorem 6.3 in \cite{CP17} states that $H = \overline{\tau(\Lambda)}$ is a locally compact totally disconnected group and $K = \overline{\tau(\Gamma)}$ is a compact open subgroup.

Let $N = \mathrm{ker}(\tau)$ be the kernel of $\tau$.  Then $N \cap \Gamma \normal \Gamma$ and $N \normal \Lambda$.  By Margulis' Normal Subgroup Theorem, $N \cap \Gamma$ is either finite or has finite index in $\Gamma$.  Since $G$ is center-free, if $N$ is finite it is trivial.  Suppose it has finite index.  As $N \normal \Lambda$ then $\overline{N} \normal \overline{\Lambda} = G$.  The only normal subgroups of $G$ are finite or are proper subproducts.  Since $N$ contains a finite index subgroup of $\Gamma$, it is infinite, but then $N$ must be contained in a proper subproduct so a finite index subgroup of $\Gamma$ would be contained in a proper subproduct, contradicting that $\Gamma$ is strongly irreducible.  So the kernel of $\tau$ is trivial.

Proposition 6.1.2 \cite{CP17} states that $H$ will be discrete if and only if $\Lambda$ normalizes a finite index subgroup of $\Gamma$.  If $\Gamma_{0}$ is finite index in $\Gamma$ and $\Gamma_{0} \normal \Lambda$ then $\Gamma_{0} \normal G$ as $\Gamma_{0}$ is discrete and $\Lambda$ is dense, but this is impossible.  So $H$ is nondiscrete.

Clearly we can embed $\Lambda \to G \times H$ diagonally and faithfully (as the kernel of $\tau$ is trivial).  Likewise we can embed $\Gamma$ diagonally, and we can identify both with their images.

Let $U$ be an open neighborhood of the identity in $G$ such that $\Gamma \cap U = \{ e \}$.  Then $\Lambda \cap U \times K = \Gamma \cap U \times K = \{ e \}$ .  So $\Lambda$ is discrete in $G \times H$.  Let $F$ be a fundamental domain for $\Gamma$ in $G$: $\mathrm{Haar}_{G}(F) < \infty$ and $F\Gamma = G$.  Then $(F \times K)\Lambda = G \times H$ (as the projection to $H$ of $\Lambda$ is dense) and $\mathrm{Haar}_{G\times H}(F \times K) < \infty$ since $K$ is compact.  Hence $\Lambda$ is a lattice in $G \times H$.

By construction, $\Lambda$ projects densely to $H$.  As $\Lambda$ is dense in $G$ by hypothesis, it projects densely to $G$ from $G \times H$.  For any proper subproduct $G_{0}$ of $G$, $\Lambda$ projects densely to $G_{0}$ since $\Gamma$ does.  Since $\Gamma$ projects densely to $G_{0} \times K$, we have that $\Lambda$ projects densely to $G_{0} \times H$.  Thus $\Lambda$ is strongly irreducible.
\end{proof}

This allows us to define:

\begin{definition}
Let $\Gamma < G$ be a lattice and $\Lambda < G$ a dense commensurator.  Let $\kappa = \kappa_{G} \times \kappa_{H}$ be an admissible probability measure on $G \times H$.  Then there is a probability measure $\mu$ on $\Lambda$ so that the $\Lambda$ action on the $(G\times H,\kappa)$-Poisson boundary is $\mu$-stationary.

An action of $\Lambda$ is \textbf{stationary} when it is stationary for some $\mu$ coming from such a $\kappa$ (i.e.~when it is a stationary action if we treat $\Lambda$ as a lattice).
\end{definition}

We also need the converse of the above theorem; lattices can be treated as commensurators:

\begin{proposition}\label{P:relpro}
Let $\Lambda < G \times H$ be a strongly irreducible lattice, $G$ and $H$ locally compact second countable groups with $H$ totally disconnected and $K < H$ a compact open subgroup.

Let $\Lambda \actson (X,\nu)$ be a stationary action.

Set $\Gamma = \mathrm{proj}_{G}~(\Lambda \cap G \times K)$.

Then $\Gamma$ is a strongly irreducible lattice in $G$ and $\Gamma \actson (X,\nu)$ is a stationary action.
\end{proposition}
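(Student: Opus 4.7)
The plan is to treat the lattice statement and the stationarity statement separately: the former using strong irreducibility of $\Lambda$ in several ways, the latter via a standard argument transferring the $\Lambda$-equivariant boundary map to a $\Gamma$-equivariant one using $K$-invariance.

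For the lattice part, strong irreducibility of $\Lambda$ applied to the proper subproduct $G \subsetneq G \times H$ shows that $\mathrm{proj}_G$ is faithful on $\Lambda$, so $\mathrm{proj}_G$ restricts to an isomorphism $\Lambda \cap (G \times K) \to \Gamma$. Applied to the subproduct $H \subsetneq G \times H$, strong irreducibility gives $\mathrm{proj}_H(\Lambda)$ dense in $H$; combined with $K$ being open, this yields $\Lambda \cdot (G \times K) = G \times H$, hence a measurable bijection $(G \times K)/(\Lambda \cap (G \times K)) \cong (G \times H)/\Lambda$, showing that $\Lambda \cap (G \times K)$ is a lattice in $G \times K$. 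Projecting away the compact factor $K$ then exhibits $\Gamma$ as a lattice in $G$. For strong irreducibility of $\Gamma$ at a proper subproduct $G_0 \subsetneq G$: faithfulness of $\mathrm{proj}_{G_0}$ restricted to $\Gamma$ follows by applying strong irreducibility of $\Lambda$ to the proper subproduct $G_0$ of $G \times H$; and for density, given a nonempty open $U \subset G_0$, strong irreducibility applied to the subproduct $G_0 \times H \subsetneq G \times H$ produces $\lambda \in \Lambda$ with $\mathrm{proj}_{G_0 \times H}(\lambda) \in U \times K$, so $\lambda \in \Lambda \cap (G \times K)$ with $\mathrm{proj}_{G_0}(\lambda) \in U$.

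For the stationarity, write the $(G \times H, \kappa)$-Poisson boundary as the product $(B_G, \xi_G) \times (B_H, \xi_H)$ of the $(G, \kappa_G)$- and $(H, \kappa_H)$-boundaries. By the Furstenberg correspondence, the $\mu_\Lambda$-stationarity of $\nu$ yields a $\Lambda$-equivariant measurable map $\phi : B_G \times B_H \to P(X)$ with $\int \phi \, d(\xi_G \times \xi_H) = \nu$. Define
\[
\phi'(\beta_G) \,:=\, \int_{B_H} \phi(\beta_G, \beta_H) \, d\xi_H(\beta_H).
\]
For $\gamma \in \Gamma$ with unique lift $\lambda_\gamma = (\gamma, k_\gamma) \in \Lambda \cap (G \times K)$, the $\Lambda$-equivariance $\phi(\gamma \beta_G, k_\gamma \beta_H) = \gamma_* \phi(\beta_G, \beta_H)$ together with the change of variables $\beta_H \mapsto k_\gamma \beta_H$ (legal because $\xi_H$ is $K$-invariant) yields $\phi'(\gamma \beta_G) = \gamma_* \phi'(\beta_G)$. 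Fubini gives that the barycenter of $\phi'$ against $\xi_G$ is $\nu$, so $\phi'$ is a $\Gamma$-equivariant boundary map from $B_G$ to $P(X)$ with barycenter $\nu$, witnessing $\mu_\Gamma$-stationarity of $\nu$ where $\mu_\Gamma$ is the admissible measure on $\Gamma$ corresponding to $\kappa_G$.

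The critical technical point is the $K$-invariance of $\xi_H$, essential for the change of variables. This follows from the $K$-invariance of $\kappa_H$ built into the stationary-action setup: for $k \in K$, $k_* \xi_H = k_*(\kappa_H * \xi_H) = (k_* \kappa_H) * \xi_H = \kappa_H * \xi_H = \xi_H$. Without this invariance the equivariance calculation would break, so the compact-openness of $K$ together with the invariance of the $H$-factor of the driving measure are both used crucially.
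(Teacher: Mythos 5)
Your proof is correct and follows essentially the same route as the paper: the stationarity transfer rests on the same two facts (the $K$-invariance of the $H$-factor of the Poisson boundary, inherited from the $K$-invariance of $\kappa_H$, together with the $\Gamma$-equivariance of the boundary map), the only cosmetic difference being that you integrate out $B_H$ to obtain a boundary map on $B_G$ alone where the paper keeps the product boundary $\beta_G \times \beta_H$ and observes it is already $\mu_\Gamma$-stationary. Your lattice argument is the standard one the paper compresses to a single line, and you additionally supply the verification of strong irreducibility of $\Gamma$, which the paper's proof asserts but does not write out.
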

\begin{proof}
Let $L = \Lambda \cap G \times K$.  Then $L$ is discrete since $K$ is open.  As $K$ is compact, $L$ has finite covolume in $G \times H$.  Since $K$ is compact, $\Gamma = \mathrm{proj}_{G}~L$ is discrete hence is a lattice in $G$.

Let $\kappa = \kappa_{G} \times \kappa_{H}$ be the admissible measure on $G \times H$ and $\mu_{\Lambda}$ the probability measure on $\Lambda$ such that the $(G\times H,\kappa)$-Poisson boundary is $\mu_{\Lambda}$-stationary under the $\Lambda$-action and $\Lambda \actson (X,\nu)$ is $\mu_{\Lambda}$-stationary.  

Since $\kappa$ is a product measure, $PB(G\times H,\kappa_{G}\times\kappa_{H}) = (B_{G},\beta_{G}) \times (B_{H},\beta_{H})$ is the product of the $(G,\kappa_{G})$- and $(H,\kappa_{H})$-boundaries.  The boundary map $b : B_{G} \times B_{H} \to P(X)$ has the property that $\mathrm{bar}~b_{*}(\beta_{G} \times \beta_{H}) = \nu$.

Define $\mu_{\Gamma}$ to be the probability measure on $\Gamma$ so that $(B_{G},\beta_{G})$ is $\mu_{\Gamma}$-stationary under the $\Gamma$-action.

Treating $\Gamma < \Lambda < G \times H$, we have that $\mathrm{proj}_{H}~\Gamma$ leaves $\beta_{H}$ invariant since $\kappa_{H}$ is chosen to be $K$-invariant.  Since $\mu_{\Gamma} * \beta_{G} = \beta_{G}$ this means that $\mu_{\Gamma} * \beta_{G} \times \beta_{H} = \beta_{G} \times \beta_{H}$.

Since the boundary map $B$ is $\Lambda$-equivariant, it is $\Gamma$-equivariant and so we conclude that
\[
\mu_{\Gamma} * \nu = \mu_{\Gamma} * \mathrm{bar}~b_{*}(\beta_{G} \times \beta_{H})
=  \mathrm{bar}~b_{*}(\mu_{\Gamma} * \beta_{G} \times \beta_{H})
=  \mathrm{bar}~b_{*}(\beta_{G} \times \beta_{H}) = \nu
\]
meaning the action is stationary for $\Gamma$.
\end{proof}

\section{Stationary Factor Theorems}

The intermediate factor theorems of Stuck-Zimmer \cite{SZ} and Bader-Shalom \cite{BS}, as well as those of the author \cite{CP17}, \cite{Cr17}, all make assertions about the structure of an intermediate factor $A$ between a measure-preserving $G$-space $(X,\nu)$ and the product of it with the Poisson boundary $(B \times X, \beta \times \nu)$.  We now establish such a factor theorem for stationary actions.

\subsection{The Stationary Joining}

For a stationary $G$-space $(X,\nu)$, the product space $(B \times X, \beta \times \nu)$ will not in general be stationary (and indeed will only be when $X$ is measure-preserving).  Our factor theorem employs the stationary joining of Furstenberg-Glasner \cite{FG} in its place.  The reader is referred to Glasner \cite{glasner} for a detailed exposition on joinings.
 
\begin{definition}[\cite{FG}]
Let $G$ be a locally compact second countable group and $\mu$ an admissible measure on $G$.  Let $(X,\nu)$ and $(Y,\eta)$ be $(G,\mu)$-spaces.  Let $\nu_{\omega}$ and $\eta_{\omega}$, for $\omega \in G^{\mathbb{N}}$, denote the conditional measures (see e.g.~\cite{BS} Theorem 2.10) which exist $\mu^{\mathbb{N}}$-almost surely.  Define the probability measure on $X \times Y$ by
$\rho = \int_{G^{\mathbb{N}}} \nu_{\omega} \times \eta_{\omega}~d\mu^{\mathbb{N}}(\omega)$.
Then $(X \times Y, \rho)$ is a joining of $(X,\nu)$ and $(Y,\eta)$ which is $\mu$-stationary.

This joining is called the \textbf{$\mu$-stationary joining} of the two systems and written
$(X,\nu) \curlyvee (Y,\eta)$.
\end{definition}

\subsection{The Invariant Products Functor}

To formulate the intermediate factor theorem for general stationary actions, we recall the invariant products functor, introduced in \cite{BS}:

\begin{definition}
Let $G = G_{1} \times \cdots \times G_{k}$ be a product of locally compact second countable groups.
For each $j$ write $\check{G}_{j} = \prod_{i \ne j} G_{i}$ for the subproduct excluding $G_{j}$.

For a $G$-space $(Y,\eta)$ write $(Y_{j},\eta_{j})$ for the $\check{G}_{j}$-ergodic components.  When $G \actson (Y,\eta)$ is ergodic, $G_{j} \actson (Y_{j},\eta_{j})$ is ergodic.

The $G$-map $(Y,\eta) \to (Y_{1},\eta_{1}) \times \cdots \times (Y_{k},\eta_{k})$ is the \textbf{invariant products functor}.
\end{definition}

\subsection{The Stationary Intermediate Factor Theorem for Product Groups}

\begin{theorem}\label{T:factorprod}
Let $G = G_{1} \times \cdots \times G_{k}$ be a product of at least two locally compact second countable groups and $\mu = \mu_{1} \times \cdots \times \mu_{k}$ be an admissible probability measure on $G$.

Let $(X,\nu)$ be an ergodic $(G,\mu)$-space.  Let $(B,\beta)$ be the Poisson boundary for $(G,\mu)$.

Let $(A,\alpha)$ be an intermediate factor:
\[
(B,\beta) \curlyvee (X,\nu) \to (A,\alpha) \to (X,\nu)
\]
where the maps are $G$-maps that compose to the natural projection map.

Then $A$ is isomorphic to the relative independent joining of $A_{1} \times \cdots \times A_{k}$ and $X$ over their common factor $X_{1} \times \cdots \times X_{k}$ (where the $A_{1}\times\cdots\times A_{k}$ and $X_{1}\times\cdots\times X_{k}$ are the invariant products functor on $A$ and $X$).
\end{theorem}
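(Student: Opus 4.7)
The plan is to adapt the Bader-Shalom intermediate factor theorem to the stationary setting, using the stationary joining in place of the product. Because $\mu = \mu_1 \times \cdots \times \mu_k$ is a product measure, the Poisson boundary splits as $(B,\beta) = (B_1,\beta_1) \times \cdots \times (B_k,\beta_k)$ with $(B_j,\beta_j)$ the $(G_j,\mu_j)$-boundary; the subgroup $\check{G}_j$ acts trivially on $B_j$, and $(B_j,\beta_j)$ is the space of $\check{G}_j$-ergodic components of $(B,\beta)$.

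The first step is to identify the $\check{G}_j$-ergodic decomposition of the stationary joining $(B,\beta) \curlyvee (X,\nu)$ as $(B_j,\beta_j) \curlyvee (X_j,\nu_j)$. Using the factorization $\mu^{\mathbb{N}} = \mu_1^{\mathbb{N}} \times \cdots \times \mu_k^{\mathbb{N}}$, the fact that the boundary coordinate $\xi_j \in B_j$ is tail-measurable in the $j$-th factor of $\omega$, and that the conditional measures $\nu_{\omega}$ defining the stationary joining are themselves tail-measurable, one obtains a canonical $G$-map $(B,\beta) \curlyvee (X,\nu) \to (B_j,\beta_j) \curlyvee (X_j,\nu_j)$ realizing the $\check{G}_j$-ergodic decomposition. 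Applying this to the intermediate factor $(A,\alpha)$ yields, for each $j$, an intermediate $G_j$-factor
\[
(B_j,\beta_j) \curlyvee (X_j,\nu_j) \to (A_j,\alpha_j) \to (X_j,\nu_j).
\]

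Combining the invariant products map $A \to A_1 \times \cdots \times A_k$ with the projection $A \to X$, both of which cover the natural map $A \to X_1 \times \cdots \times X_k$, produces a canonical $G$-equivariant map
\[
\Phi \colon (A,\alpha) \longrightarrow (A_1 \times \cdots \times A_k) \times_{X_1 \times \cdots \times X_k} (X,\nu)
\]
into the relative independent joining over the common factor. The task reduces to showing $\Phi$ is an isomorphism of $G$-spaces. Two things must be established: (i) conditional independence of the $A_j$'s over $X_1 \times \cdots \times X_k$, which I would derive from independence of the coordinate random walks $\mu_j^{\mathbb{N}}$ propagated through the stationary joining structure; and (ii) faithfulness of $\Phi$, namely that the $\sigma$-algebra of $A$ is generated by the pullbacks of the $A_j$'s and of $X$.

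Item (ii) is the principal obstacle. The natural approach is a Mautner-type contraction argument: any information in $A$ over $X$ not captured by the $A_j$'s would, after passing to the boundary extension $(B,\beta) \curlyvee (A,\alpha)$, produce a measurable function on $(B,\beta) \curlyvee (X,\nu)$ that is not invariant under any $\check{G}_j$ relative to $X$; but then the fact that each $B_j$ is a $(G_j,\mu_j)$-boundary, combined with the way $(A_j,\alpha_j)$ was defined as the $\check{G}_j$-ergodic quotient, forces this function to descend to $A_1 \times \cdots \times A_k$ after all, giving the contradiction. I expect this step to lean on the boundary-proximal machinery developed in \cite{BS} and \cite{CP17}, suitably adapted to the stationary joining.
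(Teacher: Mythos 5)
Your setup agrees with the paper's: both identify the $\check{G}_{j}$-ergodic components of the stationary joining as $(B_{j},\beta_{j}) \curlyvee (X_{j},\nu_{j})$, apply the invariant products functor to get the intermediate factors $(B_{j},\beta_{j})\curlyvee(X_{j},\nu_{j}) \to (A_{j},\alpha_{j}) \to (X_{j},\nu_{j})$, and reduce to showing that the canonical map $\Phi$ from $A$ to the relative independent joining is an isomorphism. The gap is in how you propose to close this last step, which you yourself flag as the principal obstacle. Your item (ii) is argued by a ``Mautner-type'' sketch: a set in $A$ not captured by the $A_{j}$'s and $X$ would give a function on $(B,\beta)\curlyvee(X,\nu)$ that is not $\check{G}_{j}$-invariant relative to $X$ for any $j$, and this is supposed to force descent to $A_{1}\times\cdots\times A_{k}$. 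As stated this does not close: failure of relative $\check{G}_{j}$-invariance for every $j$ gives no reason for measurability with respect to $\prod_{j} A_{j}$, and the problem is not to show that some function descends but to rule out \emph{extra} randomness in $A$ over the fiber product $(A_{1}\times\cdots\times A_{k})\times_{X_{1}\times\cdots\times X_{k}} X$.

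The mechanism the paper uses, and which your proposal is missing, is the dichotomy between relatively contractive and relatively measure-preserving maps. Three ingredients do the work: (a) since $(B,\beta)$ is contractive, the projection $B \curlyvee X \to X$ is relatively contractive, and relative contractiveness passes to the intermediate factor, so $A \to X$ is relatively contractive; (b) the invariant products functor is relatively measure-preserving for stationary actions (\cite{Cr17} Proposition 2.12.1, \cite{BS} Proposition 1.10), so the vertical maps $A \to A_{1}\times\cdots\times A_{k}$ and $X \to X_{1}\times\cdots\times X_{k}$ are relatively measure-preserving; (c) a general piecing-together theorem (Theorem 2.41 of \cite{Cr17}) says that a commuting square with a relatively contractive horizontal map and relatively measure-preserving vertical maps forces the top corner to be the relative independent joining of the other two over the bottom corner. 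None of these is identified in your proposal, and (a) in particular is the precise, already-stationary replacement for the ``boundary-proximal machinery, suitably adapted'' that you defer to; without it, item (ii) has no proof. Your item (i) is fine in outline --- the independence of the $A_{j}$'s over $X_{1}\times\cdots\times X_{k}$ is exactly the content of the cited invariant-products results --- but it is subsumed by (b) above rather than needing a separate random-walk argument.
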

\begin{proof}
The invariants product functor (see \cite{Cr17} Section 2.12) mapping a $G$-space to the product of the spaces of $G_{j}$-ergodic components is relatively measure-preserving for stationary actions (\cite{Cr17} Proposition 2.12.1; see also \cite{BS} Proposition 1.10).

The $\tilde{G}_{j}$ ergodic components of $B \curlyvee X$ are $B_{j} \curlyvee X_{j}$ (the reader may verify this straightforward fact).  Thus we have the commuting diagram of $G$-maps
\begin{diagram}
B \curlyvee X								&\rTo			&A				&\rTo		&X\\
\dTo									&				&\dTo			&					&\dTo\\
(B_{1} \curlyvee X_{1}) \times \cdots \times (B_{k} \curlyvee X_{k})	&\rTo	& A_{1} \times \cdots \times A_{k}	&\rTo &X_{1} \times \cdots \times X_{k}
\end{diagram}

Since $B$ is a contractive space, the map $B \curlyvee X \to X$ is relatively contractive hence so is the map $A \to X$.  As the downward maps are relatively measure-preserving, Theorem 2.41 in \cite {Cr17} implies that $A$ is isomorphic to the relative independent joining of $A_{1}\times\cdots\times A_{k}$ and $X$ over $X_{1}\times\cdots \times X_{k}$.
\end{proof}

\begin{corollary}\label{C:denseprojs}
Let $G = G_{1} \times \cdots \times G_{k}$ be a product of locally compact second countable groups and $\mu = \mu_{1} \times \cdots \times \mu_{k}$ be an admissible probability measure on $G$.

Let $(X,\nu)$ be an ergodic $(G,\mu)$-space such that $\mathrm{proj}_{j}~\stab(x)$ is dense in $G_{j}$ for all $j$ almost everywhere.
Then $(X,\nu)$ is measure-preserving and weakly amenable.
\end{corollary}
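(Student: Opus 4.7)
The plan is to apply the stationary intermediate factor theorem (Theorem~\ref{T:factorprod}) to the trivial intermediate factor $A = (B,\beta) \curlyvee (X,\nu)$ itself. Doing so will identify $(B,\beta)\curlyvee(X,\nu)$ with the relative independent joining of $(B_1 \curlyvee X_1) \times \cdots \times (B_k \curlyvee X_k)$ and $(X,\nu)$ over $(X_1,\eta_1) \times \cdots \times (X_k,\eta_k)$. The heart of the argument is to show that each $X_j$ is essentially a single point; once that is established, the relative independent joining collapses to an outright product and the measure-preserving conclusion can be read off directly.

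To collapse $X_j$, I would let $\pi_j : X \to X_j$ denote the $\check{G}_j$-ergodic decomposition map and observe that for any $h \in \stab(x)$, writing $h_j = \mathrm{proj}_{G_j}(h) \in G_j$ and $\check{h}_j \in \check{G}_j$ for the two components of $h$, the triviality of the $\check{G}_j$-action on $X_j$ together with $h \cdot x = x$ force $h_j \cdot \pi_j(x) = \pi_j(x)$. Thus $\mathrm{proj}_{G_j}\,\stab(x) \subseteq \mathrm{Stab}_{G_j}(\pi_j(x))$. Invoking the standard fact that stabilizers in a stationary ergodic action are almost surely \emph{closed} subgroups, the density hypothesis upgrades to $\mathrm{Stab}_{G_j}(\pi_j(x)) = G_j$ almost surely. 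Since $G_j \actson (X_j,\eta_j)$ is ergodic and acts trivially almost everywhere, $X_j$ must be essentially a single point.

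With every $X_j$ trivial one has $B_j \curlyvee X_j = B_j$, the base of the relative joining collapses, and Theorem~\ref{T:factorprod} reduces to
\[
(B,\beta) \curlyvee (X,\nu) \;\cong\; (B,\beta) \times (X,\nu).
\]
Unwinding the definition of the stationary joining as $\int_{G^{\mathbb{N}}} \beta_\omega \times \nu_\omega\, d\mu^{\mathbb{N}}(\omega)$ and using that $\beta_\omega$ is almost surely a point mass by contractivity of $B$, equality with $\beta \times \nu$ forces $\nu_\omega = \nu$ for $\mu^{\mathbb{N}}$-almost every $\omega$; this is precisely the statement that $(X,\nu)$ is measure-preserving. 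Weak amenability should then follow from the measure-preserving structure together with the dense-projection hypothesis: the stabilizer is an almost surely closed subgroup of $G$ whose image in each $G_j$ is dense, from which a Zimmer-style argument on ergodic $G$-spaces with such stabilizers yields amenability of the $G$-action on $X$.

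I expect the main obstacle to be verifying the closedness of stabilizers in the stationary setting, since that is what upgrades the hypothesis from ``dense projection'' to ``surjective projection'' and trivializes $X_j$; together with formalizing the final weak-amenability step. The remaining pieces are clean applications of Theorem~\ref{T:factorprod} and the definition of the stationary joining.
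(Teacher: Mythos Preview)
Your argument for measure-preserving is correct and takes a route genuinely different from the paper's. The paper does not apply Theorem~\ref{T:factorprod} with $A = B \curlyvee X$; instead it invokes the \emph{product random subgroups functor} of \cite{Cr17}: one realizes each closure $\overline{\mathrm{proj}_{G_j}\stab(x)}$ as the stabilizer map of a $G_j$-space $(Z_j,\zeta_j)$, observes that $Z = \prod Z_j$ is trivial (full stabilizers), and cites Theorem~4.12 of \cite{Cr17} to the effect that $X \to Z$ is relatively measure-preserving. Your route via $B \curlyvee X \cong B \times X$ and $\nu_\omega = \nu$ also works; in fact there is a shortcut you could take once you have shown each $X_j$ trivial, since the invariants product functor $X \to \prod X_j$ is itself relatively measure-preserving (this is stated inside the proof of Theorem~\ref{T:factorprod}), so $X$ is measure-preserving immediately, without passing through $B$ at all. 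Your worry about closedness is misplaced: what you need is that the stabilizer of $\pi_j(x)$ \emph{in $G_j$} is closed, and that is automatic because you may model the $G_j$-action on $X_j$ continuously; you never need $\stab(x)$ itself to be closed in $G$.

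The weak amenability half, however, has a genuine gap. The sentence ``a Zimmer-style argument on ergodic $G$-spaces with such stabilizers yields amenability'' is not a proof, and there is no off-the-shelf statement that dense projections of stabilizers implies amenability of the action. The paper's argument is concrete and uses Theorem~\ref{T:factorprod} again, this time for a \emph{general} affine orbital extension $A$ of $(X,\nu)$: strong amenability of $G \actson B$ produces $G$-maps $B \curlyvee X \to A \to X$, the factor theorem gives $A \cong (A_1 \times \cdots \times A_k) \times X$ (using that each $X_j$ is trivial), and orbitality of $A$ forces the stabilizers of $A_j$ to be all of $G_j$, collapsing each $A_j$ and hence $A \cong X$. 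Since every affine orbital extension is then trivial, $(X,\nu)$ is weakly amenable by definition. You have all the ingredients for this argument (you already trivialized $X_j$), but you need to run it on an arbitrary affine orbital $A$, not just on $B \curlyvee X$.
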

\begin{proof}
The map $s_{j} : X \to S(G_{j})$ by $x \mapsto \overline{\mathrm{proj}_{G_{j}}~\stab(x)}$ sending each $x$ to a closed subgroup of $G_{j}$ gives rise to a random subgroup of $G_{j}$; realizing that stationary random subgroup, via Theorem 3.3 \cite{Cr17}, as the stabilizers of an action $G_{j} \actson (Z_{j},\zeta_{j})$ gives us the $(G,\mu)$-stationary space  $Z = Z_{1} \times \cdots \times Z_{k}$.

By construction, the stabilizers of $Z$ contain $\overline{\mathrm{proj}_{1}~\stab(x)} \times \cdots \times \overline{\mathrm{proj}_{k}~\stab(x)} = G_{1} \times \cdots \times G_{k}$.  As $Z$ is ergodic (since $X$ is), this means $Z$ is the trivial space.

Theorem 4.12 in \cite{Cr17} says that $X \to Z$ is a relatively measure-preserving extension (since $X$ is stationary and $Z = Z_{1} \times \cdots \times Z_{k}$ is the product random subgroups functor of \cite{Cr17} applied to $X$) hence we conclude that $(X,\nu)$ is in fact measure-preserving.

Now consider the factor theorem when $A$ is an affine space over $(X,\nu)$ following the approach pioneered by Stuck and Zimmer \cite{SZ} and used in \cite{BS}, \cite{CP17}, \cite{Cr17}, etc.  The reader is referred to \cite{Cr17} Section 2 for details on affine spaces and their relation to strong and weak amenability of actions.

For any affine space $A$ over $(X,\nu)$, as the action of $G$ on the Poisson boundary is strongly amenable (\cite{Zim84} Section 4.3), there are $G$-maps
$B \curlyvee X \to A \to X$
which compose to the projection map.  In particular, the pushforward of the stationary measure on $B \times X$ endows $A$ with the structure of a $(G,\mu)$-space.

We consider the case when $A$ is orbital over $X$ (has the same stabilizer subgroups).  Since the projection of the stabilizers are dense, the stabilizers of $X_{j}$ are $G_{j}$ almost surely meaning that each $X_{j}$ is trivial.  Therefore $A$ is isomorphic to the independent joining of $X$ and $A_{1} \times \cdots \times A_{k}$.

Let $a \in A$.  Then $\stab(a) = \stab(x)$ for some $x$ mapped to from $a$ so almost every $a$ has stabilizer that projects densely to each $G_{j}$.  As each $A_{j}$ is a $G_{j}$-space, this means that the stabilizers for $A_{j}$ are $G_{j}$ almost everywhere.  So the $A_{j}$ are all trivial meaning that $A$ is isomorphic to $X$.

As this holds for all affine orbital $A$, it follows that $X$ is weakly amenable.
\end{proof}

\section{Inducing Stationary Actions}

Let $\Gamma$ be an irreducible lattice in a locally compact second countable group $G$.  Let $F$ be a fundamental domain for $G / \Gamma$ with associated cocycle $\alpha : G \times F \to \Gamma$ given by $gf\alpha(g,f) \ in F$.  Write $m$ for the Haar measure on $G$ restricted to $F$ and normalized to be a probability measure.

\subsection{The Classical Induced Action}

The classical induced action, due to Zimmer (e.g.~\cite{SZ}), constructs a measure-preserving $G$-action from a measure-preserving $\Gamma$-action as follows: define $G \actson F \times X$ by
\[
g \cdot (f,x) = (gf\alpha(g,f),\alpha(g,f)^{-1}x)
\]
and then the measure $m \times \nu$ is preserved by the $G$-action.

Much of the issue in finding a generalization to stationary actions lies in the fact that there is exactly one $G$-quasi-invariant measure on $F \times X$ which projects to $\nu$ on $X$ and this measure is not stationary (unless $\nu$ is preserved by $\Gamma$) so our induced action will not have the property that its projection to $X$ is $\nu$.  However, the projection will be in the same measure class.

\subsection{The Stationary Induced Action}

Let $\kappa$ be an admissible probability measure on $G$ and let $(B,\beta)$ be the Poisson boundary for $(G,\kappa)$.  Write $\mu$ for the probability measure on $\Gamma$ so that $(B,\beta)$ is $\mu$-stationary.  Since the action of $\Gamma$ on $(B,\beta)$ is strongly amenable (as the action of $G$ is and $\Gamma$ is closed in $G$) there exists the boundary $\Gamma$-map $\pi : B \to P(X)$ (Zimmer \cite{Zim84}; see e.g.~\cite{BS} Theorem 2.14).

Let $\mathrm{bar} : P(P(X)) \to P(X)$ be the barycenter map: $\mathrm{bar}~\eta = \int_{P(X)} \nu~d\eta(\nu)$ (see Furman \cite{furman}).  Since $\pi_{*} : P(B) \to P(P(X))$ is the pushforward map, we have a $\Gamma$-map $\tilde{\pi} = \mathrm{bar} \circ \pi_{*} : P(B) \to P(X)$ such that $\tilde{\pi}(\beta) = \nu$.

Let $(X,\nu)$ be a $(\Gamma,\mu)$-space.  Define the pointwise action $G \actson F \times X$ as in the classical case.  Define the probability measure $\rho$ on $F \times X$ by
\[
\rho = \int_{F} \delta_{f} \times \tilde{\pi}(f^{-1}\beta)~dm(f)
\]
Observe that, since $m$ is $G$-invariant,
\begin{align*}
\kappa * \rho
&= \int_{G} \int_{F} g \cdot (\delta_{f} \times \tilde{\pi}(f^{-1}\beta))~dm(f)~d\kappa(g) \\
&= \int_{G} \int_{F} \delta_{gf\alpha(g,f)} \times \alpha(g,f)^{-1}\tilde{\pi}(f^{-1}\beta)~dm(f)~d\kappa(g) \\
&= \int_{G} \int_{F} \delta_{gf\alpha(g,f)} \times \tilde{\pi}(\alpha(g,f)^{-1}f^{-1}g^{-1}~g\beta)~dm(f)~d\kappa(g) \\
&= \int_{G} \int_{F} \delta_{f_{0}} \times \tilde{\pi}(f_{0}^{-1}~g\beta)~dm(f_{0})~d\kappa(g) \\
&= \int_{F} \delta_{f_{0}} \times \tilde{\pi}(f_{0}^{-1}~\kappa * \beta)~dm(f_{0}) \\
&=  \int_{F} \delta_{f_{0}} \times \tilde{\pi}(f_{0}^{-1}\beta)~dm(f_{0}) 
= \rho
\end{align*}
meaning that $(F \times X, \rho)$ is a $(G,\kappa)$-space, i.e.~$\kappa * \rho = \rho$.

\begin{definition}\label{D:statind}
The $(G,\kappa)$-space just defined is the \textbf{induced stationary action} from the $(\Gamma,\mu)$-space $(X,\nu)$.
We will write $G \times_{\Gamma} X$ for this space.
\end{definition}

\subsection{Properties of the Induced Action}

\begin{proposition}
The measure on the induced stationary space projects to $F$ as $m$ and projects to $X$ as a measure in the same class as $\nu$ (when $\nu$ is measure-preserving, it projects to $\nu$ itself).
\end{proposition}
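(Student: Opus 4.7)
The plan is to read off both marginals directly from the defining formula $\rho = \int_{F} \delta_{f} \times \tilde\pi(f^{-1}\beta)\,dm(f)$. The $F$-marginal is immediate: since each fiber $\tilde\pi(f^{-1}\beta)$ is a probability measure on $X$, Fubini collapses the pushforward to $F$ to $\int_F \delta_f\,dm(f) = m$.

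For the $X$-marginal, which by the same argument equals $\eta := \int_F \tilde\pi(f^{-1}\beta)\,dm(f)$, the plan is to show that each $\tilde\pi(f^{-1}\beta)$ is equivalent to $\nu$ and then to average. Unfolding the definition $\tilde\pi = \mathrm{bar}\circ\pi_{*}$, for any $\xi \in P(B)$ and measurable $E \subseteq X$ we have $\tilde\pi(\xi)(E) = \int_B \pi(b)(E)\,d\xi(b)$, and in particular $\nu(E) = \int_B \pi(b)(E)\,d\beta(b)$. The key input I would use is that the $G$-action on the Poisson boundary is quasi-invariant, so $f^{-1}\beta \sim \beta$ for every $f \in G$. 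From this the chain of equivalences $\nu(E) = 0$ iff $\pi(b)(E) = 0$ for $\beta$-a.e.\ $b$ iff $\pi(b)(E) = 0$ for $(f^{-1}\beta)$-a.e.\ $b$ iff $\tilde\pi(f^{-1}\beta)(E) = 0$ holds for each fixed $f$, giving $\tilde\pi(f^{-1}\beta) \sim \nu$. Integrating against $m$ then yields $\eta \sim \nu$.

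For the measure-preserving case, the $\Gamma$-equivariant boundary map $\pi : B \to P(X)$ is realized $\beta$-almost surely as the martingale limit of $g_n^{-1}\nu$ along $\mu$-random walk trajectories; when $\nu$ is $\Gamma$-invariant this limit is the constant $\nu$, so $\pi \equiv \nu$ $\beta$-a.e. Consequently $\tilde\pi(f^{-1}\beta) = \nu$ for every $f \in F$, and $\eta = \int_F \nu\,dm(f) = \nu$ exactly, as claimed.

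The only subtle step is the measure-class argument for the $X$-marginal, specifically the use of $f^{-1}\beta \sim \beta$ off the identity; once this is invoked as the $G$-quasi-invariance of the Poisson boundary measure, both directions of the equivalence reduce to Fubini applied to the nonnegative integrand $\pi(b)(E)$, and the measure-preserving reduction is formal.
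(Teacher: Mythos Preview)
Your proof is correct and rests on the same core fact as the paper's: the $G$-quasi-invariance of the Poisson measure $\beta$. The paper packages the $X$-marginal slightly differently, using the affineness of $\tilde\pi = \mathrm{bar}\circ\pi_{*}$ to write $\mathrm{proj}_{X}\rho = \tilde\pi(\check m * \beta)$ in one step (where $\check m$ is the reflected measure) and then invoking $\check m * \beta \sim \beta$; you instead establish $\tilde\pi(f^{-1}\beta)\sim\nu$ pointwise in $f$ and then average. These are two orderings of the same computation. One small bonus: you explicitly justify the parenthetical measure-preserving case via the constancy of the boundary map when $\nu$ is $\Gamma$-invariant, which the paper's proof leaves implicit.
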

\begin{proof}
Observe that 
$\mathrm{proj}_{X}~\rho = \int_{F} \tilde{\pi}(f^{-1}\beta)~dm(f)
= \tilde{\pi}(\check{m}*\beta)$
where $\check{m}$ is the symmetric opposite of $m$.  So $\mathrm{proj}_{X}~\rho$ is in the same measure class as $\nu$ since $\check{m}*\beta$ is in the same measure class as $\beta$.  That $\mathrm{proj}_{F}~\rho = m$ is immediate.
\end{proof}

\begin{proposition}\label{P:indwa}
The induced stationary action is weakly amenable if and only if the action on $(X,\nu)$ is weakly amenable.
\end{proposition}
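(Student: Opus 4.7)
The plan is to establish a correspondence between affine orbital $\Gamma$-extensions of $(X,\nu)$ and affine orbital $G$-extensions of $G \times_{\Gamma} X$ via the induction construction, and then invoke the characterization of weak amenability as the triviality of all such orbital affine extensions (per \cite{Cr17} Section 2).

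For the direction ``$X$ weakly amenable implies $G \times_{\Gamma} X$ weakly amenable'', I would start with an arbitrary affine $(G,\kappa)$-space $\tilde{A}$ sitting over $G \times_{\Gamma} X$ and orbital over it. Using strong amenability of $G \actson (B,\beta)$, lift to obtain $G$-maps $(B,\beta) \curlyvee (G \times_{\Gamma} X) \to \tilde{A} \to G \times_{\Gamma} X$. Restrict to the fiber over a generic $f \in F$: since $F$ is a fundamental domain the cocycle $\alpha(\cdot,f)$ identifies the stabilizer of $f$ in $G$ with $\Gamma$, and the $(G,\kappa)$-stationarity of $\rho$ together with the formula $\rho = \int_F \delta_f \times \tilde\pi(f^{-1}\beta)\,dm(f)$ makes the fiber over $f$ into a $(\Gamma,\mu)$-space isomorphic to $(X,\nu)$. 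The corresponding fiber of $\tilde{A}$ becomes an affine orbital $(\Gamma,\mu)$-extension of $(X,\nu)$, which by weak amenability of $X$ must coincide with $(X,\nu)$ itself. This triviality propagates along $G$-orbits (since the $G$-action glues the fibers) and forces $\tilde{A} \cong G \times_\Gamma X$.

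For the converse, given an affine orbital $(\Gamma,\mu)$-extension $(Y,\eta) \to (X,\nu)$, form the induced $(G,\kappa)$-space $G \times_{\Gamma} Y$ via the construction of Definition \ref{D:statind} applied to $Y$. The induction is fiberwise over $F$, so the affine bundle structure on $Y \to X$ extends $G$-equivariantly to an affine bundle structure on $G \times_\Gamma Y \to G \times_\Gamma X$. The orbital property is preserved because the $G$-stabilizer of $(f,y)$ is $f \stab_\Gamma(y) f^{-1}$ (conjugated by the cocycle), matching the $G$-stabilizer of $(f,x)$ whenever $\stab_\Gamma(y) = \stab_\Gamma(x)$. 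Weak amenability of $G \times_\Gamma X$ then gives $G \times_\Gamma Y \cong G \times_\Gamma X$; restricting this isomorphism to any fiber over $F$ recovers $Y \cong X$, proving $X$ weakly amenable.

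The main obstacle is checking that the stationary measure on the induced affine bundle is indeed the measure coming from the induction of the affine structure; in the classical measure-preserving setting this is automatic, but here the measure $\rho$ is built from the boundary map $\tilde{\pi}$ and the conditional measures of $(B,\beta)$, so one must verify that affine combinations on fibers commute with the barycenter map and that the boundary $\Gamma$-map $B \to P(Y)$ pushes forward correctly to the boundary $\Gamma$-map $B \to P(X)$ through the affine projection $Y \to X$. Once this compatibility is in hand, the same computation that showed $\kappa * \rho = \rho$ in the preceding subsection verifies stationarity of the induced affine measure, and the argument reduces cleanly to the measure-preserving induction/restriction correspondence used in \cite{SZ} and \cite{Cr17}.
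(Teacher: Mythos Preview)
Your approach is not wrong in outline, but it is far more elaborate than the paper's. The paper's proof is essentially a one-line citation: Proposition 2.6.1 in \cite{Cr17} already establishes this equivalence for the classical (measure-preserving) induced action, and the paper simply observes that the result underlying that proposition---stated as Theorem 2.25 in \cite{Cr17}, originally due to Zimmer \cite{Zi77}---is valid for arbitrary quasi-invariant actions, not just measure-preserving ones.

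The insight you are not exploiting is that weak amenability is a property of the measure \emph{class}, not of the particular stationary or invariant representative. The stationary induced space $G \times_\Gamma X$ and the classical induced space share the same underlying Borel $G$-action on $F \times X$ and the same measure class (the stationary measure $\rho$ projects to $m$ on $F$ and to something equivalent to $\nu$ on $X$). Hence the equivalence of weak amenability for $\Gamma \actson X$ and $G \actson G \times_\Gamma X$ follows immediately from the classical statement, with no need to track stationarity at all.

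Your induction/restriction of affine orbital extensions could presumably be made to work, but the ``main obstacle'' you flag---compatibility of the induced affine measure with the barycenter construction and the boundary map---is exactly the complication the paper sidesteps. There is also a small wrinkle in your fiber argument: the fiber over a generic $f \in F$ carries the measure $\tilde{\pi}(f^{-1}\beta)$, which is only \emph{equivalent} to $\nu$, and the acting group is $f\Gamma f^{-1}$ rather than $\Gamma$ itself; sorting this out again requires knowing that only the measure class matters. Once that is granted, your argument collapses to the citation anyway.
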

\begin{proof}
This is Proposition 2.6.1 in \cite{Cr17}.  While that proposition is stated as holding for measure-preserving actions, the result it relies on, stated as Theorem 2.25 \cite{Cr17}, is actually a result of Zimmer \cite{Zi77} which holds for quasi-invariant actions.
\end{proof}

\begin{proposition}\label{P:indmp}
If the induced stationary action is measure-preserving so is $(X,\nu)$.
\end{proposition}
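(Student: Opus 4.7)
The plan is to show that $G$-invariance of $\rho$ forces the $\Gamma$-equivariant boundary map $\pi : B \to P(X)$ to be constantly equal to $\nu$ on a $\beta$-full measure set; the $\Gamma$-equivariance of $\pi$ then yields $\gamma \nu = \nu$ for all $\gamma \in \Gamma$, i.e.\ $\nu$ is $\Gamma$-invariant.

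First I would assemble a $G$-equivariant map $\psi : B \to P(F \times X)$ for the induced action by
\[
\psi(b) = \int_F \delta_f \times \pi(f^{-1}b) \, dm(f).
\]
Its barycenter against $\beta$ is $\rho$, and $G$-equivariance ($g\cdot\psi(b) = \psi(gb)$) is a direct calculation parallel to the verification $\kappa \ast \rho = \rho$ preceding Definition~\ref{D:statind}: use $\Gamma$-equivariance of $\pi$ to rewrite $\alpha(g,f)^{-1}\pi(f^{-1}b) = \pi((g\cdot f)^{-1}gb)$ and invariance of $m$ under the $G$-action on $F$ to change variables $f' = g\cdot f$.

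The central step is to argue $\psi(b) = \rho$ for $\beta$-a.e.\ $b$. Form the joining $\tau = \int_B \delta_b \times \psi(b)\, d\beta(b)$ on $B \times (F \times X)$. By $G$-equivariance of $\psi$ and $\kappa$-stationarity of $\beta$, $\tau$ is a $\kappa$-stationary joining of $(B,\beta)$ and $(F \times X, \rho)$ with the correct marginals. Since $\rho$ is $G$-invariant, its Furstenberg--Glasner conditional measures satisfy $\rho_\omega = \rho$ for $\kappa^{\mathbb{N}}$-a.e.\ $\omega$, while the Poisson boundary conditionals $\beta_\omega = \delta_{b(\omega)}$ are Dirac. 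Any joining of a Dirac mass and a probability measure must be the product, so $\tau_\omega = \delta_{b(\omega)} \times \rho$ almost surely, giving $\tau = \beta \times \rho$. Comparing with the definition of $\tau$ then forces $\psi(b) = \rho$ for $\beta$-a.e.\ $b$.

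Disintegrating the equality $\psi(b) = \rho$ over the $F$-coordinate yields $\pi(f^{-1}b) = \rho_f = \tilde\pi(f^{-1}\beta)$ for $m$-a.e.\ $f$ and $\beta$-a.e.\ $b$. Fixing one such $f_0$, the map $b \mapsto \pi(f_0^{-1}b)$ is constantly $\rho_{f_0}$; by $G$-quasi-invariance of $\beta$ (admissibility of $\kappa$), $(f_0^{-1})_{\ast}\beta$ and $\beta$ are mutually absolutely continuous, so $\pi(b) = \rho_{f_0}$ for $\beta$-a.e.\ $b$. Integrating identifies the constant as $\nu = \tilde\pi(\beta)$, so $\pi \equiv \nu$ $\beta$-a.e. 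Finally, $\Gamma$-equivariance reads $\gamma\nu = \gamma\pi(b) = \pi(\gamma b) = \nu$ for every $\gamma \in \Gamma$, completing the proof.

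The main obstacle is the joining uniqueness assertion: that the $\kappa$-stationary joining $\tau$ must collapse to $\beta \times \rho$. This reduces to the standard fact that $\kappa$-stationary joinings have conditional measures that are joinings of the conditional measures of their factors, combined with the observations that $G$-invariant actions have constant conditional measures and Poisson boundaries have Dirac conditional measures. Everything downstream is essentially bookkeeping.
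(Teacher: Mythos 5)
Your argument is correct, but it takes a noticeably longer route than the paper's. The paper simply computes $g \cdot \rho = \int_{F} \delta_{f_{0}} \times \tilde{\pi}(f_{0}^{-1}g\beta)\,dm(f_{0})$ (the same change of variables used to verify $\kappa * \rho = \rho$), so measure-preservation plus uniqueness of the disintegration over $F$ gives $\tilde{\pi}(f^{-1}\beta) = \tilde{\pi}(f^{-1}g\beta)$ for a.e.\ $f$ and all $g$; hence $\tilde{\pi}(h\beta)$ is independent of $h \in G$ and in particular $\gamma\nu = \tilde{\pi}(\gamma\beta) = \tilde{\pi}(\beta) = \nu$. You instead work at the level of the boundary map $\pi : B \to P(X)$ rather than $\tilde{\pi}$ on $P(B)$, assemble the equivariant map $\psi$, and invoke the conditional-measure structure of stationary couplings (Dirac conditionals on $B$, constant conditionals on an invariant measure) to collapse the joining to a product and conclude that $\pi$ is essentially constant. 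All the steps check out: $\tau$ is indeed stationary with the right marginals, its conditionals do project to the conditionals of the factors, and a coupling with a Dirac marginal is a product. What your approach buys is the stronger-looking (though standardly equivalent) intermediate statement that the boundary map $B \to P(X)$ is essentially constant equal to $\nu$; what it costs is the full machinery of conditional measures along random walks, which the paper's two-line computation avoids entirely. If you streamline, note that once you have $G$-equivariance of $\psi$ and $\mathrm{bar}\,\psi_{*}\beta = \rho$, you could also just push $\psi$ through the barycenter to recover exactly the paper's identity and skip the joining argument.
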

\begin{proof}
Since
$g \cdot \rho = \int \delta_{f_{0}} \times \tilde{\pi}(f_{0}^{-1}g\beta)~dm(f_{0})$
the induced action being measure-preserving implies that
$\tilde{\pi}(f^{-1}\beta) = \tilde{\pi}(f^{-1}g\beta)$
for all $f \in F$ and $g \in G$ meaning that $\tilde{\pi}(\beta)$ is $\Gamma$-invariant.
\end{proof}

\subsection{The Stationary Intermediate Factor Theorem for Lattices}

\begin{corollary}\label{C:latticedenseprojs}
Let $\Gamma < G = G_{1} \times \cdots \times G_{k}$ be a strongly irreducible lattice in a product of at least two locally compact second countable groups.

Let $\Gamma \actson (X,\nu)$ be an ergodic stationary action such that $\mathrm{proj}_{j}~\stab(x)$ is dense in $G_{j}$ for all $j$ almost everywhere.  Then $(X,\nu)$ is measure-preserving and weakly amenable.
\end{corollary}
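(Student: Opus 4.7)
The plan is to lift the hypothesis to the ambient group via the induced stationary action and then invoke Corollary \ref{C:denseprojs}. Form the induced $(G,\kappa)$-space $(F \times X, \rho) = G \times_{\Gamma} X$ as in Definition \ref{D:statind}. From the pointwise action formula $g\cdot (f,x) = (gf\alpha(g,f),\alpha(g,f)^{-1}x)$ one reads off that $g$ stabilizes $(f,x)$ precisely when $\alpha(g,f) \in \stab_{\Gamma}(x)$ and $g = f\alpha(g,f)^{-1}f^{-1}$; hence
\[
\stab_{G}(f,x) = f\stab_{\Gamma}(x)f^{-1}.
\]
Writing $f = (f_{1},\ldots,f_{k}) \in G_{1}\times\cdots\times G_{k}$, conjugation respects the product decomposition, so $\mathrm{proj}_{j}~\stab_{G}(f,x) = f_{j}\,(\mathrm{proj}_{j}~\stab_{\Gamma}(x))\,f_{j}^{-1}$, whose closure is all of $G_{j}$ by hypothesis. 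Since $\mathrm{proj}_{X}~\rho$ is in the same measure class as $\nu$, the $\nu$-almost-everywhere density assumption on $\stab_{\Gamma}(x)$ propagates to a $\rho$-almost-everywhere density statement on $\stab_{G}(f,x)$.

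Next I would verify that $G \actson (F \times X,\rho)$ is ergodic; this follows by the standard transfer argument, a $G$-invariant measurable set in $F \times X$ producing a $\Gamma$-invariant set in $X$ via restriction to a fiber. Since ergodicity is a measure-class invariant and $\rho$ and $m \times \nu$ define the same null sets on $F \times X$, the classical argument goes through verbatim. With ergodicity established, Corollary \ref{C:denseprojs} applies to $(F \times X,\rho)$ and yields that the induced stationary action is measure-preserving and weakly amenable. Proposition \ref{P:indmp} then gives that $(X,\nu)$ is itself measure-preserving, and Proposition \ref{P:indwa} gives that $(X,\nu)$ is weakly amenable, completing the proof.

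The main obstacle is the ergodicity step for the induced stationary action: the measure $\rho$ is not a product measure (it is built from the boundary map $\tilde{\pi}$ applied to translates of $\beta$), so one must be careful to invoke only measure-class properties when transferring the classical argument. A minor subtlety is ensuring that the almost-everywhere stabilizer computation is valid with respect to $\rho$ rather than a reference product measure; this is handled by the equivalence of $\mathrm{proj}_{X}~\rho$ with $\nu$ established in the propositions on the properties of the induced action.
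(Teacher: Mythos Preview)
Your proof is correct and follows exactly the paper's approach: induce to $G$, transfer the dense-projections hypothesis via $\stab(f,x)=f\,\stab(x)\,f^{-1}$, apply Corollary~\ref{C:denseprojs}, and descend using Propositions~\ref{P:indmp} and~\ref{P:indwa}. In fact your write-up is more careful than the paper's, which omits the ergodicity verification and the explicit appeal to Proposition~\ref{P:indwa} for the weak amenability conclusion.
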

\begin{proof}
Let $(Y,\eta)$ be the induced stationary action.  Since
$\stab(f,x) = f\stab(x)f^{-1}$,
we have that for almost every $y$ and all $j$,
$\overline{\mathrm{proj}_{j}~\stab(y)} = G_{j}$.
Then $Y$ is measure-preserving by Corollary \ref{C:denseprojs} hence $X$ is by Proposition \ref{P:indmp}.
\end{proof}

\section{Projecting Actions to the Ambient Group}

\begin{theorem}\label{T:projectingcomm}
Let $\Gamma < G$ be a lattice in a locally compact second countable group and $\Lambda < G$ a dense commensurator of $\Gamma$.

Let $\Lambda \actson (X,\nu)$ be an ergodic stationary action.

Let $s : X \to S(G)$ be a $\Lambda$-equivariant map.

Then there exists an ergodic stationary $G$-space $(Y,\eta)$ such that $\stab_{*}\eta = s_{*}\nu$.
\end{theorem}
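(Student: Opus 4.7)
The plan is to realize the given data via the stationary induced action of Definition \ref{D:statind} as a stationary random subgroup of $G$, then invoke the realization theorem used in Corollary \ref{C:denseprojs}.

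First, by Theorem \ref{T:cl}, $\Lambda$ embeds as a strongly irreducible lattice in $G \times H$ for a totally disconnected nondiscrete group $H$, and the $\Lambda$-stationary structure on $(X, \nu)$ comes from an admissible product measure $\kappa = \kappa_G \times \kappa_H$. Form the stationary induced $(G \times H, \kappa)$-space $\tilde Y := (G \times H) \times_\Lambda X$ with measure $\tilde\rho$.

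Second, construct a map $\tilde s: \tilde Y \to S(G)$ by $\tilde s(f, x) := \mathrm{proj}_G(f) \cdot s(x) \cdot \mathrm{proj}_G(f)^{-1}$. Using the $\Lambda$-equivariance of $s$ together with the cocycle identity $\alpha$ of the induction, a direct computation shows that $\tilde s$ is $G$-equivariant for the $G$-action restricted from $G \times H$, with the $H$-factor acting trivially on the target $S(G)$. Consequently the pushforward $\tilde s_*\tilde\rho$ on $S(G)$ is $\kappa_G$-stationary under $G$-conjugation, i.e.~a stationary random subgroup of $G$. Applying the realization of stationary random subgroups as stabilizers (Theorem 3.3 of [Cr17], as invoked in the proof of Corollary \ref{C:denseprojs}) yields an ergodic $(G, \kappa_G)$-stationary space $(Y, \eta)$ with $\stab_*\eta = \tilde s_*\tilde\rho$.

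Third, it remains to identify $\tilde s_*\tilde\rho = s_*\nu$. Unfolding $\tilde\rho = \int_F \delta_f \times \tilde\pi(f^{-1}\beta)\,dm(f)$, this becomes
\[
\int_F (\mathrm{proj}_G(f) \cdot)_* s_*\tilde\pi(f^{-1}\beta)\,dm(f) \;=\; s_*\nu.
\]
The identification proceeds by a change of variable on the fundamental domain $F$, combined with the $\Lambda$-equivariance of the boundary map $\pi$, to transport the $\mathrm{proj}_G(f)$-conjugation through and recover $s_*\tilde\pi(\beta) = s_*\nu$.

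The main obstacle is this last identification, whose justification requires careful manipulation of the twist introduced by the induced action; the key technical input is the $\Lambda$-equivariance of $s$, which lets one absorb the $\mathrm{proj}_G(f)$-conjugation into the $\Lambda$-boundary map after integrating over $F$. Ergodicity of the resulting $G$-space follows from ergodicity of $(X, \nu)$ through the ergodicity-preserving properties of the invariant products and induction functors of [Cr17].
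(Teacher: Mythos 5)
Your overall strategy (induce to the ambient group, push forward to $S(G)$, realize as stabilizers via Theorem 3.3 of [Cr17]) is reasonable in spirit, but the step you yourself flag as ``the main obstacle'' is a genuine gap, and it is exactly where the content of the theorem lives. The measure you produce is
\[
\tilde s_*\tilde\rho \;=\; \int_F \bigl(\mathrm{proj}_G(f)\,\cdot\,\mathrm{proj}_G(f)^{-1}\bigr)_*\, s_*\tilde\pi(f^{-1}\beta)\;dm(f),
\]
an average over the fundamental domain of conjugates of the twisted fiber measures. There is no change of variable on $F$ that collapses this to $s_*\nu$: the conjugating elements $\mathrm{proj}_G(f)$ range over a fundamental domain, not over $\Lambda$, so the $\Lambda$-equivariance of $s$ cannot absorb them; and the fiber measures $\tilde\pi(f^{-1}\beta)$ are not $\nu$ (the paper's own proposition on the induced action notes its projection to $X$ is only in the measure class of $\nu$ unless $\nu$ is invariant). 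Asserting $\tilde s_*\tilde\rho = s_*\nu$ amounts to asserting that $s_*\nu$ is already $G$-quasi-invariant under conjugation, which is precisely the conclusion to be proved. Your construction does yield \emph{some} $G$-stationary random subgroup, but not one with law $s_*\nu$, so the statement $\stab_*\eta = s_*\nu$ is not obtained. A secondary problem: you invoke Theorem \ref{T:cl}, which requires $G$ to be a center-free higher-rank semisimple group, whereas the theorem is stated for a general locally compact second countable $G$.

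The paper's route is different and avoids both issues: it restricts the action to $\Gamma$, takes the boundary $\Gamma$-map $\pi : B \to P(X)$ and the composite $s_*\circ\pi : B \to P(S(G))$ (whose barycenter is $s_*\nu$), and uses contractivity of $(B,\beta)$ as a $\Gamma$-space together with Theorem 2.8 of [CS14] to upgrade this to a $\Lambda$-equivariant map onto a genuine $G$-space. Density of $\Lambda$ in $G$ and continuity of $G \actson P(S(G))$ then force $s_{**}\pi_*\beta$, and hence its barycenter $s_*\nu$, to be $G$-stationary, after which Theorem 3.3 of [Cr17] applies directly to $s_*\nu$ itself. If you want to keep an induction-flavored argument, you would still need an input of this kind to transfer quasi-invariance from $\Lambda$ to $G$ before any identification of measures can succeed.
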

\begin{proof}
The measure-preserving case, used in \cite{CP17}, is an easy consequence of density.  The stationary case requires more care.  Treat $(X,\nu)$ as a $\Gamma$-space and let $\pi : (B,\beta) \to (P(X),\pi_{*}\beta)$ be the boundary $\Gamma$-map where $(B,\beta)$ is the $G$-Poisson boundary.  Consider the $\Gamma$-map $s_{*} \circ \pi : B \to P(S(G))$ which has the property that $\mathrm{bar}~s_{**}\pi_{*}(\beta) = s_{*}\nu$.

Since $\Lambda$ acts on $(B,\beta)$ and $(P(S(G)),s_{*}\pi_{*}\beta)$, and since $(B,\beta)$ is a contractive $\Gamma$-space, Theorem 2.8 \cite{CS14} states that $s_{*} \circ \pi$ is in fact a $\Lambda$-equivariant map and there is a $G$-space $(Z,\zeta)$ that is $\Lambda$-isomorphic to $(P(S(G)),s_{**}\pi_{*}\beta)$ via a $\Lambda$-map $\varphi$.

As $\Lambda$ is dense and $G \actson P(S(G))$ continuously, then $s_{**}\pi_{*}\beta$ is in fact $G$-stationary as for every $g \in G$ there exists $\lambda_{n} \in \Lambda$ with $\lambda_{n} \to g$ and so $gs_{**}\pi_{*}\beta = \lim \lambda_{n} s_{**}\pi_{*}\beta = \lim \lambda_{n}\varphi_{*}\zeta = \lim \varphi_{*}(\lambda_{n}\zeta) = \varphi_{*}(g\zeta)$.

As the barycenter map is $G$-equivariant, then $\mathrm{bar}~s_{**}\pi_{*}\beta$ is $G$-stationary and in particular there is a quasi-invariant action of $G$ on $(S(G),\mathrm{bar}~s_{**}\pi_{*}\beta)$.  Theorem 3.3 \cite{Cr17} then gives an action $G \actson (Y,\eta)$ such that $\stab_{*}\eta = \mathrm{bar}~s_{**}\pi_{*}\beta = s_{*}\nu$ which may be taken to be ergodic since $\Lambda \actson (X,\nu)$ is ergodic.
\end{proof}

\begin{corollary}\label{C:projectingSRS}
Let $\Gamma < G$ be a lattice in a locally compact second countable group and $\Lambda < G$ a dense commensurator of $\Gamma$.  Every stationary random subgroup of $\Lambda$ becomes a stationary random subgroup of $G$ under the closure map.
\end{corollary}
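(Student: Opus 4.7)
The plan is to derive this directly from Theorem \ref{T:projectingcomm} by taking $s$ to be the closure-in-$G$ map composed with the stabilizer map of a realization of the given stationary random subgroup.

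First I would invoke Theorem 3.3 of \cite{Cr17}, as was done inside the proofs of Corollary \ref{C:denseprojs} and Theorem \ref{T:projectingcomm}, to realize a given stationary random subgroup $\theta$ of $\Lambda$ as the stabilizer distribution of an ergodic stationary action $\Lambda \actson (X,\nu)$; that is, $\stab_{*}\nu = \theta$ as measures on $S(\Lambda)$. Next I would define the map
\[
s : X \to S(G), \qquad s(x) = \overline{\stab_{\Lambda}(x)},
\]
where the closure is taken inside $G$. Since conjugation by each $\lambda \in \Lambda \subset G$ is a homeomorphism of $G$, we have
\[
s(\lambda x) = \overline{\lambda \stab_{\Lambda}(x) \lambda^{-1}} = \lambda \overline{\stab_{\Lambda}(x)} \lambda^{-1} = \lambda \, s(x)\, \lambda^{-1},
\]
so $s$ is $\Lambda$-equivariant. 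Measurability of the stabilizer map together with continuity of the closure operation on $S(G)$ (with its usual Chabauty topology) ensures $s$ is a measurable $\Lambda$-map.

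Now I would apply Theorem \ref{T:projectingcomm} to this $s$: it produces an ergodic stationary $G$-space $(Y,\eta)$ with $\stab_{*}\eta = s_{*}\nu$. Since $(Y,\eta)$ is a stationary $G$-space, $\stab_{*}\eta$ is a stationary probability measure on $S(G)$, i.e.\ a stationary random subgroup of $G$. On the other hand, $s_{*}\nu$ is exactly the pushforward of $\theta = \stab_{*}\nu$ under the closure map $S(\Lambda) \to S(G)$, $H \mapsto \overline{H}$. Hence the image of $\theta$ under the closure map is a stationary random subgroup of $G$, as claimed.

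I do not anticipate a major obstacle: the corollary is essentially a packaging of Theorem \ref{T:projectingcomm} with the observation that the closure map is $\Lambda$-equivariant (by inner continuity of conjugation). The one point worth verifying carefully is the measurability of $x \mapsto \overline{\stab_{\Lambda}(x)}$ as a map into $S(G)$, which follows from the standard fact that the stabilizer map into $S(\Lambda)$ is Borel and the closure map $S(\Lambda) \to S(G)$ is Borel for the Chabauty structures.
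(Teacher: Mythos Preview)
Your proposal is correct and is precisely the intended argument: the paper states this corollary without proof immediately after Theorem~\ref{T:projectingcomm}, and your derivation---realize the stationary random subgroup via Theorem~3.3 of \cite{Cr17}, take $s(x)=\overline{\stab_\Lambda(x)}$, and apply Theorem~\ref{T:projectingcomm}---is exactly how one unpacks it. The only small point to watch is that Theorem~\ref{T:projectingcomm} assumes ergodicity of $(X,\nu)$, so for a general stationary random subgroup you should pass to ergodic components first (or observe that the stationarity conclusion in the proof of Theorem~\ref{T:projectingcomm} does not actually use ergodicity).
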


\begin{theorem}\label{T:projectinglatt}
Let $G = \prod_{j} G_{j}$ be a product of at least two simple locally compact second countable groups and let $\Gamma < G$ be a strongly irreducible lattice.

Let $\Gamma \actson (X,\nu)$ be an ergodic stationary action.

Let $s : X \to S(G_{0})$ be a $\Gamma$-equivariant map to the space of closed subgroups of some proper subproduct $G_{0} \normal G$.

Then there exists an ergodic $G_{0}$-space $(Y,\eta)$ such that $\stab_{*}\eta = s_{*}\nu$.
\end{theorem}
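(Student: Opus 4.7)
The plan is to follow the strategy of Theorem \ref{T:projectingcomm}, replacing the dense commensurator there with a countable dense subgroup of $G$ built from $\Gamma$ using strong irreducibility. Write $G = G_0 \times G_0'$ and decompose the Poisson boundary $(B,\beta) = (B_0,\beta_0) \times (B_0',\beta_0')$. The key structural observation is that $G_0$ and $G_0'$ commute as direct factors, so conjugation by $G_0'$ acts trivially on $S(G_0)$ and the $G$-action on $S(G_0)$ factors through $G_0$; consequently any $\kappa$-stationary measure on $S(G_0)$ is automatically $\kappa_0$-stationary for $G_0$.

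Treating $(X,\nu)$ as a $\Gamma$-space, let $\pi : (B,\beta) \to (P(X),\pi_{*}\beta)$ be the boundary $\Gamma$-map (so $\mathrm{bar}\,\pi_{*}\beta = \nu$), and consider the $\Gamma$-equivariant composition $s_{*} \circ \pi : B \to P(S(G_0))$, which satisfies $\mathrm{bar}\,(s_{*}\pi)_{*}\beta = s_{*}\nu$ (where $\Gamma$ acts on $P(S(G_0))$ via its projection to $G_0$). By strong irreducibility $\mathrm{proj}_{G_0}(\Gamma)$ is dense in $G_0$; choose a countable dense subset $D \subset G_0'$ and set $\Lambda = \langle \Gamma, D \rangle$. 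Then $\Lambda$ is a countable subgroup of $G$ containing $\Gamma$, and a routine approximation---first choose $\gamma \in \Gamma$ so that $\mathrm{proj}_{G_0}(\gamma)$ is close to the $G_0$-coordinate, then correct in the $G_0'$-direction using $D$---shows $\Lambda$ is dense in $G$.

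Now applying Theorem~2.8 \cite{CS14} exactly as in Theorem \ref{T:projectingcomm} produces a $G$-space $(Z,\zeta)$ and a $\Lambda$-equivariant map $\varphi : Z \to P(S(G_0))$ with $\varphi_{*}\zeta = (s_{*}\pi)_{*}\beta$. Density of $\Lambda$ in $G$ combined with continuity of the $G$-actions on $Z$ and on $P(S(G_0))$ then gives $g \cdot (s_{*}\pi)_{*}\beta = \varphi_{*}(g\zeta)$ for every $g \in G$; integrating against $\kappa$ and applying the $G$-equivariant barycenter map yields $\kappa * s_{*}\nu = s_{*}\nu$, which by the opening observation is $\kappa_0 * s_{*}\nu = s_{*}\nu$. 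Theorem~3.3 \cite{Cr17} then realizes $s_{*}\nu$ as the stabilizer distribution of a $G_0$-action $(Y,\eta)$, which may be taken ergodic since $\Gamma \actson (X,\nu)$ is.

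The main obstacle is ensuring Theorem~2.8 \cite{CS14} applies with this $\Lambda$, since $\Lambda$ is not a commensurator of $\Gamma$. However, its hypotheses as invoked in the proof of Theorem \ref{T:projectingcomm} only require $\Lambda$ to be a countable subgroup of $G$ containing $\Gamma$ and acting on both $(B,\beta)$ and $(P(S(G_0)),(s_{*}\pi)_{*}\beta)$, together with $\Gamma$-contractivity of $B$; none of these reference commensurability, so the argument should carry over verbatim.
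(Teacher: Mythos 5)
Your reduction of the problem to showing that $s_{*}\nu$ is $G_{0}$-quasi-invariant/stationary is fine, and your opening observation that the conjugation action of $G$ on $S(G_{0})$ factors through $G_{0}$ is correct (the paper uses it too). The gap is the application of Theorem~2.8 of \cite{CS14} to $\Lambda = \langle \Gamma, D\rangle$. That theorem is a statement about \emph{commensurators}: its mechanism promotes the $\Gamma$-map $s_{*}\circ\pi$ to a $\Lambda$-map by comparing, for each $\lambda$, the maps $b\mapsto \pi(b)$ and $b\mapsto \lambda^{-1}\pi(\lambda b)$, both of which are equivariant for $\Gamma\cap\lambda^{-1}\Gamma\lambda$, and then invoking uniqueness of boundary maps for that subgroup --- which requires it to have finite index in $\Gamma$, i.e.\ requires commensuration. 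For $\lambda = (e,d)$ with $d\in D\subset G_{0}'$ this fails badly: since $\mathrm{proj}_{G_{0}}$ is faithful on $\Gamma$ and conjugation by $(e,d)$ fixes the $G_{0}$-coordinate, an element $\gamma\in\Gamma$ lies in $\Gamma\cap\lambda\Gamma\lambda^{-1}$ only if its $G_{0}'$-coordinate commutes with $d$, and this subgroup is in general nowhere near finite index (it can be trivial). So the claim that the hypotheses ``do not reference commensurability'' and that ``the argument carries over verbatim'' is precisely the unverified step, and it is false as stated.

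The deeper problem is that for $\lambda=(e,d)$, which acts trivially on $P(S(G_{0}))$ but nontrivially on $B=B_{0}\times\tilde B$, the asserted $\Lambda$-equivariance of $s_{*}\circ\pi$ says exactly that $s_{*}\circ\pi$ is invariant under the $\tilde B$-coordinate, i.e.\ that the boundary map factors through $B_{0}$. That is the entire content of the theorem, not a formality: the paper proves it by applying the Bader--Shalom intermediate factor theorem to realize $(P(S(G_{0})),s_{**}\pi_{*}\beta)$ as a $G$-space $(Z,\zeta)=(Z_{0},\zeta_{0})\times(\tilde Z,\tilde\zeta)$ splitting along the decomposition of $B$, and then using Lemma~\ref{L:pet} (a random-ergodic-theorem argument producing $\gamma_{n}\in\Gamma$ with $\mathrm{proj}_{0}\gamma_{n}\to e$ pushing the mass of any positive-measure subset of $\tilde Z$ to $1$) to force $\tilde Z$ to be trivial. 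Your proposal replaces this mechanism with a citation whose hypotheses are not met, so it assumes the crux of what must be proved.
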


\begin{lemma}[Lemma 5.1 \cite{peterson}]\label{L:pet}
Let $\Gamma < G \times H$ be an irreducible lattice, $U \subseteq G$ an open neighborhood of the identity, $(B_{H},\beta)$ the $(H,\kappa)$-Poisson boundary, $E \subseteq B_{H}$ a positive measure set and $\epsilon > 0$.

There exists $\gamma \in \Gamma$ such that $\mathrm{proj}_{G}~\gamma \in U^{-1}U$ and $\beta((\mathrm{proj}_{H}~\gamma)E) > 1 - \epsilon$.
\end{lemma}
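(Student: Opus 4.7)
The plan is to combine two ingredients. First, lattice recurrence in $G\times H$ produces pairs of $\Gamma$-elements with small $G$-projection whose ratio has $H$-projection landing in any prescribed open set. Second, contractivity of the $(H,\kappa)$-Poisson boundary furnishes elements of $H$ whose action on $(B_H,\beta)$ concentrates almost all the mass on any positive-$\beta$-measure set.

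For the first ingredient, fix an open $V\subseteq U$. Because $V\times H$ has infinite Haar measure in $G\times H$ while $\Gamma$ is a lattice, the set $\Gamma_V:=\Gamma\cap(V\times H)$ must be infinite; since $\Gamma$ is discrete in $G\times H$, its $H$-projections accumulate only at infinity in $H$. Using the irreducibility of $\Gamma$, together with Poincar\'e recurrence applied to the finite-measure right $H$-action on $\Gamma\backslash(G\times H)$, I would show that for every non-empty open $W\subseteq H$ one can find $\gamma_1,\gamma_2\in\Gamma_V$ with $\mathrm{proj}_H(\gamma_1^{-1}\gamma_2)\in W$. Consequently the set $D:=\{\mathrm{proj}_H\gamma:\gamma\in\Gamma,\ \mathrm{proj}_G\gamma\in U^{-1}U\}$ is dense in $H$.

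For the second ingredient, I would use the fact that for $\kappa^{\mathbb{N}}$-a.e.\ sample path $\omega=(h_1,h_2,\dots)$ the measures $(h_1\cdots h_n)_*\beta$ converge weakly to a Dirac mass $\delta_{z(\omega)}$ with $z\sim\beta$. Since $\beta(E)>0$, the event $\{z(\omega)\in E\}$ has positive probability; after applying Egorov and passing to an open approximation of $E$ near such a $z(\omega)$, one extracts an explicit $h\in H$ (of the form $(h_n\cdots h_1)^{-1}$ along the sample path) with $\beta(hE)>1-\epsilon/2$. The map $h\mapsto\beta(hE)$ is lower semicontinuous after a suitable inner-regularization of $E$, so the set $\{h\in H:\beta(hE)>1-\epsilon\}$ contains an open neighborhood of the $h$ just obtained. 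By density of $D$ in $H$, I then pick $h'=\mathrm{proj}_H\gamma\in D$ inside that neighborhood, and the resulting $\gamma\in\Gamma$ fulfills both conclusions of the lemma.

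The principal obstacle lies in passing from the weak-$*$ convergence $(h_1\cdots h_n)_*\beta\to\delta_{z(\omega)}$ to the quantitative lower bound $\beta(hE)>1-\epsilon/2$. Because $E$ is only assumed Borel, its topological interior may be empty, so the portmanteau theorem does not immediately yield the desired inequality at the limit point $z(\omega)$. Overcoming this requires either a density-point argument showing that $\beta$-typical points of $E$ are regular in an appropriate sense on $B_H$, or replacing $E$ from below by a compact set and from above by an open set chosen compatibly with the sample path, and then exploiting the lower semicontinuity of $h\mapsto\beta(hF)$ for $F$ open.
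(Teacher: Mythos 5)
Your route is genuinely different from the paper's, and the difference is where the gap lives. You decouple the two requirements: first you argue that $D=\{\mathrm{proj}_{H}\,\gamma : \gamma\in\Gamma,\ \mathrm{proj}_{G}\,\gamma\in U^{-1}U\}$ is dense in $H$, then you separately produce some $h\in H$ with $\beta(hE)$ near $1$, and finally you glue by perturbing $h$ into $D$. The paper never needs density of $D$ and never perturbs: it runs a single random walk $(\omega_{n})$ on $H$, uses the random ergodic theorem to find return times $n_{j}$ at which $\omega_{n_{j}}^{-1}\cdots\omega_{1}^{-1}(u,k)\in(U\times K)\Gamma$, and along the \emph{same} trajectory uses martingale convergence of $\omega_{1}\cdots\omega_{n}\beta$ to a point mass to get $\beta(\omega_{n_{j}}^{-1}\cdots\omega_{1}^{-1}kE)\to 1$ for the fixed Borel set $kE$; at a return time $\mathrm{proj}_{H}\,\gamma_{j}$ equals the random-walk position up to a correction $k_{j}\in K$, which is absorbed by compactness of $K$ and quasi-invariance. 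Because recurrence and contraction are coupled, no regularity of $h\mapsto\beta(hE)$ is ever invoked.

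In your decoupled version that regularity is indispensable, and the repairs you propose do not supply it. For a general Borel $E$, inner regularization by compact sets gives \emph{upper}, not lower, semicontinuity of $h\mapsto\beta(hF)$; replacing $E$ from below by an open set fails because a positive-measure Borel set need not contain any open set of positive measure; and a density-point argument presupposes a differentiation basis on $B_{H}$ you have not provided. The step can in fact be saved, but by a tool you do not mention: $g\mapsto(g\beta)(E)$ is the bounded $\kappa$-harmonic function $\widehat{\mathbf{1}_{E}}$, hence continuous because $\kappa$ is absolutely continuous with respect to Haar measure, and $\beta(hE)=(h^{-1}\beta)(E)$ is therefore continuous in $h$. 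The same omission affects your second ingredient: the passage from boundary convergence to $\beta(hE)>1-\epsilon/2$ should come from the martingale convergence theorem applied to the fixed set $E$ (so $(\omega_{1}\cdots\omega_{n}\beta)(E)\to\beta_{\omega}(E)\in\{0,1\}$ a.s.\ with mean $\beta(E)>0$), not from weak-$*$ convergence plus portmanteau, and Egorov with open approximations would not close that gap. Finally, your first ingredient should rest on ergodicity of the $H$-action on $(G\times H)/\Gamma$ (almost every $H$-orbit meets every nonempty open set, so a dense orbit through a point of $(V\times K)\Gamma$ yields density of $D$); Poincar\'e recurrence alone only returns you to a fixed positive-measure set and does not make $\mathrm{proj}_{H}(\gamma_{1}^{-1}\gamma_{2})$ land in an arbitrary prescribed open $W$.
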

\begin{proof}
Since $\Gamma$ is irreducible, $H$ acts ergodically on $((G \times H)/\Gamma,m)$ where $m$ is the unique invariant probability measure.  Let $K \subseteq H$ be compact with nonempty interior.  Then $m((U \times K)\Gamma) > 0$.  The random ergodic theorem \cite{kakutani} tells us for $m$-almost every $z \in (G \times H)/\Gamma$ and $\kappa^{\mathbb{N}}$-almost every $(\omega_{n}) \in H^{\mathbb{N}}$ that $\frac{1}{N}\sum_{n=1}^{N}\bbone_{(U \times K)\Gamma}(\omega_{n}^{-1}\cdots\omega_{1}^{-1}z) \to m((U \times K)\Gamma) > 0$.  So there exists $(u,k) \in U \times K$ such that $\omega_{n}^{-1}\cdots\omega_{1}^{-1}(u,k) \in (U \times K)\Gamma$ infinitely often for a.e.~$(\omega_{n})$.

The conditional measures $\beta_{(\omega_{n})} = \lim \omega_{1}\cdots\omega_{n}\beta$ are point masses almost surely and $\beta = \int \beta_{(\omega_{n})}~d\kappa^{\mathbb{N}}((\omega_{n}))$ so, as $\beta(kE) > 0$ as $\beta(E) > 0$, there exists $(\omega_{n}) \in H^{\mathbb{N}}$ with $\omega_{n}^{-1}\cdots\omega_{1}^{-1}(u,k) \in (U \times K)\Gamma$ infinitely often and $\beta_{(\omega_{n})}(kE) = 1$.  So there exists $\{ n_{j} \}$ so that $\omega_{n_{j}}^{-1}\cdots\omega_{1}^{-1}(u,k) = (u_{j},k_{j})\gamma_{j}$ for some $u_{j} \in U$, $k_{j} \in K$, $\gamma_{j} \in \Gamma$ and $\beta(\omega_{n_{j}}^{-1}\cdots\omega_{1}^{-1}kE) \to 1$.

As $\gamma_{j} = (u_{j},k_{j})^{-1} \omega_{n_{j}}^{-1}\cdots\omega_{1}^{-1} (u,k) = (u_{j}^{-1}u, k_{j}^{-1}\omega_{n_{j}}^{-1}\cdots\omega_{1}^{-1}k)$, we have $\mathrm{proj}_{G}~\gamma_{j} \in U^{-1}U$ and $\beta(k_{j}(\mathrm{proj}_{H}~\gamma_{j})E) \to 1$.  Since $K$ is compact, there is $\{ j_{\ell} \}$ so that $k_{j_{\ell}} \to k_{\infty} \in K$.  We know the sets $E_{\ell} = k_{j_{\ell}}(\mathrm{proj}_{H}~\gamma_{j_{\ell}})E$ have $\beta(E_{\ell}) \to 1$ and since $k_{\infty}\beta$ is in the same measure class as $\beta$ then $\beta(k_{j_{\ell}}^{-1}E_{\ell}) \to 1$.  So there exists $\ell$ such that $1 - \epsilon < \beta(k_{j_{\ell}}^{-1}E_{\ell}) = \beta((\mathrm{proj}_{H}~\gamma_{j_{\ell}})E)$.
\end{proof}

\begin{proof}[Proof of Theorem \ref{T:projectinglatt}]
Let $(B,\beta)$ be the Poisson boundary for $G$.  Let $\pi : (B,\beta) \to (P(X),\pi_{*}\beta)$ be the boundary map and consider the $\Gamma$-map $s_{*} \circ \pi : B \to P(S(G_{0}))$. Note the barycenter is $\mathrm{bar}~s_{**}\pi_{*}\beta = s_{*}\nu$.

Write $G = G_{0} \times \tilde{G}$ where $\tilde{G}$ is the simple factors of $G$ not in $G_{0}$.  Then $(B,\beta) = (B_{0},\beta_{0}) \times (\tilde{B},\tilde{\beta})$ is the product of the Poisson boundaries of $G$ and $G_{0}$ (we treat each as a $G$-space by letting $\tilde{G}$ act trivially on $B_{0}$ and $G_{0}$ act trivially on $\tilde{B}$).

The Intermediate Factor Theorem for products (Theorem 1.7 \cite{BS}) tells us $(P(S(G_{0})),s_{**}\pi_{*}\nu)$ is $\Gamma$-isomorphic to a $G$-space $(Z,\zeta)$ via a $\Gamma$-map $\varphi$ and that there is a $G$-map $p : (B,\beta) \to (Z,\zeta)$ such that $p = \varphi \circ s_{*}\pi$.  Moreover, $(Z,\zeta) = (Z_{0},\zeta_{0}) \times (\tilde{Z},\tilde{\zeta})$ where $G_{0}$ acts trivially on $\tilde{Z}$ and $p$ splits as $p_{0} : (B_{0},\beta_{0}) \to (Z_{0},\zeta_{0})$ and $\tilde{p} : (\tilde{B},\tilde{\beta}) \to (\tilde{Z},\tilde{\zeta})$.

Write $\mathrm{proj}_{0} : \Gamma \to G_{0}$ for the (faithful) projection map.
If $\gamma_{n} \in \Gamma$ such that $\mathrm{proj}_{0}~\gamma_{n} \to e$ in $G_{0}$ then $s(\gamma_{n}x) \to s(x)$ as $G_{0}$ acts continuously on $S(G_{0})$.  Therefore $\gamma_{n}\tilde{\zeta} = \varphi_{*}\gamma_{n}s_{**}\pi_{*}\beta \to \varphi_{*}s_{**}\pi_{*}\beta = \tilde{\zeta}$ whenever $\mathrm{proj}_{0}~\gamma_{n} \to e$ in $G_{0}$.

Let $E$ be a positive measure subset of $\tilde{Z}$.  Lemma \ref{L:pet} gives $\gamma_{n} \in \Gamma$ such that $\mathrm{proj}_{0}~\gamma_{n} \to e$ in $G_{0}$ and $\tilde{\beta}(\gamma_{n}\tilde{p}^{-1}(E)) \to 1$ (taking $U_{n}$ to be a decreasing sequence of open neighborhoods of $e$ and $\epsilon$ to be $1/n$).
Then $\tilde{\zeta}(\gamma_{n}E) \to 1$ but also $\gamma_{n}^{-1}\tilde{\zeta} \to \tilde{\zeta}$ so we conclude that $\tilde{\zeta}(E) = 1$.  So $(\tilde{Z},\tilde{\zeta})$ is trivial, i.e.~$(P(S(G_{0})),s_{**}\pi_{*}\beta)$ is $\Gamma$-isomorphic to the $G_{0}$-space $(Z_{0},\zeta_{0})$.

This means that if $\mathrm{proj}_{0}~\gamma_{n} \to g_{0}$ in $G_{0}$ then $\gamma_{n}\zeta \to g_{0}\zeta$ which is in the same measure class as $\zeta$.  So $g_{0}s_{**}\pi_{*}\beta = g_{0}\varphi_{*}^{-1}\zeta = \lim \gamma_{n} \varphi_{*}^{-1}\zeta = \lim \varphi_{*}^{-1}\gamma_{n}\zeta = \varphi_{*}g_{0}\zeta$ is in the same class as $s_{**}\pi_{*}\beta$.

As $\Gamma$ projects densely to $G_{0}$, this means $g_{0}s_{**}\pi_{*}\beta$ is in the same measure class as $s_{**}\pi_{*}\beta$ for every $g_{0} \in G_{0}$, i.e.~that $s_{**}\pi_{*}\beta$, and therefore also $s_{*}\nu$, is $G_{0}$-quasi-invariant (and in fact stationary) so the result follows using Theorem 3.3 \cite{Cr17}.
\end{proof}

\section{Stabilizers of Lattices in Connected Groups}

\begin{theorem}\label{T:latticeactions}
Let $G = G_{1} \times \cdots \times G_{k}$ be a product of simple connected Lie groups, $k \geq 2$.  Let $\Gamma < G$ be a strongly irreducible lattice.

Let $\Gamma \actson (X,\nu)$ be an ergodic stationary action.

Then either $\stab_{\Gamma}(x)$ projects densely to each simple $G_{k}$ almost everywhere or $\stab_{\Gamma}(x)$ is finite almost everywhere.
\end{theorem}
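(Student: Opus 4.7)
The plan is a dichotomy reduction via the projection theorem (Theorem \ref{T:projectinglatt}). For each $j$, define the $\Gamma$-equivariant map $H_j : X \to S(G_j)$ by $H_j(x) = \overline{\mathrm{proj}_j \stab_\Gamma(x)}$, where $\Gamma$ acts on $S(G_j)$ by conjugation through $\mathrm{proj}_j$. Because $\{x : H_j(x) = G_j\}$ is a $\Gamma$-invariant event, ergodicity yields a zero-one dichotomy per $j$: either $H_j \equiv G_j$ almost surely for every $j$ --- which is the first alternative of the theorem --- or there exists $j_0$ with $H_{j_0}(x) \neq G_{j_0}$ almost surely. After relabeling I assume $H_1(x) \neq G_1$ a.s., and the task reduces to showing $\stab_\Gamma(x)$ is finite a.s.

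Since $k \geq 2$, $G_1$ is a proper subproduct of $G$, so Theorem \ref{T:projectinglatt} applied to $H_1$ produces an ergodic $G_1$-stationary action $(Y_1,\eta_1)$ whose stabilizer distribution equals $(H_1)_*\nu$; the stabilizer SRS of this simple connected Lie group action is therefore a.s.\ a proper closed subgroup. I would then analyze $H_1(x)$ in two stages. First, rule out nontrivial identity component $H_1(x)_0$: by ergodicity it is a.s.\ conjugate to a fixed closed connected subgroup $L \leq G_1$, and if $L \neq \{e\}$ then $(Y_1,\eta_1)$ factors $G_1$-equivariantly through the proper homogeneous space $G_1/N_{G_1}(L)$. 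Pulling this factorization back through $H_1$ and using the intermediate factor theorem for products (Theorem \ref{T:factorprod}) on the induced stationary $G$-space $(Y,\eta) = G \times_\Gamma X$, the strong irreducibility of $\Gamma$ should yield a contradiction. Second, once $H_1(x)$ is discrete a.s., I would pass to the induced $G$-space, whose $G$-stabilizers $f\stab_\Gamma(x)f^{-1}$ have projection closures to each $G_j$ that are conjugate to those of $\stab_\Gamma(x)$, and combine Corollary \ref{C:denseprojs} with Proposition \ref{P:indmp} to transfer information between $(X,\nu)$ and $(Y,\eta)$; the remaining infinite-discrete possibility for $H_1$ should collapse here, forcing $\stab_\Gamma(x)$ to be finite.

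The principal obstacle is ruling out nontrivial \emph{infinite discrete} values of $H_1(x)$: since lattices in $G_1$ themselves produce nontrivial IRSs, this step cannot rest on intrinsic properties of $G_1$ alone and must genuinely exploit the fact that $(H_1)_*\nu$ arises by projecting the stabilizer SRS of a strongly irreducible lattice in a product. The technical crux is therefore likely to involve Corollary \ref{C:projectingSRS} applied back to the $\Gamma$-stabilizers, together with the intermediate factor theorem on the induced action, so as to constrain which $G_1$-SRSs can arise from this construction.
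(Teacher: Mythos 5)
Your reduction is set up correctly --- defining $H_j(x)=\overline{\mathrm{proj}_j\,\stab_\Gamma(x)}$, using ergodicity to get a per-factor zero--one law, and invoking Theorem \ref{T:projectinglatt} to realize $(H_{j_0})_*\nu$ as the stabilizer distribution of an ergodic $G_{j_0}$-space is exactly where the paper starts. But the two steps you flag as needing work are precisely where the substance lies, and your sketches of them do not go through. First, the claim that the identity component $H_1(x)_0$ is ``by ergodicity a.s.\ conjugate to a fixed closed connected subgroup $L$'' is unjustified: the conjugation action on the space of closed connected subgroups is not countably separated in any way that ergodicity can exploit, and no such rigidity is available for a stationary random subgroup. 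The paper avoids this entirely. It shows (Lemma \ref{L:HM}) that if a noncompact subgroup $Q$ fixes a positive-measure set in a quasi-invariant action of a Howe--Moore group, then that set is fixed by all of $G$; consequently (Lemma \ref{L:dim}) either some connected component is all of $G_j$, or for a.e.\ \emph{pair} of points the components differ, so passing to the diagonal action on $Y_j\times Y_j$ strictly drops the dimension of the component. Iterating gives a finite power $Y_j^m$ on which stabilizers are a.e.\ discrete or compact (Proposition \ref{P:cpt}). The price is that one must then work with the diagonal action $\Gamma\actson X^m$ and at the end convert statements about $\stab_\Gamma(x_1,\dots,x_m)$ back to statements about $\stab_\Gamma(x)$; your outline never leaves $X$ itself.

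Second, the infinite-discrete case, which you correctly identify as the crux but leave open, is resolved not by Corollary \ref{C:projectingSRS} plus the intermediate factor theorem, but by Proposition \ref{P:discrete}: if the $G_j$-stabilizers of $Y_j^m$ are discrete a.e., then for any nontrivial $\lambda$ in the (countable, faithfully projected) lattice fixing a positive-measure set, conjugates $g_n\lambda g_n^{-1}\neq\lambda$ with $g_n\to e$ also lie in the stabilizer of a positive-measure set of points, contradicting discreteness; hence $\mathrm{proj}_j\,\stab_\Gamma(x_1,\dots,x_m)=\{e\}$ and faithfulness kills the stabilizer. The remaining branch --- all projections compact --- is also absent from your plan: there the paper combines Lemma \ref{L:topotrick} and Lemma \ref{L:srssupp} to handle the factors where the projected space is trivial, and in the all-compact case concludes only that $\stab_\Gamma(x)$ is \emph{locally finite} (every finitely generated subgroup is discrete and contained in a compact group, hence finite); finiteness then requires a separate argument via Selberg's Lemma and Proposition \ref{P:locfin}. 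Your proposal, as written, would at best establish the dichotomy with ``locally finite'' in place of ``finite'' even if the earlier gaps were filled, and the two gaps it acknowledges are exactly the points where the Howe--Moore property and the diagonal-power trick must be introduced.
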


The proof will occupy this section.  We begin with an observation of Vershik \cite{Vershik}:

\begin{proposition}\label{P:fixed}
Let $\Lambda$ be a countable group and $\Lambda \actson (X,\nu)$ a quasi-invariant action.

For $\lambda \in \Lambda$ and for a subgroup $Q < \Lambda$ write
\[
\mathrm{Fix}~\lambda = \{ x : \lambda x = x \} \quad\quad\quad\quad
\mathrm{Fix}~Q = \{ x : qx = x \text{ for all } q \in Q \}
\]

For almost every $x$ the stabilizer $\stab(x)$ satisfies:
$\stab(x) \subseteq \{ \lambda : \nu(\mathrm{Fix}~\lambda) > 0 \}$.
In particular, if $\nu(\mathrm{Fix}~\lambda) = 0$ for all $\lambda \ne e$ then the action is essentially free.
\end{proposition}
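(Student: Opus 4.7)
The statement is essentially a countability observation, so the plan is quite short. The key identity to start from is that for each fixed $\lambda \in \Lambda$, the set $\{x : \lambda \in \stab(x)\}$ is exactly $\mathrm{Fix}~\lambda$. So membership of $\lambda$ in $\stab(x)$ for a positive-measure set of $x$ is precisely the condition $\nu(\mathrm{Fix}~\lambda) > 0$.

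The plan is to define $N = \{\lambda \in \Lambda : \nu(\mathrm{Fix}~\lambda) = 0\}$ and form the set $E = \bigcup_{\lambda \in N} \mathrm{Fix}~\lambda$. Since $\Lambda$ is countable, $N$ is countable, and $E$ is a countable union of $\nu$-null sets, hence $\nu(E) = 0$. For every $x \notin E$ and every $\lambda \in N$ we have $\lambda x \ne x$, which is to say $\stab(x) \cap N = \emptyset$, equivalently $\stab(x) \subseteq \Lambda \setminus N = \{\lambda : \nu(\mathrm{Fix}~\lambda) > 0\}$. This gives the containment for almost every $x$.

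The "in particular" clause is immediate from what has been shown: if $\nu(\mathrm{Fix}~\lambda) = 0$ for all $\lambda \ne e$, then $N = \Lambda \setminus \{e\}$, so $\stab(x) \subseteq \{e\}$ almost surely, which is the definition of the action being essentially free.

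There is no real obstacle here; the entire argument is a one-line application of countable subadditivity of $\nu$ together with the tautological identification of $\mathrm{Fix}~\lambda$ with $\{x : \lambda \in \stab(x)\}$. The only thing worth noting for the writeup is that countability of $\Lambda$ is used essentially, which matches the hypothesis of the proposition.
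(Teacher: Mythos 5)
Your argument is correct and is identical to the paper's: you form the same countable union of null fixed-point sets (your $E$ is the paper's $Z$, your $\Lambda \setminus N$ is the paper's $L$) and conclude by countable subadditivity. Nothing to add.
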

\begin{proof}
Write $L = \{ \lambda : \nu(\mathrm{Fix}~\lambda) > 0 \}$.  Let $Z = \bigcup_{\lambda \notin L} \mathrm{Fix}~\lambda$.  Then $Z$ is a countable union of measure zero sets hence is measure zero.  For all $x \notin Z$ we then have $\stab(x) \subseteq L$ since $x \notin \mathrm{Fix}~\lambda$ for $\lambda \notin L$ meaning that $\lambda x \ne x$ for $\lambda \notin L$.
\end{proof}

\subsection{Howe-Moore Groups}

Recall that a group is Howe-Moore when for every unitary representation without invariant vectors, the matrix coefficients vanish at infinity \cite{HM79}.

\begin{lemma}\label{L:HM}
Let $G$ be a locally compact second countable group with the Howe-Moore property and $G \actson (Y,\eta)$ a quasi-invariant action on a probability space.

If there exists a subgroup $Q$ with $\eta(\mathrm{Fix}~Q) > 0$ and $\overline{Q}$ noncompact then $\mathrm{Fix}~Q = \mathrm{Fix}~G$, i.e.~almost every point in $\mathrm{Fix}~Q$ is fixed by all of $G$.
\end{lemma}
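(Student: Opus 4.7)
The plan is to view Lemma \ref{L:HM} as a double application of Howe--Moore to the Koopman representation $\pi$ of $G$ on $L^{2}(Y,\eta)$ (made unitary via the square root of the Radon--Nikodym cocycle $\rho(g,\cdot)$). Write $A=\mathrm{Fix}~Q$. I first want to upgrade ``$Q$ fixes $A$ pointwise'' to ``$G$ preserves $A$ setwise and acts on $(A,\eta|_{A})$ in a measure-preserving way,'' and then use a second round of Howe--Moore on $L^{2}(A,\eta|_{A})$ to upgrade setwise invariance all the way to pointwise fixation.

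First I would observe that because every $q\in Q$ fixes $A$ pointwise, the pushforward $q_{*}\eta$ agrees with $\eta$ on every Borel subset of $A$, so the cocycle $\rho(q,\cdot)$ equals $1$ on $A$. A short direct calculation then yields the honest equality $\pi(q)\bbone_{A}=\bbone_{A}$ in $L^{2}(Y,\eta)$, for every $q\in Q$. Orthogonally decompose $\bbone_{A}=v_{0}+v_{1}$ with $v_{0}\in L^{2}(Y,\eta)^{G}$ and $v_{1}$ in the complement of the $G$-invariants; $G$-equivariance of the projection forces $\pi(q)v_{1}=v_{1}$ as well. Picking $q_{n}\in Q$ escaping every compact subset of $G$ (possible since $\overline{Q}$ is noncompact), Howe--Moore gives $\langle\pi(q_{n})v_{1},v_{1}\rangle\to 0$, while $Q$-fixation makes this inner product constantly $\|v_{1}\|^{2}$. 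Hence $v_{1}=0$, so $\pi(g)\bbone_{A}=\bbone_{A}$ for all $g\in G$; reading this equation pointwise gives simultaneously $gA=A$ mod null and $\rho(g,\cdot)=1$ on $A$, so the $G$-action restricted to $(A,\eta|_{A})$ is genuinely measure-preserving.

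Next I would apply Howe--Moore again, this time to the (now unitary and measure-preserving) Koopman representation of $G$ on $L^{2}(A,\eta|_{A})$. Because $Q$ fixes $A$ pointwise, $\pi|_{Q}$ acts as the identity on this Hilbert space, so for every $v\perp L^{2}(A,\eta|_{A})^{G}$ we have $\langle\pi(q_{n})v,v\rangle=\|v\|^{2}$ for all $n$ and $\langle\pi(q_{n})v,v\rangle\to 0$ by Howe--Moore, forcing $v=0$. Therefore $L^{2}(A,\eta|_{A})=L^{2}(A,\eta|_{A})^{G}$, which means every measurable subset of $A$ is $G$-invariant mod null, i.e.\ $G$ fixes almost every point of $A$. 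This yields $\mathrm{Fix}~Q\subseteq\mathrm{Fix}~G$ mod null, and the reverse inclusion is automatic.

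The hard part is purely the bookkeeping in the quasi-invariant setting: one has to take care with the Radon--Nikodym cocycle to justify the exact equality $\pi(q)\bbone_{A}=\bbone_{A}$ (not merely the weaker matrix-coefficient identity $\langle\pi(q)\bbone_{A},\bbone_{A}\rangle=\eta(A)$), so that the decomposition argument really does hit a $Q$-fixed vector in the orthogonal complement of the $G$-invariants. Once this regularity point about $\rho(q,\cdot)$ on $A$ is settled, both applications of Howe--Moore are completely formal.
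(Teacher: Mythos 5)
Your proposal is correct and follows essentially the same route as the paper: the key point in both is that the Radon--Nikodym cocycle equals $1$ on $\mathrm{Fix}~Q$ for elements of $Q$, so indicators of subsets of $\mathrm{Fix}~Q$ are genuine $\pi(Q)$-fixed vectors, and Howe--Moore applied along a sequence in $Q$ leaving compact sets kills their components orthogonal to the $G$-invariants. The only cosmetic difference is that the paper runs this argument once for each positive-measure $E\subseteq\mathrm{Fix}~Q$ directly in $L^{2}(Y,\eta)$ (using a pointwise squaring identity to handle the cocycle), whereas you run it once for $\mathrm{Fix}~Q$ itself and then a second time on $L^{2}(\mathrm{Fix}~Q,\eta|_{\mathrm{Fix}~Q})$; both reduce to the same final step that a group fixing every measurable set mod null fixes almost every point.
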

\begin{proof}
First, observe that for any $f \in L^{2}(Y,\eta)$ and $q \in Q$,
\[
\int_{\mathrm{Fix}~Q} f(qy)~d\eta(y) = \int_{\mathrm{Fix}~Q} f(y)~d\eta(y)
\]
and therefore $\frac{dq\eta}{d\eta}(y) = 1$ for $y \in \mathrm{Fix}~Q$ and $q \in Q$.

Let $\pi$ be the unitary representation of $G$ on $L^{2}(Y,\eta)$: $\pi(g)f(y) = f(g^{-1}y) \sqrt{\frac{dg\eta}{d\eta}(y)}$.  Let $P$ be the orthogonal projection from $L^{2}(Y,\eta)$ to the closed $G$-invariant subspace $\mathcal{I}$ of $\pi$-invariant vectors.  Then $\pi$ restricted to $L^{2}(Y,\eta) \ominus \mathcal{I}$ has no invariant vectors.

Let $f$ be the characteristic function of some positive measure subset $E \subseteq \mathrm{Fix}~Q$.  Then $f - Pf \in L^{2}(Y,\eta) \ominus \mathcal{I}$ so, as $G$ has the Howe-Moore property,
\[
\langle \pi(g)(f - Pf), (f - Pf) \rangle \to 0 \quad\quad\quad\quad \text{as $g$ leaves compact sets}
\]

For $q \in Q$ and $y \in E \subseteq \mathrm{Fix}~Q$ we have that $\pi(q)f(y) = f(q^{-1}y)\sqrt{\frac{dq\eta}{d\eta}(y)} = 1 = f(y)$.  For $y \notin E$ we have that $qy \notin E$ (since $qy \in E$ gives $q^{-1}qy = qy$) so $\pi(q)f(y) = f(q^{-1}y)\sqrt{\frac{dq\eta}{d\eta}(y)} = 0 = f(y)$.

As $Pf$ is $\pi$-invariant then
$\langle \pi(q)(f - Pf),(f - Pf) \rangle = \| f - Pf \|^{2}$.  
As $\overline{Q}$ is noncompact this means that $\| f - Pf \| = 0$, i.e.~that $f$ is in fact $\pi$ invariant.

Suppose $E$ is not a $G$-invariant set.  Then there exists $g \in G$ with $\eta(gE \setminus E) > 0$.  Writing $\bbone_{E}$ for the characteristic function
\[
\bbone_{E}(y) = (\bbone_{E}(y))^{2} = (f(y))^{2} = \big{(}\pi(g)f(y)\big{)}^{2} = \bbone_{gE}(y) \frac{dg\eta}{d\eta}(y)
\]
Therefore for $y \in gE \setminus E$ we have $0 = \frac{dg\eta}{d\eta}(y)$ contradicting that the $G$ action is quasi-invariant.  Therefore $E$ is $G$-invariant as a set.

Since every subset of $\mathrm{Fix}~Q$ is a $G$-invariant set, $G$ fixes almost every point in it.
\end{proof}

\subsection{Connected Lie Groups}

\begin{lemma}\label{L:dim}
Let $G$ be a simple connected Lie group and $G \actson (Z,\zeta)$ be a quasi-invariant (not necessarily ergodic) action.  Let $C(z)$ be the connected component of $\stab(z)$.  Let $d \geq 0$ be a nonnegative integer.

If $\mathrm{dim}~C(z) \leq d$ almost everywhere then one of the following holds:
\begin{itemize}
\item $\stab(z) = G$ on a positive measure set
\item $\stab(z)$ is compact almost everywhere
\item $\stab(z)$ is discrete almost everywhere
\item the diagonal action $G \actson (Z \times Z, \zeta \times \zeta)$ has $\mathrm{dim}~C(z_{1},z_{2}) \leq d-1$ for almost every $z_{1},z_{2} \in Z$ which have noncompact and nondiscrete stabilizers
\end{itemize}
\end{lemma}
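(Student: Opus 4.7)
The strategy is to show the implication ``(i) fails $\Rightarrow$ (iv) holds''; cases (ii) and (iii) are absorbed as trivial alternatives (for instance, if $G$ itself is compact then (ii) holds automatically, and if $d=0$ then $\dim C(z)\le 0$ a.e.\ forces $\stab(z)$ discrete a.e., giving (iii)). Set $E_1 = \{z : \stab(z)\text{ is noncompact and nondiscrete}\}$; if $\zeta(E_1)=0$ then (iv) is vacuous, so assume $\zeta(E_1)>0$. I would first reformulate (iv) as an atomicity statement. On $E_1\times E_1$ the condition $\dim C(z_1,z_2)=d$ forces $\dim C(z_i)=d$ for $i=1,2$ and, since the identity component of $\stab(z_1)\cap \stab(z_2)$ is a connected closed subgroup of dimension $d$ sitting inside each of the $d$-dimensional connected groups $C(z_1),C(z_2)$, forces $C(z_1)=C(z_2)$. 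A standard Fubini argument on the diagonal of the subgroup space then identifies positive $\zeta\times\zeta$-measure of such pairs with atoms of the pushforward of $z\mapsto C(z)$ (viewed as a Borel map into the standard Borel space of closed connected subgroups of $G$) restricted to $E_1\cap\{\dim C=d\}$.

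Suppose, toward contradiction, that (iv) and (i) both fail. By the above there is a $d$-dimensional closed connected subgroup $H_0\le G$ with $E_0:=\{z\in E_1 : C(z)=H_0\}$ of positive $\zeta$-measure. If $H_0=G$ then $\stab(z)\supseteq G$ on $E_0$, giving (i), a contradiction. So $H_0\lneq G$ is a proper, positive-dimensional, connected closed subgroup. I would now run a disjoint-coset argument: $G$-equivariance of the stabilizer map gives $gE_0 = \{z : C(z)=gH_0g^{-1}\}$ for every $g\in G$, quasi-invariance gives $\zeta(gE_0)>0$, and $gE_0\cap g'E_0\ne\emptyset$ only when $g^{-1}g'\in N_G(H_0)$. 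Because $G$ is simple and connected, no proper positive-dimensional closed connected subgroup of $G$ is normal; in particular $H_0$ is not normal, so $N_G(H_0)\lneq G$, and connectedness of $G$ then forces $\dim N_G(H_0)<\dim G$ (an open-closed subgroup of a connected group is the whole group). Hence $G/N_G(H_0)$ is a positive-dimensional, and thus uncountable, homogeneous space. Choosing coset representatives produces an uncountable family of pairwise disjoint sets, each of positive $\zeta$-measure in the probability space $(Z,\zeta)$, which is impossible. This contradiction proves (iv).

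The main obstacle is the Fubini reduction to an atom: one must set up the Borel space of closed connected subgroups of $G$ carefully (for example via the Chabauty topology, or by passing to the Grassmannian of Lie subalgebras of $\mathfrak{g}$), and one must cleanly justify the implication $\dim C(z_1,z_2)=d\Rightarrow C(z_1)=C(z_2)$ through the fact that a connected Lie subgroup is determined by its Lie algebra together with the containment of $d$-dimensional connected groups inside one another. Once the atom is obtained, the structural contradiction from simplicity of $G$ is essentially immediate. An alternative route for the noncompact branch of $H_0$ is to invoke Lemma \ref{L:HM} directly: Howe-Moore gives $\mathrm{Fix}(H_0)\subseteq \mathrm{Fix}(G)$ up to null, so $\zeta(E_0)>0$ already forces $\stab(z)=G$ on a positive-measure set and contradicts failure of (i); the disjoint-coset argument has the advantage of treating compact and noncompact $H_0$ uniformly at the modest cost of the explicit coset-counting contradiction.
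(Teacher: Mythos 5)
Your proposal is correct, and it diverges from the paper at the decisive step. Both arguments begin the same way: negate the fourth alternative, observe that a positive-measure set of pairs with $\dim C(z_1,z_2)=d$ forces $C(z_1)=C(z_2)$ (a closed connected $d$-dimensional subgroup of a connected $d$-dimensional Lie group is the whole group), and apply Fubini to extract a single subgroup $Q=H_0$ with $\zeta(\mathrm{Fix}~H_0)>0$. From there the paper invokes the Howe--Moore Lemma \ref{L:HM}, which requires $\overline{Q}$ \emph{noncompact}, to conclude $\mathrm{Fix}~H_0=\mathrm{Fix}~G$ and land in the first alternative; the case of a positive-measure set of pairs sharing a common \emph{compact} positive-dimensional identity component is not addressed there (the paper's subsequent assertion that $C(x)\ne C(y)$ for a.e.\ pair in $M$ only follows from the noncompact branch). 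You instead argue structurally from simplicity: $H_0$ proper, connected, positive-dimensional cannot be normal, its normalizer is a proper closed subgroup of strictly smaller dimension, so the pairwise disjoint translates $gE_0=\{z: C(z)=gH_0g^{-1}\}$ form an uncountable family of positive-measure sets in a probability space --- a contradiction. This buys uniformity (compact and noncompact $H_0$ are treated identically, with Howe--Moore available as a shortcut in the noncompact branch, as you note) and in fact closes the small gap left by the paper's reliance on Lemma \ref{L:HM}; the cost is only the bookkeeping you already flag, namely setting up the Borel structure on closed connected subgroups (Chabauty topology or the Grassmannian of Lie subalgebras) so that $z\mapsto C(z)$ is measurable and the Fubini/atom extraction is licit --- a point the paper also takes for granted.
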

\begin{proof}
Suppose $\zeta \times \zeta ( \{ (x,y) \in Z \times Z : C(x) = C(y) \text{ are noncompact} \} > 0$.  Fubini's theorem tells us that for some $x$ there is a positive measure set of $y$ so that $\zeta(\{ x : C(x) = C(y) \}) > 0$.  This means $Q = C(y)$ is a noncompact closed subgroup of $G$ with $\zeta(\mathrm{Fix}~Q) > 0$.  Lemma \ref{L:HM} then says that $\mathrm{Fix}~Q = \mathrm{Fix}~G$ so in fact $C(x) = G$ on that positive measure set and we are in the first case.

Proceed now assuming we are not in the first case.  Let
\[
M = \{ z \in Z : \stab(z) \text{ is not compact and not discrete } \}
\]
For almost every $x,y \in M$ we have $C(x) \ne C(y)$ by the above and that $\mathrm{dim}~C(x) > 0$ and $\mathrm{dim}~C(y) > 0$ since they are nondiscrete.  Then $\mathrm{dim}~C(x) \cap C(y) < \mathrm{dim}~C(x) \leq d$.
\end{proof}

\begin{proposition}\label{P:cpt}
Let $G$ be a connected simple Lie group.  Let $G \actson (Y,\eta)$ be an ergodic nontrivial $G$-space.

Then there exists a positive integer $m$ such that the diagonal action $G \actson (Y^{m},\eta^{m})$ has the property that for almost every $z \in Y^{m}$ the stabilizer subgroup $\stab(z)$ is discrete or compact.
\end{proposition}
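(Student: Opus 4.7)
The plan is to iterate Lemma \ref{L:dim} on successively larger diagonal powers of $Y$, steadily driving down the essential dimension of the connected component of the stabilizer on the locus where that stabilizer is neither compact nor discrete, until this locus is forced to be null. For each $k \geq 0$ let $M_k \subseteq Y^{2^k}$ denote the $G$-invariant subset of points whose stabilizer is both noncompact and nondiscrete, and let $d_k$ denote the essential supremum of $\dim C(z)$ over $z \in M_k$; it suffices to exhibit a $k$ with $\eta^{2^k}(M_k) = 0$, since this gives the proposition with $m = 2^k$.

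The induction rests on the nesting observation $M_{k+1} \subseteq M_k \times M_k \subseteq Y^{2^{k+1}}$, which holds because the stabilizer of a pair is $\stab(z) \cap \stab(w)$ and this intersection can fail to be compact (resp.~discrete) only if neither factor is compact (resp.~discrete). Assuming inductively $\eta^{2^k}(M_k) > 0$, I would apply Lemma \ref{L:dim} to $G \actson (M_k, \eta^{2^k}|_{M_k})$ (normalized to a probability) with dimension bound $d_k$. The lemma's second and third alternatives (compact, resp.~discrete, stabilizers almost everywhere) directly contradict the definition of $M_k$ and therefore force $\eta^{2^k}(M_k) = 0$, giving the proposition with $m = 2^k$. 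The first alternative is ruled out by a Fubini argument: a positive-measure subset of $M_k \subseteq Y^{2^k}$ on which $\stab = G$ forces the $G$-invariant set $E = \{y \in Y : \stab(y) = G\}$ to have positive $\eta$-measure, and ergodicity together with nontriviality of $Y$ then yield a contradiction. Thus the fourth alternative holds, giving $\dim C \leq d_k - 1$ on $M_k \times M_k$ and hence, by the nesting inclusion, on $M_{k+1}$, so $d_{k+1} \leq d_k - 1$.

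Since every $z \in M_k$ has nondiscrete stabilizer, $\dim C(z) \geq 1$, so $M_k$ nonempty forces $d_k \geq 1$. Combined with $d_k \leq \dim G - k$ from the induction, this forces termination at some $k \leq \dim G$, so $m = 2^{\dim G}$ always works. The substantive content — the dichotomy between $G$ fixing a positive-measure set and a strict dimension drop under diagonalization — is already packaged inside Lemma \ref{L:dim} via its Howe-Moore input (Lemma \ref{L:HM}); the main thing to watch in the present argument is that the iteratively chosen subspace must be $M_k$ rather than all of $Y^{2^k}$ (since only on $M_k$ is the dimension hypothesis of Lemma \ref{L:dim} under control), and fortunately $M_k$ is $G$-invariant with a quasi-invariant restricted measure, so the lemma may be reapplied cleanly at each step.
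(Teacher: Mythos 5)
Your proof is correct and follows essentially the same route as the paper: iterate Lemma \ref{L:dim} on diagonal powers, rule out the alternative $\stab = G$ by Fubini plus ergodicity and nontriviality, and use the strict dimension drop to terminate after at most $\dim G$ doublings. Your explicit tracking of the invariant locus $M_k$ and the nesting $M_{k+1} \subseteq M_k \times M_k$ is a slightly tidier bookkeeping of the same argument the paper runs directly on $(Y^{2^t},\eta^{2^t})$.
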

\begin{proof}
Write $C(y)$ for the connected component of $\stab(y)$.  Set $d = \sup~\mathrm{dim}~C(y) \leq \mathrm{dim}~G$.

Suppose $\eta(\{ y \in Y : \stab(y) \text{ is not compact and not discrete} \}) > 0$.  Apply Lemma \ref{L:dim} to conclude that either $\eta(\{ y  \in Y : \stab(y) = G \}) > 0$ or that $G \actson (Y^{2},\eta^{2})$ has the property that for $\eta\times\eta$-almost every $y_{2} \in Y^{2}$ it holds that $\stab(y_{2})$ is at least one of compact, discrete, all of $G$, or has dimension $\leq d-1$.  Repeating this process by applying Lemma \ref{L:dim} to $(Y^{2},\eta^{2})$, since $d$ is finite, we conclude that there is some finite number $t$ of applications so that for $m = 2^{t}$ we have that the diagonal action $G \actson (Y^{m},\eta^{m})$ has the property that almost every stabilizer is at least one of compact, discrete or all of $G$.

If $\eta^{m}(\{y_{m} \in Y^{m} : \stab(y_{m}) = G \}) > 0$ then, writing $y_{m} = (x_{1},\ldots,x_{m})$ for $x_{j} \in Y$, we have $G = \stab(y_{m}) = \stab(x_{1}) \cap \stab(x_{2}) \cap \ldots \cap \stab(x_{m})$ so $\stab(x) = G$ for a positive $\eta$-measure set of $x \in Y$.  By ergodicity then this holds almost everywhere but that contradicts that $(Y,\eta)$ is nontrivial.
\end{proof}

\begin{proposition}\label{P:discrete}
Let $G$ be a semisimple connected Lie group with trivial center and $\Lambda < G$ a countable subgroup and $G \actson (Y,\eta)$ a quasi-invariant action.

If the $G$-stabilizers are discrete almost everywhere then $\Lambda \actson (Y,\eta)$ is essentially free.
\end{proposition}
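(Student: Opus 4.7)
My plan is to apply Proposition~\ref{P:fixed}: since $\Lambda$ is countable, essential freeness follows once I show $\eta(\mathrm{Fix}~\lambda) = 0$ for each $\lambda \in \Lambda \setminus \{e\}$. I will in fact establish this for every $\lambda \in G \setminus \{e\}$, by a direct Fubini argument on $G \times Y$ against the $\sigma$-finite product $\mathrm{Haar}_{G} \times \eta$. No Howe-Moore input (Lemma~\ref{L:HM}) is required.

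Fix $\lambda \ne e$ and consider the Borel set
\[
T = \{(g,y) \in G \times Y : gy \in \mathrm{Fix}~\lambda\} \subseteq G \times Y.
\]
I compute $(\mathrm{Haar}_{G} \times \eta)(T)$ in two ways. Slicing by $y$, the condition $gy \in \mathrm{Fix}~\lambda$ rearranges to $g^{-1}\lambda g \in \stab_{G}(y)$, so $T_{y} = \varphi_{\lambda}^{-1}(\stab_{G}(y))$ where $\varphi_{\lambda}(g) = g^{-1}\lambda g$. For almost every $y$ the stabilizer $\stab_{G}(y)$ is discrete and closed, hence countable, while each non-empty fibre of $\varphi_{\lambda}$ is a right coset of the centralizer $Z_{G}(\lambda)$. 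Because $G$ is connected with trivial centre, the adjoint representation is faithful, so $\mathrm{Ad}(\lambda) \ne \mathrm{id}$ and the Lie algebra of $Z_{G}(\lambda)$ is strictly smaller than $\mathfrak{g}$. Hence $Z_{G}(\lambda)$ is a proper closed subgroup of strictly smaller dimension and is therefore $\mathrm{Haar}_{G}$-null. Then $T_{y}$ is a countable union of Haar-null sets, $\mathrm{Haar}_{G}(T_{y}) = 0$ for a.e.~$y$, and Tonelli yields $(\mathrm{Haar}_{G} \times \eta)(T) = 0$.

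Slicing by $g$, I have $T^{g} = g^{-1}\mathrm{Fix}~\lambda$, and quasi-invariance of $\eta$ makes $\eta(g^{-1}\mathrm{Fix}~\lambda) = 0$ equivalent to $\eta(\mathrm{Fix}~\lambda) = 0$. Combining this with $(\mathrm{Haar}_{G} \times \eta)(T) = 0$, which forces $\eta(T^{g}) = 0$ for Haar-a.e.~$g$, gives $\eta(\mathrm{Fix}~\lambda) = 0$, completing the proof.

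The main delicacy is the codimension claim: a proper closed subgroup of a connected Lie group must have strictly smaller dimension, so it is Haar-null. This in turn reduces to faithfulness of $\mathrm{Ad}$ for a connected centre-free Lie group. Notably the argument uses only connectedness and triviality of the centre of $G$, not semisimplicity; semisimplicity enters the surrounding results but plays no role here.
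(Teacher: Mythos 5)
Your proof is correct, and it takes a genuinely different route from the paper's. The paper argues by contradiction: assuming $\eta(\mathrm{Fix}\,\lambda)>0$ for some $\lambda\ne e$, it chooses $g_{n}\to e$ with $g_{n}\lambda g_{n}^{-1}\ne\lambda$ (possible because the centralizer of a noncentral element of a connected group cannot contain a neighborhood of the identity), uses continuity of the action on the measure algebra to produce a positive-measure set of $y$ fixed simultaneously by $\lambda$ and by every $g_{n}\lambda g_{n}^{-1}$, and then the convergence $g_{n}\lambda g_{n}^{-1}\to\lambda$ of distinct elements inside $\stab_{G}(y)$ contradicts discreteness. Your Fubini computation on $G\times Y$ instead establishes $\eta(\mathrm{Fix}\,\lambda)=0$ directly, trading the paper's qualitative input (the centralizer $Z_{G}(\lambda)$ is not open) for a quantitative one (it is Haar-null, because the fixed subspace of $\mathrm{Ad}(\lambda)$ is proper); both inputs reduce to faithfulness of $\mathrm{Ad}$ on a connected centre-free group, so, as you observe, neither argument really uses semisimplicity. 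Your route requires a bit more bookkeeping — Borel measurability of $T$, and the fact that a discrete subgroup of a locally compact second countable group is closed and hence countable, so that $T_{y}$ is a countable union of null cosets — but it avoids the measure-algebra continuity and the subsequence extraction, and it yields the slightly stronger conclusion that $\mathrm{Fix}\,g$ is null for every $g\ne e$ in $G$, not just for elements of the countable subgroup $\Lambda$. Both proofs conclude via Proposition \ref{P:fixed}.
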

\begin{proof}
Suppose the $\Lambda$-action is not essentially free.  Then there exists $\lambda \in \Lambda$, $\lambda \ne e$, with $\nu(\mathrm{Fix}~\lambda) > 0$.  Since $G$ is connected and $\lambda \notin Z(G)$, the centralizer subgroup of $\lambda$ does not contain an open neighborhood the identity in $G$.  So there exists $g_{n} \to e$ in $G$ such that $g_{n}\lambda g_{n}^{-1} \ne \lambda$ for all $n$.  Note that, writing $E = \mathrm{Fix}~\lambda$, we have $\eta(g_{n}E \symdiff E) \to 0$.  Take a subsequence along which $\eta(g_{n}E \symdiff E) < 2^{-n-1} \eta(E)$.  Then
\[
\eta(E \cap \bigcap_{n} g_{n}E) = \eta(E) - \eta(E \symdiff \bigcap_{n} g_{n}E) \geq \eta(E) - \sum_{n=1}^{\infty} \eta(E \symdiff g_{n}E) > \frac{1}{2}\eta(E) > 0
\]
For $y \in E \cap (\cap_{n} g_{n}E) = \mathrm{Fix}~\lambda \cap \cap_{n} \mathrm{Fix}~g_{n}\lambda g_{n}^{-1}$ we have that $\lambda y = y$ and $g_{n}\lambda g_{n}^{-1} y = y$ for all $n$, hence $g_{n}\lambda g_{n}^{-1} \in \mathrm{stab}_{G}(y)$ and $\lambda \in \mathrm{stab}_{G}(y)$.  But $g_{n}\lambda g_{n}^{-1} \ne \lambda$ and $g_{n}\lambda g_{n}^{-1} \to \lambda$ contradicting that $\mathrm{stab}_{G}(y)$ is discrete almost everywhere.
\end{proof}

\subsection{Miscellany}

\begin{lemma}\label{L:topotrick}
Let $G$ be a locally compact second countable group and $K$ a compact group.  Let $Q < G \times K$ be a closed discrete subgroup.  Then the projection of $Q$ to $G$ is discrete in $G$.
\end{lemma}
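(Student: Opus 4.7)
The plan is to argue by contradiction, exploiting compactness of $K$. Suppose $\pi_{G}(Q)$ is not discrete in $G$. Since $\pi_{G}(Q)$ is a subgroup, non-discreteness is equivalent to the identity not being isolated in it, so there exist $h_{n} \in \pi_{G}(Q)$ with $h_{n} \ne e$ and $h_{n} \to e$ in $G$. For each $n$ choose a lift $q_{n} = (h_{n}, k_{n}) \in Q$; note that the $G$-coordinate of $q_{n}$ is nontrivial, so $q_{n} \ne (e, k)$ for any $k \in K$.

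The key step is to use compactness of $K$ to extract a convergent subnet $k_{n_{\alpha}} \to k_{\infty} \in K$. Then $q_{n_{\alpha}} = (h_{n_{\alpha}}, k_{n_{\alpha}}) \to (e, k_{\infty})$ in $G \times K$. Closedness of $Q$ forces $(e, k_{\infty}) \in Q$, but every $q_{n_{\alpha}}$ differs from $(e, k_{\infty})$ in its $G$-coordinate, so $(e, k_{\infty})$ is an accumulation point of $Q$ inside itself. This contradicts the assumption that $Q$ is a discrete subset of $G \times K$, completing the proof.

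There is no real obstacle here; the only mildly subtle point is that in a general compact group one may need a convergent subnet rather than a convergent subsequence, since $K$ is not assumed metrizable. In the applications in this paper $K$ is second countable, so one can work with honest sequences throughout.
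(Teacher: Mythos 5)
Your argument is correct. Non-discreteness of $\pi_G(Q)$ gives nontrivial $h_n \to e$ in $\pi_G(Q)$ (the reduction to the identity being isolated is valid since translations are homeomorphisms preserving the subgroup), lifting and passing to a convergent subnet in the compact factor produces a net in $Q$ converging to $(e,k_\infty)$ with every term differing from the limit in its $G$-coordinate, and closedness plus discreteness of $Q$ then yield the contradiction. The route is genuinely different from the paper's, which argues directly rather than by contradiction: for any open $V \subseteq G$ with compact closure, $Q \cap (\overline{V} \times K)$ is closed in a compact set and discrete, hence finite, so $\pi_G(Q) \cap V$ is finite and discreteness follows. The paper's version buys slightly more --- local finiteness of the projection, i.e.\ finiteness of its intersection with every relatively compact open set --- while yours isolates exactly the accumulation-point obstruction and, as you note, works verbatim for non-metrizable $K$ once subnets replace subsequences (and would drop second countability of $G$ as well if you used nets there too). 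Both hinge on the same mechanism: compactness of $K$ prevents the $K$-coordinates of elements of $Q$ from escaping, so degeneration in the $G$-coordinate forces degeneration in $Q$ itself.
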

\begin{proof}
Let $V$ be an open set in $G$ with compact closure.  Then $Q \cap \overline{V} \times K$ is finite since it is closed, hence compact, and discrete.  So $Q \cap V \times K$ is finite meaning that $(\mathrm{proj}_{G}~Q) \cap V$ is finite.  So for every $q \in \mathrm{proj}_{G}~Q$ we can find an open $U$ with $U \cap \mathrm{proj}_{G}~Q = \{ q \}$.
\end{proof}

\begin{lemma}\label{L:srssupp}
Let $H$ be a group and $L < H$ a subgroup.  If $\tau$ is a random subgroup of $H$ that is supported on subgroups of $L$ then $\tau$ is supported on subgroups of $N$ for some $N \normal H$ with $N \subseteq L$.
\end{lemma}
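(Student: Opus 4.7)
The plan is to take $N$ to be the subgroup of $H$ generated by all subgroups in the support of $\tau$: set $N = \langle \bigcup_{S \in \mathrm{supp}(\tau)} S \rangle$. This candidate is essentially forced on us, since any normal subgroup of $H$ witnessing the conclusion must contain every $S \in \mathrm{supp}(\tau)$ and hence must contain $N$; the proof therefore reduces to verifying that $N$ itself does the job.

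I would carry out three short checks in order. First, $N \subseteq L$: the hypothesis says $\tau(\{S : S \subseteq L\}) = 1$, and since $\{S \in S(H) : S \subseteq L\}$ is closed in the Chabauty topology (assuming $L$ is closed, as is standard in the random-subgroup framework), it contains $\mathrm{supp}(\tau)$. Every generator of $N$ therefore lies in $L$, and since $L$ is a subgroup we conclude $N \subseteq L$. Second, $N \normal H$: $H$-invariance of $\tau$ (or, in the $\mu$-stationary setting, invariance of its support under the closed semigroup generated by $\mathrm{supp}(\mu)$, which equals $H$ for admissible $\mu$) gives $h\,\mathrm{supp}(\tau)\,h^{-1} = \mathrm{supp}(\tau)$ for every $h \in H$, so
\[
hNh^{-1} = \Bigl\langle \bigcup_{S \in \mathrm{supp}(\tau)} hSh^{-1} \Bigr\rangle = \Bigl\langle \bigcup_{S' \in \mathrm{supp}(\tau)} S' \Bigr\rangle = N.
\]
Third, $\tau(\{S : S \subseteq N\}) = 1$ is immediate: every $S \in \mathrm{supp}(\tau)$ is contained in $N$ by construction, so $\{S : S \subseteq N\}$ is a $\tau$-full superset of $\mathrm{supp}(\tau)$.

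I do not expect any substantive obstacle here. The only nonalgebraic input is the $H$-invariance of the support of $\tau$ under conjugation, which is either trivial (for an invariant random subgroup) or a routine consequence of admissibility (for a stationary one). Everything else is a one-line manipulation of the generating set of $N$, and the only mildly delicate point is the topological bookkeeping needed to identify $\mathrm{supp}(\tau)$ with the subgroups $S \subseteq L$, which is handled by Chabauty-closedness of $\{S : S \subseteq L\}$ for closed $L$.
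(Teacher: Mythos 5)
Your overall strategy (build $N$ out of the random subgroup itself and get normality from conjugation-invariance of the law) is reasonable, but the specific choice $N = \langle \bigcup_{S\in\mathrm{supp}(\tau)} S\rangle$ has a genuine gap: the containment $N \subseteq L$ rests on $\{S : S \subseteq L\}$ being Chabauty-closed, which, as you note, requires $L$ to be closed. The lemma makes no such assumption, and in every application in this paper $L$ is a countable \emph{dense} subgroup of $H$ (for instance $L = \mathrm{proj}_{G_{d}}\Gamma$ in Proposition \ref{P:nearly} and $L = \mathrm{proj}_{H}\Lambda$ in Theorem \ref{T:comm}). In that situation the hypothesis only provides a full-measure, non-closed set of $Q$ with $Q \subseteq L$; the topological support of $\tau$ may contain Chabauty limits of such $Q$ that are not contained in $L$, so your $N$ need not lie in $L$ at all. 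Your opening claim that this $N$ is ``forced'' fails for the same reason: a normal subgroup witnessing the conclusion only has to contain $\tau$-almost every $Q$, not every member of the closed support.

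The paper sidesteps topology entirely by taking $N = \bigcap_{h\in H} h^{-1}Lh$, the normal core of $L$ in $H$: quasi-invariance of $\tau$ gives, for each fixed $h$, that $hQh^{-1} \subseteq L$ for $\tau$-almost every $Q$, hence $Q \subseteq h^{-1}Lh$ almost surely for each $h$, and therefore $Q \subseteq N$ almost everywhere, while $N$ is normal and contained in $L$ by construction. If you want to salvage your route you would have to replace $\mathrm{supp}(\tau)$ by a conjugation-invariant full-measure subset of $\{S : S \subseteq L\}$, which is in effect what the normal-core construction accomplishes without any appeal to the Chabauty topology.
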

\begin{proof}
Since $\tau$ is supported on $S(L)$, for $\tau$-almost every $Q \in S(H)$ in fact $Q \subseteq L$.  As $\tau$ is $H$-quasi-invariant, for all $h$ and $\tau$-almost every $Q$ we have $hQh^{-1} \subseteq L$ as well.  Then $Q \subseteq \cap_{h} h^{-1}Lh$ for almost every $Q$.  Set $N = \cap_{h} h^{-1}Lh$ which is normal in $H$ and contained in $L$.  Then $Q \subseteq N$ almost everywhere as claimed.
\end{proof}

Recall that a group is \textbf{locally finite} when every finitely generated subgroup of it is finite.

\begin{proposition}\label{P:locfin}
Let $\Gamma$ be a virtually torsion-free countable discrete group and $\Gamma \actson (X,\nu)$ a quasi-invariant action with locally finite stabilizers almost everywhere.  Then the stabilizers are finite almost everywhere.
\end{proposition}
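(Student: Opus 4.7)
The plan is to reduce to a torsion-free finite-index subgroup and exploit the basic fact that a locally finite torsion-free group is trivial.

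First I would fix a torsion-free subgroup $\Gamma_0 < \Gamma$ of finite index, which exists by the virtually torsion-free hypothesis. For almost every $x$, the intersection $\stab_\Gamma(x) \cap \Gamma_0$ is a subgroup of the locally finite group $\stab_\Gamma(x)$, hence is itself locally finite; it is also torsion-free, being a subgroup of $\Gamma_0$.

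Next I would observe that any locally finite torsion-free group is trivial: if $g \neq e$ belongs to such a group, then the cyclic group $\langle g \rangle$ is finitely generated, hence finite by local finiteness, forcing $g$ to have finite order and contradicting torsion-freeness. Applied pointwise, this gives $\stab_\Gamma(x) \cap \Gamma_0 = \{e\}$ almost surely.

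Finally I would conclude by a coset-counting argument. The map $\stab_\Gamma(x) \to \Gamma / \Gamma_0$ induced by the quotient has trivial kernel because $\stab_\Gamma(x) \cap \Gamma_0 = \{e\}$; therefore $|\stab_\Gamma(x)| \leq [\Gamma : \Gamma_0] < \infty$ for almost every $x$, which is the desired conclusion. There is no real obstacle here — the only thing to be careful about is that $\Gamma/\Gamma_0$ need not be a group (if $\Gamma_0$ is not normal), but the injection of $\stab_\Gamma(x)$ into the finite coset space $\Gamma/\Gamma_0$ as a set of representatives already bounds its cardinality by $[\Gamma:\Gamma_0]$, which suffices.
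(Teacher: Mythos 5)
Your proof is correct, and it takes a genuinely different and more elementary route than the paper's. The paper also starts from a torsion-free finite-index subgroup $\Gamma_0$ (via the virtually torsion-free hypothesis), but uses it only to bound the length of strictly increasing chains of finite subgroups of $\Gamma$; it then invokes the Hall--Kulatilaka theorem, that every infinite locally finite group contains an infinite abelian subgroup, to produce an infinite strictly increasing chain of finite subgroups inside an infinite stabilizer and derive a contradiction. You bypass that machinery entirely: $\stab_\Gamma(x)\cap\Gamma_0$ is locally finite and torsion-free, hence trivial, and the resulting injection of $\stab_\Gamma(x)$ into the coset space $\Gamma/\Gamma_0$ bounds its order by $[\Gamma:\Gamma_0]$. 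This buys two things: you avoid a deep external theorem (Hall--Kulatilaka's proof rests on the Feit--Thompson odd order theorem), and you get the stronger conclusion that the stabilizers have order uniformly bounded by $[\Gamma:\Gamma_0]$ almost everywhere, rather than merely being finite. Your handling of the non-normal case is also right --- injectivity into the coset space as a set is all that is needed. In fact, you only use that $\stab_\Gamma(x)$ is a torsion group, which is weaker than local finiteness, so your argument proves slightly more than the stated proposition.
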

\begin{proof}
Since $\Gamma$ is virtually torsion-free, there is a finite upper bound $\ell$ for the length of every strictly increasing chain of finite subgroups.

As $\stab(x)$ is locally finite, if it is infinite then it contains an infinite abelian subgroup $A$ (Hall and Kulatilaka \cite{HK}).  But $A$ must be of the form $\oplus_{n=1}^{\infty} F_{n}$ for some nontrivial finite subgroups $F_{n}$ and then $A_{N} = \oplus_{n=1}^{N} F_{n}$ would be a strictly increasing chain of finite subgroups of infinite length.  Therefore $\stab(x)$ must in fact be finite.
\end{proof}

\subsection{Lattices and Projected Actions}

\begin{proposition}\label{P:nearly}
Let $G = G_{1} \times \cdots \times G_{k}$ be a product of simple connected Lie groups, $k \geq 2$.

Let $\Gamma < G$ be a strongly irreducible lattice.

Let $\Gamma \actson (X,\nu)$ be an ergodic stationary action.

Then either $\stab_{\Gamma}(x)$ is locally finite almost everywhere or $\stab_{\Gamma}(x)$ has dense projections to each simple $G_{j}$ almost everywhere.
\end{proposition}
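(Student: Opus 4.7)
The plan is to argue the contrapositive: assume that for some factor $j_{0}$ we have $\overline{\mathrm{proj}_{j_{0}}\stab_{\Gamma}(x)} \neq G_{j_{0}}$ on a set of positive measure, and deduce that $\stab_{\Gamma}(x)$ is locally finite almost everywhere. The condition that $\overline{\mathrm{proj}_{j_{0}}\stab_{\Gamma}(x)} = G_{j_{0}}$ is invariant under the $\Gamma$-action on $S(G_{j_{0}})$ (conjugation via $\mathrm{proj}_{j_{0}}$), so ergodicity upgrades the failure to an almost-sure statement. Define $s : X \to S(G_{j_{0}})$ by $s(x) = \overline{\mathrm{proj}_{j_{0}}\stab_{\Gamma}(x)}$; by Theorem \ref{T:projectinglatt}, $s_{*}\nu$ is realized as the stabilizer distribution of some nontrivial ergodic stationary $G_{j_{0}}$-space $(Y,\eta)$.

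Apply Proposition \ref{P:cpt} to obtain an integer $m$ such that $G_{j_{0}}$-stabilizers on $(Y^{m},\eta^{m})$ are discrete or compact almost everywhere. Since $G_{j_{0}}$ has the Howe-Moore property, a measurable selection in $G_{j_{0}}$ combined with Lemma \ref{L:HM} rules out noncompact closures: any such stabilizer would give rise to an element $g$ with $\eta^{m}(\mathrm{Fix}~g) > 0$ and $\overline{\langle g \rangle}$ noncompact, forcing $\mathrm{Fix}~g \subseteq \mathrm{Fix}~G_{j_{0}}$, which is null on the nontrivial ergodic space. Hence the stabilizers on $(Y^{m},\eta^{m})$ are almost surely compact (discrete stabilizers, being also compact, are finite). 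Transferring through the SRS identification, for $(x_{1},\ldots,x_{m})$ drawn i.i.d.\ from $\nu$ the intersection $\bigcap_{i} s(x_{i})$ is a compact subgroup of $G_{j_{0}}$ almost surely.

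In the sub-case where the stabilizers on $(Y^{m},\eta^{m})$ are finite (discrete), apply Proposition \ref{P:discrete} to the dense countable subgroup $\Lambda = \mathrm{proj}_{j_{0}}(\Gamma) \subseteq G_{j_{0}}$: the $\Lambda$-action on $Y^{m}$ is essentially free, so $\mathrm{proj}_{j_{0}}(\Gamma) \cap \bigcap_{i} s(x_{i}) = \{e\}$ almost surely. Since $\mathrm{proj}_{j_{0}}\bigl(\bigcap_{i}\stab_{\Gamma}(x_{i})\bigr) \subseteq \mathrm{proj}_{j_{0}}(\Gamma) \cap \bigcap_{i} s(x_{i})$ and $\mathrm{proj}_{j_{0}}$ is faithful on $\Gamma$ by strong irreducibility, $\bigcap_{i} \stab_{\Gamma}(x_{i}) = \{e\}$ almost surely on $(X^{m},\nu^{m})$. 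By independence, $\nu(\mathrm{Fix}~\gamma)^{m} = 0$ for each $\gamma \neq e$, so Proposition \ref{P:fixed} gives essentially free behavior and in particular local finiteness.

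The main obstacle is the remaining sub-case where positive-dimensional compact stabilizers occur with positive probability. Here I would combine Lemma \ref{L:topotrick} with strong irreducibility ($\Gamma \cap G_{j_{0}} = \{e\}$) to show that, writing $\tilde{G} = \prod_{j \neq j_{0}} G_{j}$, the intersection $\bigcap_{i} \stab_{\Gamma}(x_{i})$ is contained in $K \times \tilde{G}$ for some compact $K \subseteq G_{j_{0}}$ and injects via $\mathrm{proj}_{\tilde{G}}$ as a discrete subgroup of $\tilde{G}$. Using that compact subgroups of the simple Lie group $G_{j_{0}}$ fall into countably many conjugacy classes (so ergodicity pins the $\Gamma$-conjugacy class of the projection), apply Lemma \ref{L:srssupp} to the $\Gamma$-SRS $\stab_{*}\nu$ to extract a normal subgroup $N \trianglelefteq \Gamma$ containing the stabilizers, lying in a proper subgroup of infinite index in $\Gamma$; by Margulis' Normal Subgroup Theorem (applicable in our higher-rank connected product setting with $k \geq 2$), $N$ must be finite, giving finite and in particular locally finite stabilizers. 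The delicate point is handling the variation of the compact subgroup $K$ across the base, which is where invoking $\Gamma$-ergodicity on the space of conjugacy classes and the normalizer structure of compact subgroups becomes essential.
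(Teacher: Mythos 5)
Your opening moves match the paper: negate density at some factor, push the stabilizer closure to a $G_{j_0}$-space via Theorem \ref{T:projectinglatt}, apply Proposition \ref{P:cpt} to a power, and dispose of the discrete case with Proposition \ref{P:discrete} plus faithfulness of the projection. Two steps, however, do not work as written. First, the claim that a ``measurable selection'' plus Lemma \ref{L:HM} rules out noncompact stabilizers on $(Y^{m},\eta^{m})$ is unjustified: Lemma \ref{L:HM} needs a single fixed noncompact subgroup $Q$ with $\eta^{m}(\mathrm{Fix}~Q)>0$, and a measurable choice of $g(y)\in\stab(y)$ ranging over the uncountable group $G_{j_0}$ gives no such $Q$. (This is exactly why the paper's Lemma \ref{L:dim} runs a Fubini argument on pairs and, when no single $Q$ works, settles for a dimension drop rather than compactness; Proposition \ref{P:cpt} genuinely only delivers ``discrete or compact,'' and noncompact discrete stabilizers are not excluded.) Your case analysis survives this because you treat the discrete case separately anyway, but the intermediate claim should be deleted.

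The real gap is the compact sub-case, which is where the paper does most of its work. Knowing that $\mathrm{proj}_{j_0}$ of the $m$-tuple stabilizer lies in a (point-dependent) compact subgroup says nothing yet about the other factors: the stabilizer could still be an infinite group projecting densely into $\tilde{G}=\prod_{j\neq j_0}G_j$. Your plan to apply Lemma \ref{L:srssupp} to the $\Gamma$-SRS fails at the threshold, since that lemma requires a \emph{fixed} subgroup $L$ containing almost every stabilizer, and here the compact group $K$ varies with the base point --- the ``delicate point'' you flag is not a technicality but the missing argument, and the appeal to Margulis' Normal Subgroup Theorem never gets off the ground. The paper resolves this by working with \emph{all} factors at once: it partitions the indices into $J_d$ (dense projections) and $J_c$ (compact-or-discrete after taking the $m$-th power). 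If $J_d\neq\emptyset$, the $m$-tuple stabilizer is discrete in $K\times G_d$, so by Lemma \ref{L:topotrick} it projects to a discrete subgroup of $G_d$ contained in the countable group $\mathrm{proj}_{G_d}\Gamma$; applying Theorem \ref{T:projectinglatt} to get a $G_d$-SRS and then Lemma \ref{L:srssupp} \emph{in the ambient group} $G_d$ (whose only normal subgroup inside a countable group is trivial) forces essential freeness. If $J_d=\emptyset$, the $m$-tuple stabilizer sits inside a compact subgroup of all of $G$ while being discrete, so every finitely generated subgroup occurring in stabilizers with positive probability is finite --- that is the source of local finiteness. Without this two-case treatment of the remaining factors, your contrapositive does not close.
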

\begin{proof}
For each $j \in \{ 1, 2, \ldots, k \}$, the projection map $\mathrm{proj}_{G_{j}} : G \to G_{j}$ has the property that the image of $\Gamma$ is dense in $G_{j}$ and the map is faithful on $\Gamma$.  Define the map $s_{j} : X \to S(G_{j})$ by $\overline{s_{j}}(x) = \overline{\mathrm{proj}_{G_{j}}~\stab_{\Gamma}(x)}$.  

By Theorem \ref{T:projectinglatt}, there is an ergodic $G_{j}$-space $(Y_{j},\eta_{j})$ such that $(\stab_{G_{j}})_{*}\eta_{j} = (s_{j})_{*}\nu$.

Define the sets
\[
J_{d} = \{ j \in \{ 1, \ldots, k \} : (Y_{j},\eta_{j}) \text{ is trivial } \} \quad\quad\quad\quad J_{c} = \{ 1, \ldots, k \} \setminus J_{d}
\]

Note that for $j \in J_{d}$ we have that $\stab_{*}\eta_{j}$ is a point mass $\delta_{Q}$ for some $Q \in S(G_{j})$.  As $\stab_{*}\eta_{j}$ is $\Gamma$-invariant this means $\gamma Q \gamma^{-1} = Q$ for all $\gamma$.  As $\Gamma$ is dense in $G_{j}$ this means $Q$ is normal in $G$ so $Q = G$ or $Q = \{ e \}$.  If $Q = \{ e \}$ then $\Gamma \actson (X,\nu)$ is essentially free since the projection map is faithful, in which case the proof is complete.  So we proceed with the premise that $\mathrm{proj}_{G_{j}}~\stab(x)$ is dense in $G_{j}$ almost everywhere for $j \in J_{d}$.

From here on, we assume that $J_{c} \ne \emptyset$ since if it is empty then the proof is complete as the projections of the stabilizers are dense to each $G_{j}$ almost everywhere.

Define the groups
\[
G_{c} = \prod_{j \in J_{c}} G_{j} \quad\quad\text{and}\quad\quad G_{d} = \prod_{j \in J_{d}} G_{j}
\]
so that $G = G_{c} \times G_{d}$ (after appropriate rearrangement).

For $j \in J_{c}$, Proposition \ref{P:cpt} says there exists a positive integer $m_{j}$ so that for the diagonal action $G_{j} \actson (Y_{j}^{m_{j}},\eta_{j}^{m_{j}})$ has the property that almost every stabilizer is compact or is discrete.  

Set $m = \max(m_{j} : j \in J_{c})$.  Then almost every stabilizer of $(Y_{j},\eta_{j})^{m}$ is discrete or compact.
Hence, for almost every $(y_{1},\ldots,y_{m}) \in Y_{j}^{m}$ we have that $\cap_{i=1}^{m} \stab(y_{i})$ is discrete or compact.  Then $\cap_{i=1}^{m} \overline{\mathrm{proj}_{G_{j}}~\stab_{\Gamma}(x_{i})}$ is discrete or compact for $\nu^{m}$-almost every $(x_{1},\ldots,x_{m}) \in X^{m}$.

So for $\nu^{m}$-almost every $(x_{1},\ldots,x_{m})$ we have $\overline{\mathrm{proj}_{G_{j}}~\cap_{i=1}^{m} \stab_{\Gamma}(x_{i})}$ is discrete or compact. 

\emph{Discrete Stabilizers}

The set $D = \{ z \in Y_{j}^{m} : \stab(z) \text{ is discrete} \}$ is $G_{j}$-invariant, and if it is positive measure we may consider $G_{j} \actson (D,(\eta^{m})^{*})$ where $(\eta^{m})^{*}$ is $\eta^{m}$ restricted to $D$ and normalized to be a probability measure.  Applying Proposition \ref{P:discrete}, we determine that $\stab(z) \cap \mathrm{proj}_{G_{j}}~\Gamma = \{ e \}$ for a.e.~$z \in D$.

Now if $\overline{\mathrm{proj}_{G_{j}}~\stab_{\Gamma}(x_{1},\ldots,x_{m})} \cap \mathrm{proj}_{G_{j}}~\Gamma = \{ e \}$ then $\mathrm{proj}_{G_{j}}~\stab_{\Gamma}(x_{1},\ldots,x_{m}) = \{ e \}$ and, as the projection is faithful on $\Gamma$, then $\stab_{\Gamma}(x_{1},\ldots,x_{m}) = \{ e \}$.

\emph{Compact Stabilizers}

For $z \notin D$, the corresponding $\mathrm{proj}_{G_{j}}~\stab_{\Gamma}(x_{1},\ldots,x_{m})$ is contained in a compact subgroup.  Therefore $\mathrm{proj}_{G_{j}}~\stab_{\Gamma}(x_{1},\ldots,x_{m})$ is contained in a compact subgroup of $G_{j}$ a.e.~for all $j \in J_{c}$.

\emph{The Diagonal Action $\Gamma \actson (X,\nu)^{m}$} 

Since $J_{c} \ne \emptyset$,  by Theorem \ref{T:projectinglatt} there is a $G_{d}$-space $(Z,\zeta)$ such that $\stab_{*}\zeta = s_{*}\nu$ where $s(x_{1},\ldots,x_{m}) = \overline{\mathrm{proj}_{G_{d}}~\stab_{\Gamma}(x_{1},\ldots,x_{m})}$.

Since $\stab_{\Gamma}(x_{1},\ldots,x_{m})$ is a discrete subgroup of $G_{c} \times G_{d}$ and is contained in $K \times G_{d}$ for some compact subgroup $K$ of $G_{c}$, Lemma \ref{L:topotrick} gives that $\mathrm{proj}_{G_{d}}~\stab_{\Gamma}(x_{1},\ldots,x_{m})$ is discrete in $G_{d}$.

\emph{The case when $J_{d} \ne \emptyset$}

Then $\stab_{*}\zeta$ is a stationary random subgroup of $G_{d}$ which is supported on subgroups of $\mathrm{proj}_{G_{d}}~\Gamma$.  But this means $\stab_{*}\zeta = \delta_{\{e\}}$ by Lemma \ref{L:srssupp} since the only normal subgroup of $G_{d}$ that is contained in $\Gamma$ is the trivial group (as $G_{d}$ is nontrivial when $J_{d}\ne\emptyset$).

So we have that $\Gamma \actson (X,\nu)^{m}$ is essentially free.  For any $\gamma \in \Gamma$ with $\nu(\mathrm{Fix}~\gamma) > 0$ also $\nu^{m}(\mathrm{Fix}_{X^{m}}~\gamma) > 0$.  So we have that $\Gamma \actson (X,\nu)$ is essentially free when $J_{d} \ne \emptyset$.

\emph{The case when $J_{d} = \emptyset$}

In this case, $\stab_{\Gamma}(x_{1},\ldots,x_{m})$ is contained in a compact subgroup of $G$ almost everywhere.

Then for any $\gamma \in \Gamma$ with $\nu(\mathrm{Fix}~\gamma) > 0$, since also $\nu^{m}(\mathrm{Fix}~\gamma) > 0$, we have that the group generated by $\gamma$ is discrete in $G$ but contained in a compact group hence is torsion.

If $\stab_{\Gamma}(x)$ contains a finitely generated infinite subgroup on a positive measure set then, as there only countably many finitely generated subgroups of $\Gamma$, there is some infinite group $A$ with $A \subseteq \stab_{\Gamma}(x)$ on a positive measure set.  Then $\nu(\mathrm{Fix}~A) > 0$ and so $(\eta_{1}\times\cdots\times\eta_{k})^{m}(\mathrm{Fix}~A) > 0$ but then $A$ must be contained in a compact group, which, since $A$ is discrete in $G$, contradicts that it is infinite.
Therefore when $J_{d} = \emptyset$, the $\Gamma$-stabilizers on $X$ are locally finite.
\end{proof}

\begin{proof}[Proof of Theorem \ref{T:latticeactions}]
Proposition \ref{P:nearly} tells us that either the stabilizers project densely to every proper subproduct or else the stabilizers are locally finite.
Selberg's Lemma states that $\Gamma$, being a lattice in a semisimple Lie group, is virtually torsion-free so there exists a finite index $\Gamma_{0} < \Gamma$ which is torsion-free.
Proposition \ref{P:locfin} then gives that the stabilizers are finite.
\end{proof}

\section{Stationary Actions of Lattices and Commensurators}

\subsection{Actions of Lattices in Connected Groups}

\begin{theorem}\label{T:latticewa}
Let $\Gamma < G = G_{1} \times \cdots \times G_{k}$ be a strongly irreducible lattice in a product of at least two connected simple Lie groups.

Let $\Gamma \actson (X,\nu)$ be an ergodic stationary action.

Then either $\stab_{\Gamma}(x)$ is finite almost everywhere or the action is weakly amenable and measure-preserving.
\end{theorem}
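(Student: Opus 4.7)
The plan is to combine the two main results established earlier in the paper, which were tailored precisely to yield this dichotomy.

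First I would invoke Theorem \ref{T:latticeactions}. Under the hypotheses of our theorem---$\Gamma$ a strongly irreducible lattice in a product $G_{1}\times\cdots\times G_{k}$ of at least two simple connected Lie groups acting ergodically and stationarily on $(X,\nu)$---that theorem gives a clean dichotomy: either $\stab_{\Gamma}(x)$ is finite almost everywhere (in which case we are immediately done) or $\stab_{\Gamma}(x)$ projects to a dense subgroup of each simple factor $G_{j}$ for almost every $x$.

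Second, assuming we are in the dense-projection case of the dichotomy, I would apply Corollary \ref{C:latticedenseprojs}. Its hypotheses are exactly: a strongly irreducible lattice in a product of at least two locally compact second countable groups, together with an ergodic stationary action whose stabilizers project densely to each simple factor almost everywhere. Its conclusion is exactly what we want: the action is measure-preserving and weakly amenable.

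There is essentially no obstacle here; the two prior results were engineered to mesh. The only thing worth checking is that the hypotheses line up (the simple connected Lie groups here are locally compact second countable, and $k\geq 2$ matches the ``at least two factors'' requirement of Corollary \ref{C:latticedenseprojs}), so the proof is a one-line citation chain: Theorem \ref{T:latticeactions} produces the dichotomy, and Corollary \ref{C:latticedenseprojs} upgrades the non-finite-stabilizer alternative to the weakly amenable and measure-preserving conclusion.
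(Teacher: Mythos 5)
Your proposal is correct and is exactly the argument the paper gives: Theorem \ref{T:latticeactions} yields the finite-stabilizers versus dense-projections dichotomy, and Corollary \ref{C:latticedenseprojs} converts the dense-projections case into the weakly amenable, measure-preserving conclusion. No gaps; the hypotheses match as you checked.
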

\begin{proof}
Theorem \ref{T:latticeactions} tells us that either the stabilizers are finite or they project densely to each simple factor.  When the projections are dense, Corollary \ref{C:latticedenseprojs} tells us the action is weakly amenable and measure-preserving.
\end{proof}

Weak amenability equates to orbit equivalence to a $\mathbb{Z}$-action (Connes-Feldman-Weiss \cite{CFW}).

\begin{theorem}\label{T:latticefi}
Let $\Gamma < G = G_{1} \times \cdots \times G_{k}$ be a strongly irreducible lattice in a product of connected simple Lie groups, at least two simple factors with one of higher-rank.

Let $\Gamma \actson (X,\nu)$ be an ergodic stationary action.

Then either $\stab_{\Gamma}(x)$ is finite almost everywhere or $\stab_{\Gamma}(x)$ is finite index almost everywhere.
\end{theorem}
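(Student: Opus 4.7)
The plan is to reduce this theorem directly to Theorem \ref{T:latticewa} together with the measure-preserving Nevo--Stuck--Zimmer dichotomy. Theorem \ref{T:latticewa} was proven under the assumption of at least two simple factors (with no rank hypothesis) and gave a dichotomy between finite stabilizers and (weakly amenable) measure-preserving actions. The present theorem adds the higher-rank hypothesis on one factor, and the goal is to use it only in the measure-preserving case to upgrade ``measure-preserving'' to ``finite index stabilizers''.

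Concretely, I would proceed as follows. First, apply Theorem \ref{T:latticewa}: either $\stab_\Gamma(x)$ is finite almost everywhere, in which case we are done, or $\Gamma \actson (X,\nu)$ is measure-preserving (and weakly amenable). In the second case, invoke the measure-preserving dichotomy for strongly irreducible lattices in products of connected simple Lie groups with at least one higher-rank simple factor, which is the author's extension \cite{Cr17} of the Nevo--Stuck--Zimmer theorem. This result asserts that every ergodic measure-preserving action of such $\Gamma$ has either finite or finite-index stabilizers almost everywhere. Combining these two dichotomies immediately yields the claim.

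The only subtlety to verify is that the hypotheses of \cite{Cr17} are met by our $\Gamma$: namely that strong irreducibility in the sense of Definition \ref{D:strirr} is compatible with (and at least as strong as) the irreducibility assumption used in the measure-preserving result, and that ``at least two simple factors, one higher-rank'' matches the assumption there. Both are immediate from the setup, so no additional work is needed.

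The main obstacle has therefore already been dispatched in Theorem \ref{T:latticewa}: reducing an arbitrary stationary action to a measure-preserving action (via the dense-projection route through Corollary \ref{C:latticedenseprojs} and the induced action machinery). Once in the measure-preserving world, the higher-rank hypothesis allows us to cite the existing rigidity theorem as a black box, so the proof itself becomes essentially a two-line combination of Theorem \ref{T:latticewa} and \cite{Cr17}.
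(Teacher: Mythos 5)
Your proposal is correct and matches the paper's proof in structure: apply Theorem \ref{T:latticewa} to split into the finite-stabilizer case and the weakly amenable measure-preserving case, then dispatch the latter by citing \cite{Cr17}. The only (immaterial) difference is that the paper invokes Corollary 8.6 of \cite{Cr17}, which uses the weak amenability directly to conclude finite-index stabilizers, whereas you cite the full measure-preserving Nevo--Stuck--Zimmer dichotomy from the same reference; either black box closes the argument.
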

\begin{proof}
Corollary 8.6 \cite{Cr17} tells us that if the action weakly amenable and measure-preserving then the stabilizers are finite index almost everywhere. 
\end{proof}

\subsection{Actions of Dense Commensurators}

\begin{theorem}\label{T:comm}
Let $\Gamma < G$ be a strongly irreducible lattice in a connected semisimple Lie groups, at least one of higher-rank.

Let $\Lambda < G$ be a dense commensurator of $\Gamma$ such that the relative profinite completion $\rpf{\Lambda}{\Gamma}$ is a product of simple uncountable totally disconnected locally compact groups with the Howe-Moore property.

Let $\Lambda \actson (X,\nu)$ be an ergodic stationary action.

Then either $\stab_{\Lambda}(x)$ is finite almost everywhere or $\stab_{\Lambda}(x)$ is finite index almost everywhere.
\end{theorem}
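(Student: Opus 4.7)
The plan is to apply Theorem \ref{T:cl} to realize $\Lambda$ as a strongly irreducible lattice in $G \times H$, where $H = \rpf{\Lambda}{\Gamma}$ is by hypothesis a product of simple uncountable totally disconnected Howe-Moore locally compact groups, and then to run the template of the proof of Theorem \ref{T:latticefi} in this broader product. A stationary $\Lambda$-action is precisely a stationary action for this lattice structure, and $G \times H$ still contains a higher-rank simple factor because $G$ does.

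The core work is to upgrade Proposition \ref{P:nearly} to lattices in a product of simple locally compact groups whose factors are either connected simple Lie groups or simple totally disconnected Howe-Moore groups. For each simple factor $L_j$ of $G \times H$, Theorem \ref{T:projectinglatt} produces an ergodic $L_j$-space $(Y_j,\eta_j)$ whose stabilizer distribution matches the pushforward of the closed projection of $\stab_\Lambda$. Simplicity and ergodicity force the dichotomy that $(Y_j,\eta_j)$ is either trivial, giving dense projection of $\stab_\Lambda(x)$ to $L_j$, or nontrivial. For connected simple Lie $L_j$, Proposition \ref{P:cpt} applies verbatim. For totally disconnected simple Howe-Moore $H_j$, the analogous statement---that in the nontrivial case almost every stabilizer is compact---must be established anew, and this is where I expect the main obstacle to lie. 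The Howe-Moore property enters via Lemma \ref{L:HM}, which rules out any fixed closed noncompact subgroup $Q$ with $\eta_j(\mathrm{Fix}(Q)) > 0$, but one must produce such a $Q$ from the random subgroup $y \mapsto \stab(y)$. The connected-component-of-stabilizer invariant used in Lemma \ref{L:dim} is trivial for totally disconnected $H_j$; I would replace it with $\stab(y) \cap K$ for a compact open $K \subseteq H_j$, which takes values in the compact space of closed subgroups of $K$, and use Fubini on level sets together with iterated passage to finite powers to extract a common noncompact $Q$ whenever stabilizers are noncompact on a positive-measure set.

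Combining across all simple factors yields the dichotomy: either $\stab_\Lambda(x)$ projects densely to every simple factor of $G \times H$, or $\stab_\Lambda(x)$ is locally finite almost everywhere. In the dense case, Corollary \ref{C:latticedenseprojs} gives that the action is measure-preserving and weakly amenable, and Corollary 8.6 \cite{Cr17} (applicable since $G \times H$ has a higher-rank factor) upgrades this to finite-index stabilizers. In the locally-finite case, the obstacle is that Proposition \ref{P:locfin} requires $\Lambda$ to be virtually torsion-free, which is not automatic for commensurators. I would reduce to $\Gamma$ via Proposition \ref{P:relpro}: the restricted $\Gamma$-action is stationary, $\Gamma$ is virtually torsion-free by Selberg's Lemma, and $\stab_\Gamma(x) = \stab_\Lambda(x) \cap \Gamma$ inherits local finiteness, so Proposition \ref{P:locfin} yields finite $\Gamma$-stabilizers. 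To upgrade to finite $\Lambda$-stabilizers one uses that the image of $\stab_\Lambda(x)$ in each $H_j$-factor is contained in a compact subgroup, together with the discreteness of $\Lambda$ in $G \times H$, to bound $[\stab_\Lambda(x):\stab_\Gamma(x)]$ finitely.
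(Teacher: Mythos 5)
Your overall strategy---treat $\Lambda$ as a strongly irreducible lattice in $G\times H$ via Theorem \ref{T:cl} and rerun the proof of Theorem \ref{T:latticefi} in the mixed product---is not the route the paper takes, and the step you flag as ``the main obstacle'' is a genuine gap that your sketch does not close. To extend Proposition \ref{P:nearly} to totally disconnected simple factors you would need analogues of Lemma \ref{L:dim}, Proposition \ref{P:cpt}, \emph{and} Proposition \ref{P:discrete}. The first two rest on an integer-valued invariant ($\dim C(y)$) that strictly decreases under the passage to the diagonal square, which is what makes the iteration terminate after finitely many steps; your proposed replacement $\stab(y)\cap K$ lives in the compact space $S(K)$, which carries no such well-founded ordering, and you give no termination argument. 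Worse, Proposition \ref{P:discrete} fails outright for totally disconnected groups: its proof uses that in a connected center-free group the centralizer of a nontrivial element contains no neighborhood of the identity, whereas in a totally disconnected group centralizers are frequently open, so the ``discrete stabilizers $\Rightarrow$ essentially free for a countable subgroup'' step has no analogue. So the case analysis you would need in each $H_j$ is not available by the methods you cite.

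The paper avoids all of this by never projecting the $\Lambda$-action to the factors $H_j$ through a Proposition \ref{P:cpt}-type argument. Instead it restricts the action to $\Gamma$ (Proposition \ref{P:relpro}), applies the already-proved connected case to get the finite/finite-index dichotomy for $\stab_\Gamma(x)$ (checking that the relevant set is $\Lambda$-invariant via commensuration), and then reads off the $H$-projections of $\stab_\Lambda(x)$ directly from that dichotomy: if $\stab_\Gamma(x)$ is finite then $\mathrm{proj}_H\,\stab_\Lambda(x)\cap K$ is finite, so the projection is discrete, hence a countable random subgroup of $H$, which Lemma \ref{L:srssupp} plus uncountability of the $H_j$ forces to be trivial; if $\stab_\Gamma(x)$ is finite index then $\overline{\mathrm{proj}_H\,\stab_\Lambda(x)}$ contains a finite-index subgroup of $K$ and is therefore \emph{open}, and Howe--Moore is used only through the fact that a proper open subgroup of a Howe--Moore group is compact---the compact alternative then being killed by another countability argument via Theorem \ref{T:projectingcomm} and Lemma \ref{L:srssupp}. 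Your closing argument (compactness of the $H$-projection plus $C\cap K$ having finite index in a compact $C$ forces $[\stab_\Lambda(x):\stab_\Gamma(x)]<\infty$) is sound as far as it goes, but it sits downstream of the unproved compactness dichotomy, so the proposal as a whole does not constitute a proof.
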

\begin{proof}
By Proposition \ref{P:relpro}, the restriction of the action to $\Gamma$ is stationary.  By Theorem \ref{T:latticefi}, the restriction of the action to $\Gamma$ has stabilizers which are finite or finite index almost everywhere (on each $\Gamma$-ergodic component one or the other must occur).  For $\lambda \in \Lambda$, we know $\Gamma \cap \lambda\Gamma\lambda^{-1}$ is finite index in both $\Gamma$ and $\lambda\Gamma\lambda^{-1}$.  If $[\Gamma : \stab_{\Gamma}(x)] < \infty$ then $[\Gamma : \stab(x) \cap \Gamma] < \infty$ and so $[\lambda\Gamma\lambda^{-1} : \lambda\stab(x)\lambda^{-1} \cap \lambda\Gamma\lambda^{-1}] < \infty$.  Using that $\Gamma \cap \lambda\Gamma\lambda^{-1}$ is finite index then $[\Gamma \cap \lambda\Gamma\lambda^{-1} : \lambda\stab(x)\lambda^{-1} \cap \lambda\Gamma\lambda^{-1} \cap \Gamma] < \infty$ and so $[\Gamma : \lambda\stab(x)\lambda^{-1}\cap\Gamma] < \infty$.  But this just says $[\Gamma : \stab_{\Gamma}(\lambda x)] < \infty$.  So the set where the $\Gamma$-stabilizers are finite index is $\Lambda$-invariant hence by $\Lambda$-ergodicity the $\Gamma$-stabilizers are either finite almost everywhere or finite index almost everywhere.

\emph{Finite $\Gamma$-Stabilizers}

Let $H = \rpf{\Lambda}{\Gamma}$ be the relative profinite completion.  Then $\Lambda < G \times H$ is a strongly irreducible irreducible lattice (Theorem \ref{T:cl}).

When $\stab_{\Gamma}(x)$ is finite, since $\mathrm{proj}_{H}~\stab_{\Lambda}(x) \cap K = \mathrm{proj}_{H}~\stab_{\Lambda}(x) \cap \mathrm{proj}_{H}~\Gamma$ by Proposition 6.1.3 \cite{CP17},
\[
\mathrm{proj}_{H}~\stab_{\Lambda}(x) \cap K = \mathrm{proj}_{H}~(\stab_{\Lambda}(x) \cap \Gamma) = \mathrm{proj}_{H}~\stab_{\Gamma}(x)
\]
is finite.  So $\mathrm{proj}_{H}~\stab_{\Lambda}(x)$ is discrete in $H$ as $K$ is open.  In particular, $\overline{\mathrm{proj}_{H}~\stab_{\Lambda}(x)} = \mathrm{proj}_{H}~\stab_{\Lambda}(x) \subseteq \mathrm{proj}_{H}~\Lambda$ almost everywhere.

Treating $\Lambda$ as an irreducible lattice in $G \times H$, Theorem \ref{T:projectinglatt} tells us there is an ergodic $H$-space $(Y,\eta)$ such that $\stab_{*}\eta = s_{*}\nu$ where $s : X \to S(H)$ is the $\Lambda$-map $s(x) = \overline{\mathrm{proj}_{H}~\stab_{\Lambda}(x)}$.

Then $\stab_{*}\eta$ is a random subgroup of $H$ which is supported on subgroups of $\mathrm{proj}_{H}~\Lambda$.  Lemma \ref{L:srssupp} then says it is supported on subgroups of $N$ for some $N \normal H$ with $N \subseteq \mathrm{proj}_{H}~\Lambda$.

Since $H = \prod H_{j}$ is a product of simple groups, $N$ is a proper subproduct.  But $N \subseteq \mathrm{proj}_{H}~\Lambda$ so if $N$ is nontrivial then there some simple $H_{j}$ which is contained in $\mathrm{proj}_{H}~\Lambda$ meaning that $H_{j}$ is countable, contradicting our hypothesis.

So $N$ is trivial meaning that $\overline{\mathrm{proj}_{H}~\stab_{\Lambda}(x)} = \{ e \}$ almost everywhere.  The kernel of $\mathrm{proj}_{H}$ is contained in $\Gamma$ hence is finite (by Margulis' Normal Subgroup Theorem if it is infinite then it is finite index; but that would make $H$ discrete hence countable).  Therefore $\stab_{\Lambda}(x)$ is finite almost everywhere.

\emph{Finite Index $\Gamma$-Stabilizers}

When $\stab_{\Gamma}(x)$ is finite index in $\Gamma$, we have that $\mathrm{proj}_{H}~\stab_{\Lambda}(x)$ contains a finite index subgroup of $\mathrm{proj}_{H}~\Gamma$ and therefore $\overline{\mathrm{proj}_{H}~\stab_{\Lambda}(x)}$ contains a finite index subgroup of $\overline{\mathrm{proj}_{H}~\Gamma} = K$.

As $K$ is a compact open subgroup, then $\overline{\mathrm{proj}_{H}~\stab_{\Lambda}(x)}$ contains a compact open subgroup and hence is open almost everywhere.

Write $H = \prod H_{j}$ and then $\overline{\mathrm{proj}_{H_{j}}~\stab_{\Lambda}(x)}$ is an open subgroup of $H_{j}$ almost everywhere for each $j$ (as the projection maps are open maps).  Since each $H_{j}$ has the Howe-Moore property, the only proper open subgroups are compact.
Write
\[
J_{c} = \{ j : \overline{\mathrm{proj}_{H_{j}}~\stab(x)} \text{ is compact a.e.} \} \quad J_{d} = \{ j : j \notin J_{c} \} = \{ j : \overline{\mathrm{proj}_{H_{j}}~\stab(x)} = H_{j}~\text{a.e.} \}
\]
and define the groups
\[
H_{c} = \prod_{j \in J_{c}} H_{j} \quad\quad\quad\quad H_{d} = \prod_{j \in J_{d}} H_{j}
\]

If $J_{c} \ne \emptyset$ then  $\stab_{\Lambda}(x) < G \times H_{d} \times H_{c}$ is discrete and $\stab_{\Lambda}(x) < G \times H_{d} \times K_{x}$ for some compact $K_{x}$ so Lemma \ref{L:topotrick} says that $\mathrm{proj}_{G\times H_{d}}~\stab_{\Lambda}(x)$ is discrete in $G\times H_{d}$.

Since $\Lambda$ is strongly irreducible as a lattice in $G \times H$, and since $H_{d}$ is a proper subproduct of $H$, $\Lambda$ is a dense commensurator of a lattice in $G \times H_{d}$ (Proposition \ref{P:relpro}) so Theorem \ref{T:projectingcomm} tells us there is an ergodic $G\times H_{d}$-space $(Y,\eta)$ such that $\stab_{*}\eta = s_{*}\nu$ where $s(x) = \overline{\mathrm{proj}_{G\times H_{d}}~\stab_{\Lambda}(x)}$ is the $\Lambda$-map $s : X \to S(G\times H_{d})$.

Then $\stab_{*}\eta$ is a random subgroup of $G\times H_{d}$ supported on subgroups of $\mathrm{proj}_{G\times H_{d}}~\Lambda$.  Lemma \ref{L:srssupp} then says it is supported on subgroups of $N$ for some $N \normal G$ with $N \subseteq \mathrm{proj}_{G}~\Lambda$.  But no proper subproduct of $G \times H_{d}$ can be contained in a countable group so $N$ is trivial.  This means that $\mathrm{proj}_{G\times H_{d}}~\stab_{\Lambda}(x) = \{ e \}$ contradicting that the $\Gamma$-stabilizers are finite index.

So $J_{c} = \emptyset$ and $H_{d} = H$.  Then $\stab_{\Lambda}(x)$ projects densely into each $H_{j}$ almost everywhere.  Since it also projects densely into each simple factor of $G$ (as the $\Gamma$-stabilizers do), Corollary \ref{C:latticedenseprojs} then tells us that $\Lambda \actson (X,\nu)$ is weakly amenable and measure-preserving, so by Corollary 8.6 \cite{Cr17} (every simple $H_{j}$ acts ergodically on the induced space as the projections of the stabilizers are dense), it is essentially transitive meaning that $\stab_{\Lambda}(x)$ is finite index in $\Lambda$ a.e.
\end{proof}

\begin{corollary}\label{C:commensurators}
Let $G$ be a semisimple connected Lie group with finite center, with at least one simple factor of higher-rank, and $\Gamma < G$ be a finitely generated, strongly irreducible lattice.

Let $\Lambda$ be a dense commensurator of $\Gamma$.
Then every stationary action of $\Lambda$ has finite stabilizers almost everywhere or has finite index stabilizers almost everywhere.
\end{corollary}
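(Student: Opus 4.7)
The strategy is to reduce to the setting of Theorem \ref{T:comm} by verifying that the relative profinite completion $\rpf{\Lambda}{\Gamma}$ has the required Howe-Moore and simple-product structure, and then invoking that theorem.

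First I would dispose of the finite center. Writing $Z = Z(G)$, which is finite by hypothesis, the image of $\Lambda$ in $G/Z$ is a dense commensurator of the image of $\Gamma$, and the quotient action of $\Lambda/(\Lambda\cap Z)$ on $X/(\Lambda\cap Z)$ is again an ergodic stationary action. Since $\Lambda\cap Z$ is finite, a direct bookkeeping argument shows that $\stab_{\Lambda}(x)$ is finite (resp.~finite index) almost everywhere if and only if the corresponding quotient stabilizer is. So we may assume that $G$ has trivial center.

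With trivial center, Theorem \ref{T:cl} realizes $\Lambda$ as a strongly irreducible lattice in $G \times H$, where $H = \rpf{\Lambda}{\Gamma}$ is locally compact, totally disconnected and nondiscrete. The central step is to show that $H$ decomposes as a product of simple uncountable totally disconnected locally compact groups each with the Howe-Moore property. For this I would invoke Margulis' arithmeticity theorem: since $G$ has real rank at least $2$ (some simple factor is higher-rank) and $\Gamma$ is strongly, hence in particular irreducibly, embedded, $\Gamma$ is arithmetic. Margulis' commensurator theorem then places $\Lambda$ inside the $\mathbb{Q}$-points of the underlying algebraic group, and the standard adelic description identifies $H$, after absorbing compact kernels which do not affect the stabilizer dichotomy, with a product of simple algebraic groups over non-archimedean local fields. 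Each such factor is simple, totally disconnected, uncountable, and satisfies Howe-Moore by the classical theorem for algebraic groups over local fields.

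With this structural identification in hand, Theorem \ref{T:comm} applies verbatim and yields the stabilizer dichotomy for $\Lambda \actson (X,\nu)$. The main obstacle is cleanly matching the abstract relative profinite completion, defined via the permutation representation $\tau : \Lambda \to \mathrm{Symm}(\Lambda/\Gamma)$ in the proof of Theorem \ref{T:cl}, with the expected adelic object; this requires Margulis' commensurator superrigidity together with careful handling of compact kernels, and the finite generation of $\Gamma$ enters precisely to control the ambient arithmetic data.
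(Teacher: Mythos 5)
Your proposal follows essentially the same route as the paper: Margulis arithmeticity identifies $\Gamma$ with $\mathbf{G}(\mathbb{Z})$ up to finite index and finite kernels, the commensurator sits in $\mathbf{G}(\mathbb{Q})$, the relative profinite completion is identified with a product of simple $p$-adic groups (the paper cites Theorem 10.2 of \cite{CP17} for exactly the adelic matching you describe), Howe--Moore holds for these factors, and Theorem \ref{T:comm} finishes. Your explicit reduction from finite to trivial center is a minor bookkeeping addition the paper leaves implicit, but the argument is the same.
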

\begin{proof}
By Margulis' Arithmeticity Theorem \cite{Margulis}, $\Gamma$ is the $\mathbb{Z}$-points of some semisimple algebraic group $\mathbf{G}$ (up to finite index and finite kernels).  The commensurator of $\mathbf{G}(\mathbb{Z})$ is $\mathbf{G}(\mathbb{Q})$.  The relative profinite completion $\rpf{\Lambda}{\Gamma}$ is therefore of the form $\prod \mathbf{G}(\mathbb{Q}_{p})$ by Theorem 10.2 in \cite{CP17}.  Since simple $p$-adic groups are Howe-Moore, Theorem \ref{T:comm} gives the conclusion.
\end{proof}

\subsection{Actions of Lattices in General Semisimple Groups}

\begin{corollary}\label{C:generallattices}
Let $\Gamma < G$ be a strongly irreducible lattice in a semisimple group with finite center and no compact factors, with at least one connected higher-rank simple factor.

Every stationary action of $\Gamma$ either has finite stabilizers or finite index stabilizers.
\end{corollary}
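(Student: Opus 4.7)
The plan is to reduce Corollary \ref{C:generallattices} to the two dichotomy theorems already established, namely Theorem \ref{T:latticefi} for products of connected simple Lie groups and Theorem \ref{T:comm} for dense commensurators whose relative profinite completion is a Howe–Moore product. Decompose $G = G_\infty \times G_f$ into its connected (archimedean) simple factors $G_\infty$ and its totally disconnected (non-archimedean) simple factors $G_f$. The hypothesis that $G$ has at least one connected higher-rank simple factor places this factor in $G_\infty$.

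The main case is when $G_f$ is nontrivial. Fix a compact open subgroup $K < G_f$ and set $\Gamma_0 = \mathrm{proj}_{G_\infty}(\Gamma \cap G_\infty \times K)$. By Proposition \ref{P:relpro}, $\Gamma_0$ is a strongly irreducible lattice in $G_\infty$ and the restricted action $\Gamma_0 \actson (X,\nu)$ remains stationary. Strong irreducibility of $\Gamma$ ensures that the projection $G \to G_\infty$ is both dense and faithful on $\Gamma$ (since $G_\infty$ is a proper subproduct), so we may embed $\Gamma \hookrightarrow G_\infty$ and view $\Gamma$ as a dense commensurator of $\Gamma_0$; by the analysis of the relative profinite completion in Theorem \ref{T:cl}, the completion $\rpf{\Gamma}{\Gamma_0}$ is identified with $G_f$ (up to finite kernels). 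Since each simple non-archimedean algebraic factor of $G_f$ is uncountable, totally disconnected, and Howe–Moore, the hypotheses of Theorem \ref{T:comm} are met and the theorem delivers the finite versus finite-index dichotomy for $\Gamma$-stabilizers.

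When $G_f$ is trivial, $G = G_\infty$ is connected semisimple. If $G_\infty$ has at least two simple factors, Theorem \ref{T:latticefi} applies directly. The remaining subcase is that $G$ is a single higher-rank connected simple Lie group; here I would invoke Margulis arithmeticity exactly as in the proof of Corollary \ref{C:commensurators} to identify $\Gamma$ (up to commensurability) with $\mathbf{G}(\mathbb{Z})$ for a semisimple $\mathbb{Q}$-group $\mathbf{G}$ with $\mathbf{G}(\mathbb{R}) \cong G$. Then $\mathbf{G}(\mathbb{Z}[1/p])$ is a strongly irreducible $S$-arithmetic lattice in $G \times \mathbf{G}(\mathbb{Q}_p)$ containing $\Gamma$, and the relative profinite completion $\rpf{\mathbf{G}(\mathbb{Z}[1/p])}{\mathbf{G}(\mathbb{Z})}$ is $\mathbf{G}(\mathbb{Q}_p)$ (Howe–Moore). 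Lifting the stationary $\Gamma$-action to a stationary action of this enlarged lattice via a construction modelled on Definition \ref{D:statind} places us back in the previous case.

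The main obstacle is this last lifting step in the single-factor reduction: a stationary $\Gamma$-action does not automatically extend to a stationary action of a larger $S$-arithmetic lattice, so one must arrange the inducing construction so that the finite versus finite-index dichotomy obtained from Theorem \ref{T:comm} for the enlarged lattice translates back to the same dichotomy for $\Gamma$-stabilizers on the original $(X,\nu)$. Once this transfer is in place the corollary follows uniformly from the case analysis.
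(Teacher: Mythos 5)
Your decomposition $G = G_\infty \times G_f$ and your handling of the two main cases track the paper's proof closely: when $G_f$ is nontrivial the paper likewise passes to $\Gamma_0 = \mathrm{proj}_{G_\infty}(\Gamma \cap G_\infty \times K)$ via Proposition \ref{P:relpro} and treats $\Gamma$ as a dense commensurator of $\Gamma_0$ (the paper routes through Corollary \ref{C:commensurators}, which identifies the relative profinite completion via Margulis arithmeticity, rather than identifying it with $G_f$ directly as you do --- a cosmetic difference); when $G_f$ is trivial and $G_\infty$ has at least two simple factors, the paper also quotes Theorem \ref{T:latticefi}.

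The genuine gap is your treatment of the remaining case, $G$ a single connected higher-rank simple Lie group. The paper does not prove this case internally at all: it cites \cite{BH19} for it. Your proposed alternative --- embed $\Gamma$ (up to commensurability $\mathbf{G}(\mathbb{Z})$) into the $S$-arithmetic lattice $\mathbf{G}(\mathbb{Z}[1/p]) < G \times \mathbf{G}(\mathbb{Q}_p)$ and ``lift'' the stationary $\Gamma$-action --- founders on exactly the obstacle you name and do not resolve. All the machinery available in the paper moves actions \emph{downward}: Proposition \ref{P:relpro} restricts a stationary action of a commensurator to a sublattice, and Definition \ref{D:statind} induces from a lattice to the \emph{ambient locally compact group}, producing a $G$-space whose stabilizers are the conjugates $f\stab_{\Gamma}(x)f^{-1}$ inside $G$. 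Neither construction produces a stationary action of the countable overgroup $\mathbf{G}(\mathbb{Z}[1/p])$ whose stabilizers determine those of the original $\Gamma$-action: restricting the induced $G \times \mathbf{G}(\mathbb{Q}_p)$-action to $\mathbf{G}(\mathbb{Z}[1/p])$ yields stabilizers of the form $\mathbf{G}(\mathbb{Z}[1/p]) \cap f\stab_{\Gamma}(x)f^{-1}$, and a finite versus finite-index dichotomy for these does not translate back into one for $\stab_{\Gamma}(x)$ on $(X,\nu)$. As written, this subcase is unproved in your argument; the fix consistent with the paper is simply to invoke \cite{BH19} for the single-simple-factor case.
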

\begin{proof}
Write $G = G_{0} \times H$ where $G_{0}$ is connected and $H$ is totally disconnected.  If $H$ is trivial then Theorem \ref{T:latticefi} gives the conclusion; if not then by Proposition \ref{P:relpro} the projection of $\Gamma$ to $G$ is a dense commensurator of a strongly irreducible lattice $\Gamma_{0} < G_{0}$ and any stationary action of $\Gamma$ is stationary for $\Gamma_{0}$ so Corollary \ref{C:commensurators} then gives the result (the case when $G$ is a simple connected higher-rank group is covered by \cite{BH19}).
\end{proof}

\ifthenelse{\equal{\includeappendix}{true}}{%
\appendix
\section{The Stationary Intermediate Factor Theorem for Dense Commensurators}

For completeness, we include a stationary form of the factor theorem for commensurators, though as we did not need it above we include it as an appendix.

Such a theorem will be needed to handle e.g.~actions of tree lattices, though we do not elaborate nor attempt that here.

\begin{theorem}\label{T:uniquemap}
Let $G$ be a locally compact second countable group and $\mu$ an admissible measure on $G$.  Let $(B,\beta)$ be the Poisson boundary for $(G,\mu)$ and $(X,\nu)$ be an ergodic $(G,\mu)$-space.  Let $(Z,\zeta)$ be a $G$-space.

Let $\pi : (B,\beta) \curlyvee (X,\nu) \to (Z,\zeta)$ and $\varphi : (Z,\zeta) \to (X,\nu)$ and $\pi^{\prime} : (B,\beta) \curlyvee (X,\nu) \to (Z,\zeta^{\prime})$ and $\varphi^{\prime} : (Z,\zeta^{\prime}) \to (X,\nu)$ such that $\varphi \circ \pi = \varphi^{\prime} \circ \pi^{\prime}$ is the projection map.

Then $\zeta = \zeta^{\prime}$ and $\pi = \pi^{\prime}$ almost everywhere (and $\varphi = \varphi^{\prime}$ almost everywhere).
\end{theorem}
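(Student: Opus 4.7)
My plan is to exploit the fact that the natural projection $(B,\beta) \curlyvee (X,\nu) \to (X,\nu)$ is a \emph{relatively contractive} extension, which follows from $(B,\beta)$ being a contractive $G$-space (see \cite{FG}, and cf. Section 2 of \cite{Cr17}). This is the key dynamical fact; the theorem is essentially a rigidity statement for $G$-maps over such extensions.

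First I would form the graph map $\Phi : (B,\beta) \curlyvee (X,\nu) \to Z \times Z$ by $\Phi(y) = (\pi(y), \pi'(y))$, and let $\lambda = \Phi_* \rho$, where $\rho$ is the measure on $(B,\beta) \curlyvee (X,\nu)$. The hypothesis $\varphi \circ \pi = \varphi' \circ \pi'$ ensures that $\lambda$ is supported on $\{(z_1, z_2) : \varphi(z_1) = \varphi'(z_2)\}$, i.e., on the fiber product $Z \times_X Z$, and its two coordinate projections push $\lambda$ to $\zeta$ and $\zeta'$ respectively. So $(Z \times_X Z, \lambda)$ is an intermediate $G$-factor of $(B,\beta) \curlyvee (X,\nu)$ over $(X,\nu)$, and as a factor of a relatively contractive extension, $(Z \times_X Z, \lambda) \to (X,\nu)$ is itself relatively contractive.

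The core step is then to show $\lambda$ is supported on the diagonal. I would proceed by the uniqueness of the boundary map for stationary $G$-spaces: for any stationary extension $(W,\omega) \to (X,\nu)$ the $G$-map $B \to P(W)$ lifting the boundary map $B \to P(X)$ is unique (e.g. \cite{BS} Theorem 2.14). Applying this principle first to identify $\zeta = \zeta'$ — the boundary maps $b \mapsto (\pi(b,\cdot))_* b_X(b)$ and $b \mapsto (\pi'(b,\cdot))_* b_X(b)$ into $P(Z)$ must agree by uniqueness, using the disintegration $\rho = \int \delta_b \times b_X(b)\, d\beta(b)$ — collapses the ambiguity in the target. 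Once $\zeta = \zeta'$ and hence $\varphi = \varphi'$ a.e.~are established, applying the same uniqueness principle to $G$-maps $B \curlyvee X \to (Z,\zeta)$ over $(X,\nu)$ forces $\pi = \pi'$ almost everywhere, which is equivalent to $\lambda$ being supported on the diagonal.

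The main obstacle is making the uniqueness principle precise in the relative setting: I need to be careful that the contractivity of $B \curlyvee X \to X$ really does pin down both the measure on $Z$ and the factor map, not merely the measure. The cleanest formulation — that for relatively contractive $q : Y \to X$ and any $G$-space $W$ over $X$ there is at most one $G$-map $Y \to W$ over $X$ — follows from combining uniqueness of boundary maps with the explicit structure $\rho = \int \delta_b \times b_X(b)\,d\beta(b)$, but the bookkeeping of $\zeta$ versus $\zeta'$ must be handled first, since \emph{a priori} the two factorizations have distinct targets as $G$-spaces with measure.
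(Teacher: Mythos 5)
Your second step---once $\zeta=\zeta'$ is known, deduce $\pi=\pi'$ from the fact that $(B,\beta)\curlyvee(X,\nu)\to(X,\nu)$ is relatively contractive and that $G$-maps out of such an extension over a fixed base are unique---is sound, and is really the same mechanism as the paper's proof in abstract packaging: the paper computes directly that the conditional measures satisfy $(\zeta_\omega)_x=\delta_{\pi(b(\omega),x)}$, i.e.\ that the fibers of $\zeta_\omega$ over $\nu_\omega$ are point masses, which is exactly the relative contractivity you invoke. (The fiber product $Z\times_X Z$ you set up is never actually used afterward and can be dropped.)

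The genuine gap is in the first step, $\zeta=\zeta'$. The ``uniqueness principle'' you appeal to---that there is at most one $G$-map $B\to P(Z)$ lifting the boundary map $B\to P(X)$---is false, and invoking it to conclude that two maps with \emph{a priori} different barycenters coincide is circular: by the martingale argument a $G$-map $\Psi:B\to P(Z)$ satisfies $\Psi(b(\omega))=\lim_n\omega_1\cdots\omega_n\,\mathrm{bar}\,\Psi_*\beta$, so it is determined by (and only by) its barycenter; uniqueness of boundary maps is uniqueness \emph{given} the target stationary measure. Concretely, take $Z=X\times\{0,1\}$ with $G$ acting trivially on the second coordinate, $\pi(b,x)=(x,0)$, $\pi'(b,x)=(x,1)$, and $\varphi=\varphi'$ the projection: both maps are $G$-maps lifting the boundary map of $X$, yet $\zeta=\nu\times\delta_0\ne\nu\times\delta_1=\zeta'$. (This example also shows that the statement implicitly requires $\zeta'$ to lie in the measure class of $\zeta$, which is how the theorem is applied.) The paper's route to $\zeta=\zeta'$ is different and does use that information: both measures are ergodic stationary, being pushforwards of the ergodic stationary joining, and distinct ergodic stationary measures are mutually singular, so equivalence of the measure classes forces equality. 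You need to supply this step, or some substitute using the relation between $\zeta$ and $\zeta'$, before your uniqueness argument for $\pi=\pi'$ can get started.
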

\begin{proof}
Since $G \actson (B,\beta) \curlyvee (X,\nu)$ is ergodic hence so is $G \actson (Z,\zeta)$.  As the same holds for $\zeta^{\prime}$ then $\zeta = \zeta^{\prime}$ since ergodic stationary measures are extremal in the (convex) set of stationary measures (Corollary 2.7 \cite{BS}).

For $\omega \in G^{\mathbb{N}}$, consider the maps $\pi : (B \times X, \beta_{\omega} \times \nu_{\omega}) \to (Z,\zeta_{\omega})$ and $\varphi : (Z,\zeta_{\omega}) \to (X,\nu_{\omega})$ (that these maps exist for $\mu^{\mathbb{N}}$-almost every $\omega$ is \cite{FG}).

Let $(\zeta_{\omega})_{x}$ be the disintegration of $\zeta_{\omega}$ over $\nu_{\omega}$.  Since $\beta_{\omega} = \delta_{b(\omega)}$ where $b : G^{\mathbb{N}} \to B$ is the boundary map, we then have $\pi : \delta_{b} \times \delta_{x} \to (\zeta_{\omega})_{x}$ so $(\zeta_{\omega})_{x} = \delta_{\pi(b,x)}$ almost everywhere.

Thus, for $\mu^{\mathbb{N}}$-almost every $\omega$ and $\nu_{\omega}$-almost every $x$ we have $(\zeta_{\omega})_{x} = \delta_{\pi(b(\omega),x)}$.  The same would hold for $\pi^{\prime}$ though, so we conclude that $\pi(b(\omega),x) = \pi^{\prime}(b(\omega),x)$ for $\mu^{\mathbb{N}}$-almost every $\omega$ and $\nu_{\omega}$-almost every $x$.  But this simply means that $\pi(b,x) = \pi^{\prime}(b,x)$ for $\beta \curlyvee \nu$-almost every $(b,x)$.
\end{proof}

\begin{theorem}\label{T:iftdense}
Let $\Gamma < G$ be a lattice in a locally compact second countable group and $\Lambda < G$ a dense commensurator.

Let $\Lambda \actson (X,\nu)$ be an ergodic $\Lambda$-space such that the restriction of the action to $\Gamma$ is stationary.

Let $(B,\beta)$ be the Poisson boundary of $G$.  Let $(Z,\zeta)$ be a $(\Gamma,\mu)$-space with $\Gamma$-maps
\[
\pi : (B,\beta) \curlyvee (X,\nu) \to (Z,\zeta) \quad\quad\quad\quad \varphi: (Z,\zeta) \to (X,\nu)
\]
that compose to the projection map.

Assume that $(Z,\zeta)$ is orbital over $(X,\nu)$: for almost every $z$ and all $\gamma \in \Gamma$, if $\gamma \varphi(z) = \varphi(z)$ then $\gamma z = z$.

For each $\lambda \in \Lambda$, we have $\pi(\lambda(b,x)) = \pi(b,x)$ for $\beta\curlyvee\nu$-almost every $(b,x)$ such that $\lambda x = x$.
\end{theorem}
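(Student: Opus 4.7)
The plan is to apply the uniqueness of boundary factor maps (Theorem \ref{T:uniquemap}) to a twisted copy of $\pi$. Fix $\lambda\in\Lambda$; the claim is vacuous if $\nu(\mathrm{Fix}(\lambda))=0$, so assume otherwise, and set $\Gamma' = \Gamma\cap\lambda^{-1}\Gamma\lambda$, which is of finite index in $\Gamma$ by the commensurator property.

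Define the twisted map $\pi^{\lambda}(b,x) := \pi(\lambda b,\lambda x)$. A direct calculation using $\Gamma$-equivariance of $\pi$ and the fact that $\lambda\gamma\lambda^{-1}\in\Gamma$ for $\gamma\in\Gamma'$ shows that $\pi^{\lambda}$ is $\Gamma'$-equivariant from $(B,\beta)\curlyvee(X,\nu)$ to $(Z,\zeta)$ equipped with the conjugated action $\gamma\cdot_{\lambda}z := (\lambda\gamma\lambda^{-1})z$, and that $\varphi\circ\pi^{\lambda} = \lambda\cdot\mathrm{proj}_{X}$, which restricts to the ordinary projection on $B\times\mathrm{Fix}(\lambda)$. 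In particular, for $x\in\mathrm{Fix}(\lambda)$ both $\pi(b,x)$ and $\pi^{\lambda}(b,x)$ lie in the fiber $\varphi^{-1}(x)$.

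The orbital hypothesis is the hinge: $\stab_{\Gamma}(x)$ acts trivially on $\varphi^{-1}(x)$, and since $\lambda$ normalizes $\stab_{\Gamma}(x)$ whenever $\lambda x = x$, the conjugated and the original $\Gamma'$-actions on the fiber over $x\in\mathrm{Fix}(\lambda)$ agree. The strategy is to leverage this to repackage $\pi$ and $\pi^{\lambda}$ as two $\Gamma'$-equivariant factorizations of $B\curlyvee X$ over $X$ whose compositions with $\varphi$ agree on a $\Gamma'$-invariant thickening of $\mathrm{Fix}(\lambda)$ up to null sets. A $\Gamma'$-variant of Theorem \ref{T:uniquemap}, obtained by passing to the $\Gamma'$-ergodic decomposition of $B\curlyvee X$ and noting that the point-mass conditional-measure argument still applies because the $G$-boundary $b(\omega)$ is determined by the full sample path, then yields $\pi = \pi^{\lambda}$ on $B\times\mathrm{Fix}(\lambda)$, which unpacks to the claim.

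The principal obstacle is the bookkeeping around the twist: since $Z$ is only a $\Gamma$-space, the conjugated $\Gamma'$-action is globally distinct from the original, and $\mathrm{Fix}(\lambda)$ is not $\Gamma$-invariant, so the two factorizations cannot be compared over the whole space in one stroke. My expected fix is either to work component-by-component on the $\Gamma'$-ergodic decomposition, where the two $\Gamma'$-actions on $Z$ restrict to the same action on the fibers that matter, or to invoke the relative contractivity of $B\curlyvee X$ over $X$ together with the CS14-type extension mechanism used in Theorem \ref{T:projectingcomm} to lift the comparison to a setting in which $Z$ is replaced by a genuine $\Lambda$-space and the uniqueness theorem applies verbatim.
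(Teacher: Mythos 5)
Your high-level strategy --- compare $\pi$ with a $\lambda$-twisted copy and invoke the uniqueness of factorizations of $(B,\beta)\curlyvee(X,\nu)$ over $(X,\nu)$ (Theorem \ref{T:uniquemap}) --- is exactly the skeleton of the paper's argument. But the step you flag as ``the principal obstacle'' and leave to two vague ``expected fixes'' is in fact the entire content of the proof, and neither of your proposed fixes resolves it. The difficulty is that your twisted map $\pi^{\lambda}(b,x)=\pi(\lambda b,\lambda x)$ satisfies $\varphi\circ\pi^{\lambda}=\lambda\cdot\mathrm{proj}_{X}$, so it is \emph{not} a factorization of $B\curlyvee X$ over $(X,\nu)$, and the uniqueness theorem simply does not see it. Restricting to $B\times\mathrm{Fix}(\lambda)$ does not help: $\mathrm{Fix}(\lambda)$ is not $\Gamma'$-invariant (for $\gamma_{0}=\lambda^{-1}\gamma\lambda\in\Gamma'$ and $\lambda x=x$ one gets $\lambda\gamma_{0}x=\gamma x$, not $\gamma_{0}x$), so there is no $\Gamma'$-system on which to run a uniqueness argument, and passing to $\Gamma'$-ergodic components does not manufacture one. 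Your parenthetical claim that $\lambda$ ``normalizes $\stab_{\Gamma}(x)$'' when $\lambda x=x$ is also not right as stated: $\lambda\stab_{\Gamma}(x)\lambda^{-1}=\stab_{\lambda\Gamma\lambda^{-1}}(x)$, which need not lie in $\Gamma$.

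The missing idea is a correction that converts the twist into a genuine second factorization \emph{over the same projection}. Work on the set $E=\{x:\lambda x\in\Gamma x\}$, which (unlike $\mathrm{Fix}(\lambda)$) \emph{is} $\Gamma_{0}$-invariant for $\Gamma_{0}=\Gamma\cap\lambda^{-1}\Gamma\lambda$, and define $\theta_{\lambda}:\varphi^{-1}(E)\to Z$ by $\theta_{\lambda}(z)=\gamma z$ for any $\gamma\in\Gamma$ with $\gamma\varphi(z)=\lambda\varphi(z)$; the orbital hypothesis is what makes this well defined and invertible. Setting $\pi_{\lambda}(b,x)=\theta_{\lambda}^{-1}\pi(\lambda(b,x))$ on $p^{-1}(E)$ and $\pi_{\lambda}=\pi$ elsewhere produces a $\Gamma_{0}$-map with $\varphi\circ\pi_{\lambda}=p$ honestly, everywhere; only then does (a $\Gamma_{0}$-version of) Theorem \ref{T:uniquemap} give $\pi_{\lambda}=\pi$ a.e., whence $\pi(\lambda(b,x))=\theta_{\lambda}\pi(b,x)$ on $p^{-1}(E)$, and on $\mathrm{Fix}(\lambda)$ orbitality forces $\theta_{\lambda}=\mathrm{id}$ on the fiber, yielding the claim. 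Without constructing $\theta_{\lambda}$ and verifying its well-definedness, invertibility, and the $\Gamma_{0}$-equivariance of $\pi_{\lambda}$ (which requires the identity $\theta_{\lambda}(\gamma_{0}z)=\lambda\gamma_{0}\lambda^{-1}\,\theta_{\lambda}(z)$), your argument has no bridge from the twisted map to the uniqueness theorem, so the proof is incomplete at its decisive step.
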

\begin{proof}
We remark that what follows is near identical to the proof of Theorem 4.19 in \cite{CP17} but as the results there are stated for measure-preserving $(X,\nu)$, we cannot cite them directly.

Fix $\lambda \in \Lambda$.  Let $E = \{ x \in X : \lambda x \in \Gamma x \}$.  Define the map $\theta_{\lambda} : \varphi^{-1}(E) \to Z$ by choosing $\gamma \in \Gamma$ so that $\lambda \varphi(z) = \gamma \varphi(z)$ then define $\theta_{\lambda}(z) = \gamma z$ (which is well-defined since $Z$ is orbital).

\emph{Claim}: $\theta_{\lambda}(\varphi^{-1}(E)) = \varphi^{-1}(\lambda E)$

For $z \in \theta_{\lambda}(\varphi^{-1}(E))$ write $z = \theta_{\lambda}(w)$ for $w \in \varphi^{-1}(E)$ so that $\lambda \varphi(w) = \gamma \varphi(w)$ for some $\gamma$.  Then $z = \gamma w$ by definition of $\theta_{\lambda}$ so $\varphi(z) = \varphi(\gamma w) = \gamma \varphi(w) = \lambda \varphi(w) \in \lambda E$.

For $z \in \varphi^{-1}(\lambda E)$ write $\varphi(z) = \lambda x$ for $x \in E$ and then there exists $\gamma \in \Gamma$ with $\varphi(z) = \lambda x = \gamma x$.  As $\varphi(\gamma^{-1}z) = \gamma^{-1}\varphi(z) = x \in E$ and $\lambda x = \gamma x$ then $\theta_{\lambda}(\gamma^{-1}z) = \gamma (\gamma^{-1}z) = z$.  So $z = \theta_{\lambda}(\gamma^{-1}z) \in \theta_{\lambda}(\varphi^{-1}(E))$.

\emph{Claim}:  Set $\Gamma_{0} = \Gamma \cap \lambda^{-1}\Gamma\lambda$ which is a lattice in $G$.  The set $E$ is $\Gamma_{0}$-invariant.

That $\Gamma_{0}$ is a lattice is immediate from the fact that $\Lambda$ commensurates $\Gamma$ meaning $\Gamma_{0}$ is finite index in $\Gamma$.  For $\gamma_{0} \in \Gamma_{0}$ and $x \in E$, writing $\gamma_{0} = \lambda^{-1}\gamma\lambda$ for $\gamma \in \Gamma$ we have $\lambda \gamma_{0}x = \lambda\lambda^{-1}\gamma\lambda x = \gamma \lambda x \in \gamma \Gamma x = \Gamma x$ meaning that if $\lambda x \in \Gamma x$ then $\lambda \gamma_{0} x \in \Gamma x$ and so $E$ is $\Gamma_{0}$-invariant.

\emph{Claim}: There exists $\theta_{\lambda}^{-1} : \theta_{\lambda}(\varphi^{-1}(E)) \to E$ such that $\theta_{\lambda}^{-1}\theta_{\lambda} = \mathrm{id}$ on $\varphi^{-1}(E)$ and $\theta_{\lambda}\theta_{\lambda}^{-1} = \mathrm{id}$ on $\theta_{\lambda}(\varphi^{-1}(E))$

Let $w \in \theta_{\lambda}(\varphi^{-1}(E))$.  Then $w = \theta_{\lambda}(z)$ for some $z \in \varphi^{-1}(E)$ so $w = \gamma z$ for some $\gamma \in \Gamma$ such that $\lambda \varphi(z) = \gamma \varphi(z)$.  Note that if $\gamma,\gamma^{\prime} \in \Gamma$ are both such that $\lambda \varphi(z) = \gamma \varphi(z) = \gamma^{\prime} \varphi(z)$ then $\gamma^{-1}\gamma^{\prime} \varphi(z) = \varphi(z)$ so as $Z$ is orbital then $\gamma^{-1}\gamma^{\prime} z = z$.  Define $\theta_{\lambda}^{-1}(w) = \gamma^{-1}w$.  This is then well-defined since $(\gamma^{\prime})^{-1}w = (\gamma^{\prime})^{-1}\gamma \gamma^{-1}w = (\gamma^{\prime})^{-1}\gamma z = z = \gamma^{-1}w$ because $\gamma^{-1}\gamma^{\prime} z = z$.  Then $\theta_{\lambda}^{-1}(\theta_{\lambda}(z)) = \theta_{\lambda}^{-1}(w) = z$ and $\theta_{\lambda}(\theta_{\lambda}^{-1}(w)) = \theta_{\lambda}(z) = w$ hence the proof is complete (since $\theta_{\lambda}$ maps onto its image).

\emph{The new maps}: Define the map $\pi_{\lambda} : B \times X \to Z$ as follows: for $(b,x) \in p^{-1}(E)$ set $\pi_{\lambda}(b,x) = \theta_{\lambda}^{-1}(\pi(\lambda (b,x)))$ and for $y \notin p^{-1}(E)$ set $\pi_{\lambda}(b,x) = \pi(b,x)$.  Likewise define the map $\varphi_{\lambda} : Z \to X$ by $\varphi_{\lambda}(z) = \lambda^{-1}\varphi(\theta_{\lambda} (z))$ for $z$ such that $\varphi(z) \in E$ and $\varphi_{\lambda}(z) = \varphi(z)$ for $z$ such that $\varphi(z) \notin E$.

\emph{Claim}:  $\varphi_{\lambda} \circ \pi_{\lambda} = \varphi \circ \pi = p$ is the projection map and both $\pi_{\lambda}$ and $\varphi_{\lambda}$ are $\Gamma_{0}$-equivariant.

Note that in fact $\varphi_{\lambda} = \varphi$ since for $z \in \varphi^{-1}(E)$ and $\gamma \in \Gamma$ such that $\lambda\varphi(z) = \gamma\varphi(z)$ we have that $\lambda^{-1} \varphi(\gamma z) = \lambda^{-1}\gamma \varphi(z) = \varphi(z)$ but we will find it helpful to distinguish these maps since the measures $\pi_{*}\eta$ and $(\pi_{\lambda})_{*}\eta$ may be distinct and we will be treating $\varphi_{\lambda}$ as a map $(Z,(\pi_{\lambda})_{*}\eta) \to (X,\nu)$ and $\varphi$ as a map $(Z,\pi_{*}\eta) \to (X,\nu)$.

Now for $(b,x)$ such that $p(b,x) \in E$, observe that
\[
\varphi_{\lambda}(\pi_{\lambda}(b,x)) = \lambda^{-1}\varphi(\theta_{\lambda} \theta_{\lambda}^{-1} \pi(\lambda (b,x)))
= \lambda^{-1} \varphi(\pi(\lambda (b,x))) = \lambda^{-1} p(\lambda (b,x)) = p(b,x)
\]
since $p$ is $\Lambda$-equivariant.  Clearly for $(b,x)$ such that $p(b,x) \notin E$ we have $\varphi_{\lambda}(\pi_{\lambda}(b,x)) = \varphi_{\lambda}(\pi(b,x)) = \varphi(\pi(b,x)) = p(bmx)$.  Hence $\varphi_{\lambda} \circ \pi_{\lambda} = p$.

Observe that for $\gamma_{0} \in \Gamma_{0}$, writing $\gamma_{0} = \lambda^{-1}\gamma\lambda$ for some $\gamma \in \Gamma$, we have that for $(b,x)$ such that $p(b,x) \in E$, also $p(\gamma_{0}(b,x)) = \gamma_{0} p(b,x) \in E$ since $E$ is $\Gamma_{0}$-invariant, so
\begin{align*}
\pi_{\lambda}(\gamma_{0}(b,x)) &= \theta_{\lambda}^{-1} \pi(\lambda \gamma_{0}(b,x))
= \theta_{\lambda}^{-1} \pi(\gamma \lambda (b,x))
= \theta_{\lambda}^{-1} \gamma \pi(\lambda (b,x)) \\
&= \theta_{\lambda}^{-1} \gamma \theta_{\lambda} \theta_{\lambda}^{-1} \pi(\lambda (b,x))
= \theta_{\lambda}^{-1} \gamma \theta_{\lambda} \pi_{\lambda}(b,x).
\end{align*}
Now observe that for $z$ such that $\varphi(z) \in E$ (which includes $\pi_{\lambda}(b,x)$ for $p(b,x) \in E$), write $\gamma^{\prime} \in \Gamma$ such that $\theta_{\lambda} z = \gamma^{\prime}z$ and observe that then $\lambda \varphi(z) = \gamma^{\prime} \varphi(z)$ and so
\[
\gamma \gamma^{\prime} \gamma_{0}^{-1} \varphi(\gamma_{0} z)
= \gamma \gamma^{\prime} \varphi(z) = \gamma \lambda \varphi(z) = \lambda \gamma_{0} \varphi(z)
= \lambda \varphi(\gamma_{0} z)
\]
which in turn means that
\[
\theta_{\lambda} (\gamma_{0} z) = \gamma \gamma^{\prime} \gamma_{0}^{-1} (\gamma_{0} z) = \gamma \gamma^{\prime} z = \gamma \theta_{\lambda} (z)
\]
and therefore
\[
\pi_{\lambda}(\gamma_{0}(b,x)) =  \theta_{\lambda}^{-1} \gamma \theta_{\lambda} \pi_{\lambda}(b,x) 
= \theta_{\lambda}^{-1}\theta_{\lambda}\gamma_{0}\pi_{\lambda}(b,x) = \gamma_{0} \pi_{\lambda}(b,x).
\]
Of course, for $(b,x)$ such that $p(b,x) \notin E$ we have that $\gamma_{0} (b,x) \notin E$ and so
\[
\pi_{\lambda}(\gamma_{0} (b,x)) = \pi(\gamma_{0} (b,x)) = \gamma_{0} \pi(b,x) = \gamma_{0} \pi_{\lambda}(b,x)
\]
and we conclude that $\pi_{\lambda}$ is $\Gamma_{0}$-equivariant.  Note that $\varphi_{\lambda} = \varphi$ so $\varphi_{\lambda}$ is likewise $\Gamma_{0}$-equivariant.

\emph{Completing the proof}:  The claims together give us $\Gamma_{0}$-maps of $\Gamma_{0}$-spaces $\pi : (B,\beta) \curlyvee (X,\nu) \to (Z,\zeta)$ and $\varphi : (Z,\zeta) \to (X,\nu)$ and $\pi_{\lambda} : (B,\beta) \curlyvee (X,\nu) \to (Z,\zeta_{\lambda})$ and $\varphi_{\lambda} : (Z,\zeta_{\lambda}) \to (X,\nu)$ where $\zeta_{\lambda} = \pi_{*}(\beta \curlyvee \nu)$ which is in the same measure class as $\zeta$.

Theorem \ref{T:uniquemap} then says that $\zeta_{\lambda} = \zeta$ and $\pi = \pi_{\lambda}$ almost everywhere.  For almost every $(b,x)$, we then have $\pi(\lambda(b,x)) = \theta_{\lambda}\theta_{\lambda^{-1}}\pi(\lambda (b,x)) = \theta_{\lambda}\pi_{\lambda}(b,x) = \theta_{\lambda}\pi(b,x)$.

In particular, when $\lambda x = x$ meaning $\theta_{\lambda}\pi(b,x) = \pi(b,x)$ (as $Z$ is orbital), we then have $\pi(\lambda(b,x)) = \pi(b,x)$.
\end{proof}}{\relax}

\vspace{-2em}
\dbibliography{references}

\end{document}